
\documentclass[12pt]{amsart}
\usepackage{amssymb}

\textwidth 6.5truein
\textheight 8.67truein
\oddsidemargin 0truein
\evensidemargin 0truein
\topmargin 0truein

\let\Bbb\mathbb

\def\>{\relax\ifmmode\mskip.666667\thinmuskip\relax\else\kern.111111em\fi}
\def\<{\relax\ifmmode\mskip-.333333\thinmuskip\relax\else\kern-.0555556em\fi}
\def\vsk#1>{\vskip#1\baselineskip}
\def\vv#1>{\vadjust{\vsk#1>}\ignorespaces}
\def\vvn#1>{\vadjust{\nobreak\vsk#1>\nobreak}\ignorespaces}
\def\vvgood{\vadjust{\penalty-500}} \let\alb\allowbreak
\def\fratop{\genfrac{}{}{0pt}1}
\def\satop#1#2{\fratop{\scriptstyle#1}{\scriptstyle#2}}
  \let\ssize\scriptstyle
\let\sssize\scriptscriptstyle \def\tfrac{\textstyle\frac}

\let\Medskip\medskip
\def\medskip{\par\Medskip}
\let\Bigskip\bigskip
\def\bigskip{\par\Bigskip}

\let\Maketitle\maketitle
\def\maketitle{\Maketitle\thispagestyle{empty}\let\maketitle\empty}

\newtheorem{thm}{Theorem}[section]
\newtheorem{cor}[thm]{Corollary}
\newtheorem{lem}[thm]{Lemma}
\newtheorem{prop}[thm]{Proposition}

\numberwithin{equation}{section}

\theoremstyle{definition}
\newtheorem*{rem}{Remark}

\let\mc\mathcal
\let\nc\newcommand

\let\dl\delta
\let\Dl\Delta

\let\epe\epsilon

\let\la\lambda

\let\pho\phi
\let\phi\varphi
\let\si\sigma
\let\Si\Sigma
\let\Sig\varSigma

\let\thi\vartheta

\let\om\omega
\let\Om\Omega

\let\der\partial

\let\ox\otimes

\let\ge\geqslant
\let\geq\geqslant
\let\le\leqslant
\let\leq\leqslant

\let\on\operatorname
\let\bi\bibitem
\let\bs\boldsymbol

\def\C{{\mathbb C}}
\def\Z{{\mathbb Z}}

\def\Ac{{\mc A}}
\def\B{{\mc B}}
\def\F{{\mc F}}
\def\Hc{{\mc H}}

\def\Oc{{\mc O}}

\def\V{{\mc V}}

\def\+#1{^{\{#1\}}}
\def\lsym#1{#1\alb\dots\relax#1\alb}
\def\lc{\lsym,}
\def\lox{\lsym\ox}

\def\ch{\on{ch}}

\def\End{\on{End}}

\def\qdet{\on{qdet}}
\def\rdet{\on{rdet}}

\def\tbigoplus{\mathop{\textstyle{\bigoplus}}\limits}

\def\Wr{\on{Wr}}

\def\ii{i,\<\>i}
\def\ij{i,\<\>j}
\def\ik{i,\<\>k}
\def\il{i,\<\>l}
\def\ji{j,\<\>i}

\def\jk{j,\<\>k}
\def\kj{k,\<\>j}
\def\kl{k,\<\>l}

\def\gln{\mathfrak{gl}_N}
\def\glN{\mathfrak{gl}_N}

\def\Ugln{U(\gln)}

\def\Yn{Y\<(\gln)}

\def\beq{\begin{equation}}
\def\eeq{\end{equation}}
\def\be{\begin{equation*}}
\def\ee{\end{equation*}}

\nc{\bea}{\begin{eqnarray*}}
\nc{\eea}{\end{eqnarray*}}
\nc{\bean}{\begin{eqnarray}}
\nc{\eean}{\end{eqnarray}}
\nc{\Ref}[1]{{\rm(\ref{#1})}}

\def\g{{\mathfrak g}}
\def\h{{\mathfrak h}}

\nc{\Il}{{\mc I_{\bs\la}}}
\nc{\bla}{{\bs\la}}
\nc{\Fla}{\F_\bla}
\nc{\tfl}{{T^*\Fla}}
\nc{\GL}{{GL_n(\C)}}
\nc{\GLC}{{GL_n(\C)\times\C^*}}

\def\Di{{\tfrac1D}}

\def\DVe{\Di\V^A}
\def\DLe{(\DVe)_\bla}
\def\DLs{\DLe^\sing}

\def\zzz{z_1\lc z_n}

\def\Czh{\C[\zzz]}

\def\Czs{\C[\zzz]^{\<\>S}}

\let\sd s 

\def\tti{\tilde t}
\def\ttbi{\tilde{\<\bs t\<\>}}
\def\vvbi{\tilde{\<\bs v\<\>}}

\def\ib{\bs i}
\def\jb{\bs j}

\def\iib{\ib,\<\>\ib}
\def\ijb{\ib,\<\>\jb}

\def\zb{\bs z}
\def\zzi{z_1\lc z_i,z_{i+1}\lc z_n}
\def\zzii{z_1\lc z_{i+1},z_i\lc z_n}

\def\Bck{\B^{\<\>\kk}}

\def\Hck{\Hc^{\<\>\kk}}

\def\ddk_#1{\kk_{#1}\<\>\frac\der{\der\<\>\kk_{#1}}}

\def\vone{v_1\<\lox v_1} \def\voxn{v_1^{\otimes n}}

\def\bul{\mathbin{\raise.2ex\hbox{$\sssize\bullet$}}}
\def\intt{\mathchoice
{\mathop{\raise.2ex\rlap{$\,\,\ssize\backslash$}{\intop}}\nolimits}
{\mathop{\raise.3ex\rlap{$\,\sssize\backslash$}{\intop}}\nolimits}
{\mathop{\raise.1ex\rlap{$\sssize\>\backslash$}{\intop}}\nolimits}
{\mathop{\rlap{$\sssize\<\>\backslash$}{\intop}}\nolimits}}

\let\kk q 
\let\cc c
\def\kkk{\kk_1\lc\kk_N}

\let\Ko K

\def\GZ/{Gelfand-Zetlin}
\def\KZ/{{\slshape KZ\/}}
\def\qKZ/{{\slshape qKZ\/}}
\def\XXX/{{\slshape XXX\/}}

\def\zz{{\bs z}}
\def\aa{{\bs a}}
\def\bb{{\bs b}}
\def\TT{{\bs t}}
\def\qq{{\bs q}}

\nc{\Xe}{\mathcal X}
\nc{\Tee}{\mathcal S}
\nc{\D}{{\mathcal {D}}}
\nc{\Te}{\mathfrak{T}}
\def\Bq{B^{\<\>\qq}}
\def\Bql{B^{\<\>\qq,\<\>\bla}}
\def\Bcq{\B^{\<\>\qq}}
\def\Bo{\B^{\<\>\qq=\bs1}}
\def\Cl{C^{\<\>\bla}}
\def\Fql{F^{\>\qq,\<\>\bla}}
\def\Oq{\Oc^{\<\>\qq}}
\def\Oql{\Oq_\bla}
\def\Ol{{\Oc_\bla}}
\def\Omq{\Om^{\<\>\qq}}
\def\Omql{\Omq_\bla}
\def\Oml{\Om_\bla}
\def\DOl{\D^{\>\Ol}}
\def\kmin{k_{\<\>\textrm{min}}}

\def\yy{{\<\>\raise.22ex\hbox{$\scriptscriptstyle Y\!\!\!$}}}
\def\nnla{{n,N\<,\<\>\bs\la}}

\def\O{{\mc O}}
\def\lba{{\bs\la\>,\aa}}

\def\sing{{\textit{sing}}}
\def\Gaud{{\textit{Gaudin}}}
\nc{\Se}{\mathfrak S}
\nc{\Dt}{\D_{\>\ttbi}(u,\tau)}
\nc{\Sym}{{\on{Sym}}}

\begin{document}

\hrule width0pt
\vsk->

\title[Spaces of quasi-exponentials and representations of
the Yangian $Y(\gln)$]
{Spaces of quasi-exponentials and representations of\\[2pt]
the Yangian $Y(\gln)$ }

\author[E.\,Mukhin, V.\,Tarasov, and A.\,Varchenko]
{E.\,Mukhin$\>^{*,1}$, V.\,Tarasov$\>^{\star,*,2}$,
and A.\,Varchenko$\>^{\diamond,3}$}

\thanks{${\>\kern-\parindent}^1\,$Supported in part by NSF grant DMS-0900984}
\thanks{${\>\kern-\parindent}^2\,$Supported in part by NSF grant DMS-0901616}

\thanks{${\>\kern-\parindent}^3\,$Supported in part by NSF grant DMS-1101508}

\maketitle

\begin{center}
{\it $^\star\<$Department of Mathematical Sciences
Indiana University\,--\>Purdue University Indianapolis\\
402 North Blackford St, Indianapolis, IN 46202-3216, USA\/}

\medskip
{\it $^*\<$St.\,Petersburg Branch of Steklov Mathematical Institute\\
Fontanka 27, St.\,Petersburg, 191023, Russia\/}

\medskip
{\it $^\diamond\<$Department of Mathematics, University of North Carolina
at Chapel Hill\\ Chapel Hill, NC 27599-3250, USA\/}
\end{center}

\medskip

\begin{abstract}
We consider a tensor product $V(\bb)= \otimes_{i=1}^n\C^N(b_i)$ of
the Yangian $\Yn$ evaluation vector representations. We consider the action of
the commutative Bethe subalgebra $\Bcq\subset\Yn$ on a $\gln$-weight subspace
$V(\bb)_\bla\subset V(\bb)$ of weight $\bla$. Here the Bethe algebra
depends on the parameters $\qq=(q_1\lc q_N)$. We identify the
$\Bcq$-module $V(\bb)_\bla$ with the regular representation of the algebra
of functions on a fiber of a suitable discrete Wronski map.
If $\qq=(1\lc 1)$, we study the action of $\Bo$ on a space
$V(\bb)^\sing_\bla$ of singular vectors of a certain weight. Again, we
identify the $\Bo$-module $V(\bb)^\sing_\bla$ with the regular
representation of the algebra of functions on a fiber of another suitable
discrete Wronski map.

These results we announced earlier in relation with a description of
the quantum equivariant cohomology of the cotangent bundle of a partial flag
variety and a description of commutative subalgebras of the group algebra of
a symmetric group.
\end{abstract}

\section{Introduction}
A Bethe algebra of a quantum integrable model is a commutative algebra of
linear operators (Hamiltonians) acting on the vector space of states of the model.
An interesting problem is to describe the Bethe algebra as the algebra of
functions on a suitable scheme. Such a description can be considered as
an instance of the geometric Langlands correspondence, see for example \cite{MTV4}.
The $\gln$ \XXX/ model is an example of a quantum integrable model. The Bethe
algebra \,$\Bck\<$ of the \XXX/ model is a commutative subalgebra of the
Yangian $\Yn$. The algebra \,$\Bck\<$ depends on the parameters
$\kk=(\kkk)\in\C^N$. Having a $Y(\gln)$-module $M$, one obtains
the commutative subalgebra $\Bck(M)\subset\End(M)$ as the image of \,$\Bck\<$.
The geometric interpretation of the algebra $\Bck(M)$ as the algebra of
functions on a scheme leads to interesting objects, see for example,
\cite{GRTV}.

In this paper, we consider (among other Yangian modules) a tensor product $V(\bb)= \otimes_{i=1}^n\C^N(b_i)$ of
the Yangian $\Yn$ evaluation vector representations. We consider the action of
the Bethe subalgebra $\Bcq\subset\Yn$ on a $\gln$-weight subspace
$V(\bb)_\bla\subset V(\bb)$ of weight $\bla$.
We identify the
$\Bcq$-module $V(\bb)_\bla$ with the regular representation of the algebra
of functions on a fiber of a suitable discrete Wronski map. If
$\qq=(1\lc 1)$, we study the action of $\Bo$ on a space
$V(\bb)^\sing_\bla$ of singular vectors of a certain weight. Again, we
identify the $\Bo$-module $V(\bb)^\sing_\bla$ with the regular
representation of the algebra of functions on a fiber of another suitable
discrete Wronski map.

\vsk.2>
These results are parallel to the analogous results of \cite{MTV3, MTV4},
where we study the corresponding $\gln[t]$-modules instead of the Yangian
$\Yn$-modules.

\vsk.2>
We used the results of this paper earlier in \cite[Theorems 6.3-6.5]{GRTV}
in relation with a description of the quantum equivariant cohomology of
the cotangent bundle of a partial flag variety and in \cite[Theorem 7.3]{MTV6}
in relation with a description of commutative subalgebras of the group algebra
of a symmetric group. More details are given in remarks after
Theorem \ref{1 thm} and at the end of Section \ref{spoly}.

\vsk.2>
In Section \ref{alg sec}, we consider the space
$\V=(\C^N)^{\otimes n}\otimes\C[\zzz]$, an action on $\V$ of the
symmetric group $S_n$, the subspace $\V^S\subset \V$ of the $S_n$-invariants,
the \,$\gln\!$ weight subspaces $(\V^S)_\bla\subset\V^S$ and the subspaces
$(\V^S)^\sing_\bla\subset(\V^S)_\bla$ of singular vectors.

\vsk.2>
In Section \ref{Yang modules}, we introduce an action of the Yangian $\Yn$
on $\V^S$. In Section \ref{sec Bethe algebra}, we introduce Bethe subalgebras
$\Bcq\subset\Yn$. The induced $\Bcq$-action on $\V^S$ preserves the weight
subspaces $(\V^S)_\bla$. If $\qq=(1\lc 1)$, then the $\Bo$-action
on $\V^S$ preserves the subspaces $(\V^S)^\sing_\bla$ of singular vectors.

\vsk.2>
In Section \ref{sqe}, we introduce a discrete Wronski map on collections
of quasi-exponentials. Theorem \ref{1 thm} describes the \,$\Bcq$-module
$(\V^S)_\bla$ for \,$\qq$ with distinct coordinates in terms of the discrete
Wronski map. In Section \ref{spoly} we define a discrete Wronski map on
collections of polynomials. Theorem \ref{2 thm} describes the \,$\Bo$-module
$(\V^S)^\sing_\bla$ in terms of the second Wronski map. Corollaries \ref{1 cor}
and \ref{2 cor} give an application of Theorems \ref{1 thm} and \ref{2 thm}
to a description of the Bethe algebra action on a tensor product of evaluation
vector representations.

\vsk.2>
Proofs of the theorems are based of the Bethe ansatz. We prove the
corresponding Bethe ansatz statements in Sections \ref{Sec BA q ne 1}
and \ref{Sec BA q 1}, and prove Theorems \ref{1 thm} and \ref{2 thm}
in Section \ref{sec proofs}.

\vsk.2>
In Section \ref{alg sec A}, we consider the $S_n$-skew-invariant part
\>$\V^A\subset \V$ and the space \,$\DVe$ of $S_n$-invariant rational
functions. Theorems \ref{3 thm} and \ref{4 thm} describe the \,$\Bcq$-module
$\DLe$ \,for \,$\qq$ with distinct coordinates and the \,$\Bo$-module \,$\DLs$
in terms of the corresponding Wronski maps.


\section{Space \,$\V^S$}
\label{alg sec}

\subsection{Lie algebra $\gln$}
\label{sec gln}

Let $e_{\ij}$, $i,j=1\lc N$, be the standard generators of the Lie algebra
$\gln$ satisfying the relations
$[e_{\ij},e_{\kl}]=\dl_{\jk}e_{\il}-\dl_{\il}e_{\kj}$. We denote by
$\h\subset\gln$ the subalgebra generated by $e_{\ii},\,i=1\lc N$. For a Lie
algebra $\g\,$, we denote by $U(\g)$ the universal enveloping algebra of $\g$.

\vsk.2>
A vector $v$ of a $\gln$-module $M$ has weight
$\bla=(\la_1\lc\la_N)\in\C^N$ if $e_{\ii}\>v=\la_i\>v$ for $i=1\lc N$.
A vector $v$ is singular if $e_{\ij}v=0$ for $1\le i<j\le N$.

\vsk.2>
We denote by $M_{\bs\la}$ the subspace of $M$ of weight $\bs\la$,
by $M^\sing$ the subspace of $M$ of all singular vectors and by
$M^\sing_{\bs\la}$ the subspace of $M$ of all singular vectors
of weight $\bs\la$.

\vsk.2>
A sequence of integers $\bs\la=(\la_1\lc\la_N)$ such that
$\la_1\ge\la_2\ge\dots\ge\la_N\ge0$ is called a partition with at most $N$
parts. Set $|\bs\la|=\sum_{i=1}^N\la_i$. We say that $\bs\la$ is a partition
of $|\bs\la|$.

\vsk.2>
Let $\C^N$ be the standard vector representation of $\gln$ with basis
$v_1\lc v_N$ such that $e_{\ij}v_k=\dl_{\jk}v_i$ for all $i,j,k$.
A tensor power $V=(\C^N)^{\ox n}$ of the vector representation has a basis
given by the vectors $v_{i_1}\!\lox v_{i_n}$, where
\;$i_j\in\{1\lc N\}$. Every such sequence $(i_1\lc i_n)$ defines
a decomposition $I=(I_1\lc I_N)$ of $\{1\lc n\}$ into disjoint subsets
$I_1\lc I_N$, \,where \;$I_j=\{k\ |\ i_k=j\}$. We denote the basis vector
$v_{i_1}\!\lox v_{i_n}$ by $v_I$.

Let
\vvn-.2>
\be
V\,=\!\bigoplus_{\bla\in\Z^N_{\geq 0},\,|\bla|=n}\!V_\bla
\ee
be the weight decomposition. Denote $\Il$ the set of all indices $I$ with
$|I_j|=\la_j$, \;$j=1,\dots N$. The vectors $\{v_I\ |\ I\in\Il\}$ form a basis of
$V_\bla$.

\subsection{Space $\V^S$}
\label{sec S-actions}

Let $\V$ be the space of polynomials in $\zz=(\zzz)$ with coefficients
in $V=(\C^N)^{\ox n}$:
\vvn-.3>
\be
\V\>=\,V\<\ox_{\C}\C[\zzz]\,.
\vv.3>
\ee
We embed the space \,$V$ into $\V$ \>by sending
\,$v\in V$ to \,$v\otimes 1\in\V$.

Consider the grading on $\C[\zzz]$, $\deg z_i=1$ for
$i=1\lc n$. We define the degree of elements of $\V$ by the rule
$\deg(v\ox p)=\deg p$. We consider the increasing filtration
$F_0\V\subset F_1\V \subset \dots \subset \V$
whose $k$-th subspace consists of elements of degree $\leq k$.
The filtration on $\V$ induces a natural filtration on $\End(\V)$.

Let $P^{(\ij)}$ be the permutation of the $i$-th and $j$-th factors
of $V=(\C^N)^{\ox n}$. Let $s_1\lc s_{n-1}\in S_n$ be the elementary transpositions.
We define an $S_n$-action on \,$V$-valued functions of \>$\zzz$
by the formula:
\begin{align}
\label{Sn+}
s_i: f(\zzz)\,\mapsto\,
\frac{(z_i-z_{i+1})\,P^{(\ii+1)}-1}{z_i-z_{i+1}}\;
& f(\zzii)\,+{}
\\[3pt]
{}+\,\frac{1}{z_i-z_{i+1}}\;&f(\zzi)\,.
\notag
\\[-14pt]
\notag
\end{align}
These formulae induce an \,$S_n$-action on $\V$. The $S_n$-action preserves
the filtration: for any $k$ we have $S_n\times F_k\V \to F_k\V$. We denote
by $\V^S$ the subspace of $S_n$-invariants in $\V$.

\vsk.2>
The group $S_n$ acts on the algebra $\C[\zzz]$ by permuting
the variables. Let $\si_i(\zb)$, $i=1,\alb\dots,n$, be the $s$-th elementary
symmetric polynomial in $\zzz$. The algebra of symmetric polynomials
$\Czs$ is a free polynomial algebra with generators
$\si_1(\zb)\lc\alb\si_n(\zb)$.

\begin{lem}
\label{VSfree}
The space $\V^S$ is a free $\Czs$-module of rank $N^n$.
\end{lem}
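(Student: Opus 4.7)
The plan is to compare the twisted $S_n$-action \eqref{Sn+} with the \emph{standard} diagonal action $\tilde s_i(v\otimes f)(\zzz)=P^{(i,i+1)}v\otimes f(\zzii)$ via the given filtration $F_k\V$, and then appeal to the Chevalley--Shephard--Todd theorem for $S_n$. Three preliminary observations will be used. First, the correction term $(f(\zzi)-f(\zzii))/(z_i-z_{i+1})$ is a polynomial since its numerator vanishes on $z_i=z_{i+1}$; in particular \eqref{Sn+} truly defines an action on $\V$. Second, for $f\in\Czs$ this correction vanishes identically because $f(\zzi)=f(\zzii)$, so multiplication by a symmetric polynomial commutes with each $s_i$; hence $\V^S$ is a $\Czs$-submodule of $\V$, and the statement of the lemma even makes sense. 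Third, the correction term has strictly smaller degree than $f$, so the induced action of $S_n$ on the associated graded $\on{gr}\V$ coincides with the standard diagonal action $\tilde s_i$.

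For that standard diagonal action the statement is classical. The Chevalley--Shephard--Todd theorem for $S_n$ provides an isomorphism $\C[\zzz]\simeq\Czs\otimes_\C\C[S_n]$ of $\Czs[S_n]$-modules, with $S_n$ acting on $\C[S_n]$ by left multiplication. Tensoring with $V$ and taking diagonal $S_n$-invariants one uses the elementary identity $(V\otimes\C[S_n])^{S_n,\,\mathrm{diag}}\simeq V$, via $v\mapsto\sum_{w\in S_n}w\otimes w^{-1}v$. Therefore $(\on{gr}\V)^{S_n,\,\mathrm{std}}\simeq V\otimes_\C\Czs$ is a free $\Czs$-module of rank $N^n=\dim V$.

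Next I would identify $\on{gr}\V^S$ with $(\on{gr}\V)^{S_n,\,\mathrm{std}}$. The inclusion $\on{gr}\V^S\subseteq(\on{gr}\V)^{S_n,\,\mathrm{std}}$ is immediate from observation~3. For the reverse, given a class $\bar x\in F_k\V/F_{k-1}\V$ that is $S_n$-invariant for the standard action, pick any lift $x\in F_k\V$ and form the twisted symmetrizer $\Pi x=\frac1{n!}\sum_{w\in S_n}w\cdot x\in F_k\V^S$; because the twisted and standard actions agree modulo $F_{k-1}\V$, one has $\Pi x\equiv x\equiv\bar x$ in $F_k\V/F_{k-1}\V$. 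Hence $\bar x$ is the image of an element of $F_k\V^S$, giving the reverse inclusion. Consequently $\on{gr}\V^S$ is free of rank $N^n$ over $\Czs$.

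Finally, one lifts this structure back to $\V^S$. Fix a homogeneous $\Czs$-basis $\bar e_1,\dots,\bar e_{N^n}$ of $\on{gr}\V^S$ and pick lifts $e_j\in F_{\deg\bar e_j}\V^S$. A standard filtration argument (linear independence by examining the top-degree part of any putative relation, and spanning by descending induction on the filtration degree) shows that $\{e_j\}$ is a free $\Czs$-basis of $\V^S$, yielding the isomorphism $\V^S\simeq\Czs^{\oplus N^n}$. The main technical point is the comparison of the twisted and standard actions on $\on{gr}\V$; once that identification is secured, the statement reduces to the classical invariant theory of the symmetric group.
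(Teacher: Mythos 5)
Your argument is correct, and it is genuinely different in character from what the paper does: the paper disposes of this lemma by citing Lemma 2.10 of \cite{GRTV}, whereas you give a self-contained proof by passing to the associated graded module, observing that the divided-difference correction in \Ref{Sn+} drops the degree so that the induced action on $\on{gr}\V$ is the standard diagonal one, identifying $\on{gr}\V^S$ with the standard invariants via the twisted symmetrizer, invoking Chevalley--Shephard--Todd to get $\C[\zzz]\simeq\Czs\otimes_\C\C[S_n]$ as $\Czs[S_n]$-modules, and then lifting a homogeneous basis through the filtration. This buys a proof readable without the reference, and along the way it justifies two facts the paper leaves implicit, namely that $\V^S$ is a $\Czs$-submodule at all and where the rank $N^n$ comes from. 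Two small remarks: your explicit intertwiner should be $v\mapsto\sum_{w\in S_n}w\otimes w\>v$ (with $S_n$ acting on $\C[S_n]$ by left multiplication), not $w\otimes w^{-1}v$ --- the isomorphism $(V\otimes\C[S_n])^{S_n}\simeq V$ itself is of course standard and unaffected; and checking that the operators \Ref{Sn+} preserve $\V$ does not by itself show they satisfy the Coxeter relations, so the existence of the $S_n$-action (which your symmetrizer argument uses) should be taken as part of the paper's setup rather than as a consequence of your first observation.
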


\begin{proof} The lemma follows from Lemma 2.10 in \cite{GRTV}.
\end{proof}

The Lie algebra $\gln$ naturally acts on $\V$ preserving the grading and
commuting with the $S_n$-action on $\V$. Therefore, \,$\V^S\<$ is a filtered
$\gln$-module. We consider the $\gln$-weight decomposition
\vvn-.1>
\be
\V^S=\!\tbigoplus_{\fratop{\bla\in\Z^N_{\geq 0}}{|\bla|=n}}\!(\V^S)_\bla\,,
\vv.2>
\ee
as well as the subspaces of singular vectors $(\V^S)^\sing_\bla\!\subset
(\V^S)_\bla$. All of these are filtered free $\Czs$-modules.

\vsk.2>
Let $M$ be a $\Z_{\ge0}$-filtered space with finite-dimensional graded
components $F_kM/F_{k-1}M$.
We call the formal power series in a variable $t$,
\vvn.2>
\be
\on{ch}_M(t)\,=\,\sum_{k=0}^\infty\,(\dim F_kM/F_{k-1}M)\,t^k\,,
\vv.2>
\ee
the graded character of $M$. We set $\,(\<\>t\<\>)_a=\prod_{j=1}^a(1-t^j)\,$.

\begin{lem}
\label{lem on char of VSl}
For $\bs\la\in\Z^N_{\geq 0}$, $|\bla|=n$, we have
\vvn-.2>
\beq
\label{forMula}
\on{ch}_{\>(\V^S)_{\bs\la}}(t)\,=\,
{\prod_{i=1}^N\frac1{(\<\>t\<\>)_{\la_i}}}\;.
\eeq
For a partition $\bla$ of $n$ with at most $N$ parts, we have
\vvn.2>
\beq
\label{forMulaSing}
\on{ch}_{\>(\V^S)_{\bs\la}^\sing}(t)\,=\,
\frac{\prod_{1\le i<j\le N}\,(1-t^{\la_i-\la_j+j-i})}
{\prod_{i=1}^N\,(\<\>t\<\>)_{\la_i+N-i}}\ t^{\,\sum_{i=1}^N{(i-1)\la_i}}.
\eeq
\end{lem}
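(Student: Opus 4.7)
My plan is to relate the twisted $S_n$-action \Ref{Sn+} to the \emph{standard} diagonal $S_n$-action on $V\ox\C[\zz]$ via the $\zz$-filtration, and then to reduce the two character formulas to classical facts about symmetric polynomials and the coinvariant algebra. For a pure tensor $v\ox g\in V\ox\C[\zz]$, the formula \Ref{Sn+} rewrites as
$$
s_i\cdot(v\ox g)\,=\,P^{(\ii+1)}v\ox s_i g\,+\,v\ox\partial_i g\,,
$$
where $s_i g(\zz)=g(\zzii)$ and $\partial_i g=(g-s_i g)/(z_i-z_{i+1})$ is the divided difference, which strictly lowers the $\zz$-degree. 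Hence, with respect to the filtration $F_\bul\V$, the associated graded of the twisted $S_n$-action is $V\ox\C[\zz]$ equipped with the standard diagonal action $P^{(\ii+1)}\ox s_i$. The averaging idempotent is filtration-preserving, so taking $S_n$-invariants commutes with $\on{gr}$; since $\gln$ acts by degree-zero operators commuting with $S_n$, the same holds after restricting to the $\bla$-weight subspace or to singular vectors. It therefore suffices to compute the graded characters of $(V_\bla\ox\C[\zz])^{S_n}$ and $(V^\sing_\bla\ox\C[\zz])^{S_n}$ for the standard diagonal action.

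For \Ref{forMula}, the basis $\{v_I\mid I\in\Il\}$ identifies $V_\bla$ with the permutation module $\C[S_n/\Sla]$ induced from the trivial character of the Young subgroup $\Sla$. Frobenius reciprocity gives $(V_\bla\ox\C[\zz])^{S_n}\cong\C[\zz]^{\>\Sla}$, which is a tensor product, over the $N$ blocks, of symmetric-polynomial rings in $\la_i$ variables; its graded character is $\prod_{i=1}^N 1/(\<\>t\<\>)_{\la_i}$.

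For \Ref{forMulaSing}, Schur--Weyl duality identifies $V^\sing_\bla$, as an $S_n$-module, with the Specht module $M^\bla$. Writing $\C[\zz]\cong\Czsn\ox\mc H$ with $\mc H$ the $S_n$-coinvariant algebra (the graded regular representation of $S_n$), one has
$$
(M^\bla\ox\C[\zz])^{S_n}\,\cong\,\Czsn\ox(M^\bla\ox\mc H)^{S_n}\,,
$$
whose graded dimension is $f^{\>\bla}(t)\big/(\<\>t\<\>)_n$, where $f^{\>\bla}(t)$ is the fake-degree polynomial --- the graded multiplicity of $M^\bla$ in $\mc H$. The classical hook-content formula gives $f^{\>\bla}(t)=t^{\,n(\bla)}\,(\<\>t\<\>)_n\big/\prod_{c\in\bla}(1-t^{h(c)})$ with $n(\bla)=\sum_i(i-1)\la_i$, so $\on{ch}_{(\V^S)^\sing_\bla}(t)=t^{\,n(\bla)}\big/\prod_{c\in\bla}(1-t^{h(c)})$. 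A standard rearrangement of the hook lengths of $\bla$ (padded by zeros to have $N$ parts), namely that the hooks in row $i$ are $\{1,\dots,\la_i+N-i\}\setminus\{\la_i-\la_j+j-i:j>i\}$, yields
$$
\prod_{c\in\bla}(1-t^{h(c)})\,=\,\frac{\prod_{i=1}^N(\<\>t\<\>)_{\la_i+N-i}}{\prod_{1\le i<j\le N}(1-t^{\la_i-\la_j+j-i})}\,,
$$
which converts the previous expression into \Ref{forMulaSing}.

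I expect the main obstacle to be the first step: isolating the top-degree part of the twisted action as the standard diagonal action and confirming that finite-group invariants commute with passing to the associated graded for this specific filtration. Once this reduction is established, both formulas are assembled from standard material --- Frobenius reciprocity, Schur--Weyl duality, the Chevalley decomposition $\C[\zz]\cong\Czsn\ox\mc H$, and the classical $q$-hook formula.
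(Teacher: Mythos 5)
Your proof is correct and follows essentially the same route as the paper: both pass to the associated graded of the filtration, where the twisted $S_n$-action \Ref{Sn+} becomes the standard diagonal action, and then compute the graded characters of $(V_\bla\ox\C[\zz])^{S_n}$ and $(V^\sing_\bla\ox\C[\zz])^{S_n}$ for that standard action. The only difference is that the paper quotes these two character formulas from \cite{MTV2} and \cite{MTV3}, whereas you derive them directly via Frobenius reciprocity, Schur--Weyl duality, the coinvariant-algebra decomposition, and the $q$-hook length formula.
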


\begin{proof}
The $S_n$-action on the graded components $F_k\V_{\bs\la}/F_{k-1}\V_{\bs\la}$
and $F_k\V_{\bs\la}^\sing/F_{k-1}\V_{\bs\la}^\sing$ coincides with
the $S_n$-action considered in \cite{MTV2} and \cite{MTV3}.
Formula \Ref{forMula} follows from \cite[Lemma 2.12]{MTV3}.
Formula \Ref{forMulaSing} follows from \cite[Formula 5.3]{MTV2} and
\cite[Lemma 2.2]{MTV2}.
\end{proof}

Given $\aa=(a_1\lc a_n)\in\C^n$, denote by
$I_\aa\subset\Czs$ the ideal generated by the polynomials
$\si_i(\zb)-a_i$, \,$i=1\lc n$. For any $\aa$, the quotient
$\V^S\<\</I_\aa\V^S$ is a complex vector space of dimension $N^n$
by Lemma \ref{VSfree}.

\vsk.2>
\section{Yangian modules}
\label{Yang modules}

\subsection{Yangian $\Yn$}
\label{sec yangian}

The Yangian $\Yn$ is the unital associative algebra with generators
\,$T_{\ij}\+s$ \,for \,$i,j=1\lc N$, \;$s\in\Z_{>0}$, \,subject to relations
\vvn.3>
\beq
\label{ijkl}
(u-v)\>\bigl[\<\>T_{\ij}(u)\>,T_{\kl}(v)\<\>\bigr]\>=\,
T_{\kj}(v)\>T_{\il}(u)-T_{\kj}(u)\>T_{\il}(v)\,,\qquad i,j,k,l=1\lc N\,,
\vv.3>
\eeq
where
\vvn-.7>
\be
T_{\ij}(u)=\dl_{\ij}+\sum_{s=1}^\infty\,T_{\ij}\+s\>u^{-s}\>.
\vv.2>
\ee
The Yangian $\Yn$ is a Hopf algebra with the coproduct
\;$\Dl:\Yn\to\Yn\ox\Yn$ \,given by
\vvn-.4>
\be
\Dl\,:\,T_{\ij}(u)\,\mapsto\,\sum_{k=1}^N\,T_{\kj}(u)\ox T_{\ik}(u)
\vv.2>
\ee
\,for \,$i,j=1\lc N$\>. The Yangian $\Yn$ contains \>$\Ugln$ \,as a Hopf
subalgebra, the embedding given by \,$e_{\ij}\mapsto T_{\ji}\+1$.

The Yangian $\Yn$ has the degree function such that
$\on{deg}T_{i,j}^{\{s\}}=\>s-1$ for any \,$i,j=1\lc N$, \,$s=1,2,\ldots$.
The Yangian $\Yn$ is a filtered algebra with the increasing filtration
$F_0\Yn\subset F_1\Yn \subset\dots\subset\Yn$, where $F_s\Yn$ consists of
elements of degree $\leq s$.

\vsk.2>
There is a one-parameter family of automorphisms
\vvn.3>
\be
\rho_b:\>\Yn\to\Yn,\qquad T_{i,j}(u) \mapsto T_{i,j}(u-b)\,,
\vv.3>
\ee
where $b\in\C$ and $(u-b)^{-1}$ in the right-hand side has to be expanded
as a power series in $u^{-1}$.

The evaluation homomorphism \;$\epe:\Yn\to U(\glN)$ is defined by the rule:
$T^{\{1\}}_{i,j} \mapsto e_{j,i}$ for all $i,j$, and $T^{\{s\}}_{i,j}\mapsto 0$
for all $i,j$ and all $s>1$.

For a $\glN$-module $M$ and $b\in\C$, we denote by $M(b)$ the $\Yn$-module
induced from $M$ by the homomorphism $\epe\cdot\rho_b$. We call it the
evaluation module with the evaluation point $b$.

Recall that we consider $\C^N$ as the $\glN$-module with highest weight
$(1,0\lc 0)$. For any $\bb=(b_1\lc b_n)\in \C^n$, we obtain the $\Yn$-module
\vvn.3>
\be
V(\bb)\,=\,\C^N(b_1)\lox \C^N(b_n)\,.
\vv.3>
\ee

\subsection{Yangian module $\V^S$}
Consider $\C^N\!\ox V=\C^N\ox (\C^N)^{\ox n}$, where the factors are labeled by
\,$0,1\lc n$. Set
\vvn-.1>
\be
\label{Lpm}
L(u)\,=\,(u-z_n+\<\>P^{(0,n)})\dots(u-z_1+\<\>P^{(0,1)})\,.
\vv.3>
\ee
This is a polynomial in \,$u,\>\zzz$ \,with values
in \,$\End(\C^N\!\ox V)$. We consider $L(u)$ as an $N\!\times\!N$ matrix
with \,$\End(V)\ox\C[u,\zzz]\>$-valued entries \,$L_{\ij}(u)$, \,$i,j=1\lc N$.

\begin{lem}
The assignment
\vvn-.3>
\beq
\label{pho}
\pho\ :\ T_{\ij}(u)\ \mapsto \ L_{\ij}(u)\,\prod_{a=1}^n\,(u-z_a)^{-1}
\eeq
defines the $\Yn$-module structure on $\V = (\C^N)^{\ox n}\ox \C[\zzz]$. We consider the right-hand side of \Ref{pho} as a series
in \,$u^{-1}$ with coefficients in \,$\End(V)\ox\Czh$\>.
\end{lem}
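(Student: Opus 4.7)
The assignment $\pho$ defines an algebra homomorphism from $\Yn$ if and only if the proposed images $\tilde L_{\ij}(u):=L_{\ij}(u)\prod_{a=1}^n(u-z_a)^{-1}$ satisfy the defining relations \Ref{ijkl}. A preliminary check is that the expansion of $\tilde L_{\ij}(u)$ in $u^{-1}$ begins with $\dl_{\ij}$: the product formula for $L(u)$ yields $L(u)=u^n I+O(u^{n-1})$, while $\prod_a(u-z_a)=u^n+O(u^{n-1})$, so $\tilde L_{\ij}(u)=\dl_{\ij}+O(u^{-1})$, matching the form of $T_{\ij}(u)$. The scalar factor $\prod_a(u-z_a)^{-1}(v-z_a)^{-1}$ is central and occurs on both sides of \Ref{ijkl}, so the relations for $\tilde L$ are equivalent to the polynomial identity
\be
(u-v)\bigl[L_{\ij}(u),L_{\kl}(v)\bigr]\,=\,L_{\kj}(v)\>L_{\il}(u)-L_{\kj}(u)\>L_{\il}(v)\,,\quad i,j,k,l=1\lc N\,.
\ee

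I would repackage this componentwise identity as a matrix RTT equation. Introduce a second auxiliary copy of $\C^N$ labelled $0'$ and let $R(u)=u I+P^{(0,0')}$ be the rational $R$-matrix on $\C^N\<\ox\C^N$. A direct index computation (the sign $+P$ is chosen to reproduce the signs in \Ref{ijkl}) shows that the displayed identity is equivalent to
\be
R^{(0,0')}(u-v)\,L_0(u)\,L_{0'}(v)\,=\,L_{0'}(v)\,L_0(u)\,R^{(0,0')}(u-v)
\ee
acting on $\C^N\<\ox\C^N\<\ox V\<\ox\C[u,v,\zzz]$, where $L_0(u)$ and $L_{0'}(v)$ denote $L(u)$ and $L(v)$ with the auxiliary space placed in slot $0$, resp.\ $0'$.

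The key observation is that $L(u)=L^{(n)}_0(u)\cdots L^{(1)}_0(u)$ with $L^{(a)}_0(u):=(u-z_a)I+P^{(0,a)}=R^{(0,a)}(u-z_a)$, so each factor is itself a shifted $R$-matrix. The Yang-Baxter equation for the rational $R$-matrix therefore gives the single-site RTT relation
\be
R^{(0,0')}(u-v)\,L^{(a)}_0(u)\,L^{(a)}_{0'}(v)\,=\,L^{(a)}_{0'}(v)\,L^{(a)}_0(u)\,R^{(0,0')}(u-v)
\ee
for every $a=1\lc n$. Moreover, whenever $a\ne b$ the factors $L^{(a)}_0(u)$ and $L^{(b)}_{0'}(v)$ act on disjoint tensor components $\{0,a\}$ and $\{0',b\}$, hence commute. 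Combining these ingredients, I would prove the matrix RTT equation by induction on $n$: peel off the outermost factors $L^{(n)}_0(u)$ and $L^{(n)}_{0'}(v)$, use the disjoint-support commutation to pair them up (leaving the inner products $L^{(n-1)}_0\cdots L^{(1)}_0$ and $L^{(n-1)}_{0'}\cdots L^{(1)}_{0'}$ intact on the right), apply the single-site RTT to pass $R^{(0,0')}(u-v)$ through the outermost pair, and invoke the inductive hypothesis on the remaining inner monodromy.

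This is the standard ``co-multiplication of RTT relations'' construction of a monodromy matrix from $n$ elementary $L$-operators. The argument is purely algebraic; the only real obstacle is clerical, namely tracking which auxiliary and quantum spaces each operator acts on and fixing the $R$-matrix sign consistent with \Ref{ijkl}.
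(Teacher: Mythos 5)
Your proof is correct and follows essentially the same route as the paper: the paper likewise deduces the RTT relation $(u-v+P^{(1,2)})\,L^{(1)}(u)\>L^{(2)}(v)=L^{(2)}(v)\>L^{(1)}(u)\,(u-v+P^{(1,2)})$ from the Yang--Baxter equation and then matches components with the defining relations \Ref{ijkl} of $\Yn$. The only difference is level of detail: the paper compresses into the single word ``implies'' the induction over the $n$ local factors that you spell out, together with the routine checks of the constant term $\dl_{\ij}$ and the cancellation of the central scalar factor.
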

\begin{proof}
The Yang-Baxter equation
\vvn.3>
\begin{align}
\label{YB}
(u-v+h\<\>P^{(1,2)})\> &(u+h\<\>P^{(1,3)})\>(v+h\<\>P^{(2,3)})\,=\,
\\[3pt]
{}=\,{}&(v+h\<\>P^{(2,3)})\>(u+h\<\>P^{(1,3)})\>(u-v+h\<\>P^{(1,2)})\,.
\notag
\\[-22pt]
\notag
\end{align}
implies that
\vvn-.1>
\be
(u-v+P^{(1,2)})\,L^{(1)}(u)\>L^{(2)}(v)\,=\,
L^{(2)}(v)\>L^{(1)}(u)\,(u-v+P^{(1,2)})\,,
\vv.1>
\ee
which means
\vvn-.3>
\be
(u-v)\>\bigl[\<\>L_{\ij}(u)\>,L_{\kl}(v)\<\>\bigr]\>=\,
L_{\kj}(v)\>L_{\il}(u)-L_{\kj}(u)\>L_{\il}(v)
\vv.3>
\ee
for all \,$i,j,k,l=1\lc N$. Comparing the last formula with the defining
relations \Ref{ijkl} for the Yangian \>$\Yn$ completes the proof.
\end{proof}

The subalgebra $U(\gln)\subset\Yn$ acts on $\V$ in the standard way:
an element \,$x\in\gln$ \,acts as $x^{(1)}\lsym+x^{(n)}$.

\begin{lem}
\label{lem filt}
The $\Yn$-action on $\V$ is filtered: for any $k,s$, we have $F_s\Yn\times F_k\V \to F_{s+k}\V$.
\qed
\end{lem}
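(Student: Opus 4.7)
The plan is to compute the image $\pho(T_{\ij}\+s)$ under the homomorphism \Ref{pho} explicitly and bound its $\zz$-degree by $s-1$. First, I would expand
\[
L_{\ij}(u) \,=\, \sum_{k=0}^n u^{n-k}\<\> M_{\ij}^{(k)}
\]
by distributing the product in the definition \Ref{Lpm} of $L(u)$: each $M_{\ij}^{(k)}$ is a sum over $k$-element subsets $S\subseteq\{1\lc n\}$ of ordered products of the factors $-z_a+P^{(0,a)}$ with $a\in S$. Since each $z_a$ contributes $\zz$-degree $1$ and each $P^{(0,a)}$ is $\zz$-independent, $\deg_\zz M_{\ij}^{(k)}\le k$; moreover, the top $\zz$-degree-$k$ part, obtained only by picking $-z_a$ in every chosen factor, equals $\dl_{\ij}\>(-1)^k\si_k(\zz)$. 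Hence
\[
L_{\ij}(u) \,=\, \dl_{\ij}\prod_{a=1}^n(u-z_a) \,+\, \sum_{k=1}^n u^{n-k}\<\> N_{\ij}^{(k)}\,, \qquad \deg_\zz N_{\ij}^{(k)}\le k-1.
\]

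Next, I would divide by $\prod_{a=1}^n(u-z_a)$ as in \Ref{pho} and expand as a formal series in $u^{-1}$ using $u^{n-k}\prod_{a=1}^n(u-z_a)^{-1} = \sum_{m\ge 0} h_m(\zz)\,u^{-k-m}$, where $h_m$ denotes the $m$-th complete symmetric polynomial. The scalar $\dl_{\ij}$ contributes only to the coefficient of $u^0$, and reading off the coefficient of $u^{-s}$ gives
\[
\pho\bigl(T_{\ij}\+s\bigr) \,=\, \sum_{k=1}^{\min(s,n)} N_{\ij}^{(k)}\,h_{s-k}(\zz)\,,
\]
whose $\zz$-degree is bounded by $(k-1)+(s-k) = s-1$. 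As an operator on $\V$, this acts by multiplication by an $\End(V)$-valued polynomial of $\zz$-degree $\le s-1$, so it sends $F_k\V$ into $F_{k+s-1}\V$.

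Finally, I would extend the bound from the algebra generators to all of $F_s\Yn$. Any element of $F_s\Yn$ is a linear combination of products $T_{i_1,j_1}\+{s_1}\cdots T_{i_m,j_m}\+{s_m}$ with $\sum_r(s_r-1)\le s$, together with the scalars in $F_0\Yn=\C$. Applying the single-generator bound inductively on $m$ then yields $F_s\Yn\cdot F_k\V\subseteq F_{s+k}\V$.

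The subtle step is the sharp degree count: the naive estimate for $\pho(T_{\ij}\+s)$ gives only $\le s$, and the drop to $\le s-1$ hinges on recognizing that the top $\zz$-degree part of $L_{\ij}(u)$ is the scalar polynomial $\dl_{\ij}\prod_a(u-z_a)$, which is cancelled exactly by the denominator in \Ref{pho}. Once this cancellation is identified, the rest is routine power-series manipulation.
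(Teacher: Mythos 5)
Your proof is correct, and since the paper states this lemma with no proof (it is marked as immediate), your explicit computation supplies exactly the verification that is left to the reader. The one genuinely non-obvious point is the one you flag: the generator $T_{\ij}\+s$ has Yangian degree $s-1$, so the naive bound $\deg_\zz\pho(T_{\ij}\+s)\le s$ would not suffice (elements of $F_s\Yn$ may contain arbitrarily many degree-zero factors $T_{\ij}\+1$), and your observation that the top $\zz$-degree part of $L_{\ij}(u)$ is $\dl_{\ij}\prod_a(u-z_a)$, cancelled by the denominator in \Ref{pho}, is what yields the needed bound $s-1$; the extension from generators to monomials spanning $F_s\Yn$ is then routine, as you say.
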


\begin{lem}
\label{pm & Yan 1}
The $\Yn$-action \,$\pho$ on $\V$ commutes with the $S_n$-action \Ref{Sn+}
and with multiplication by the elements of \,$\C[z_1,\dots,z_n]$.
\end{lem}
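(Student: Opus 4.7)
The lemma separates into two claims. For commutation with multiplication by $\C[\zzz]$, inspect (\ref{pho}): each coefficient of $\pho(T_{ij}(u))$ in its $u^{-1}$-expansion lies in $\End(V)\otimes\C[\zzz]$, and such elements tautologically commute with multiplication on $\V=V\otimes\C[\zzz]$ by polynomials in $\zzz$.

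For commutation with the $S_n$-action, it suffices to treat the elementary transpositions $s_i$. Since the scalar $\prod_a(u-z_a)^{-1}$ has $S_n$-symmetric coefficients (and commutes with $L(u)$ by the previous paragraph, as one checks directly from (\ref{Sn+}) for $S_n$-invariant multipliers), the assertion reduces to the matrix identity $s_i\,L(u)=L(u)\,s_i$ in $\End(\C^N\otimes\V)$, with $s_i$ extended trivially over the auxiliary factor labelled $0$. I then factor
\be
L(u)=A_i(u)\,M_i(u)\,B_i(u),\qquad
M_i(u)=(u-z_{i+1}+P^{(0,i+1)})(u-z_i+P^{(0,i)}),
\ee
with $A_i(u)$ and $B_i(u)$ the ordered subproducts over $j>i+1$ and $j<i$. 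Since neither involves $z_i,z_{i+1}$ nor the tensor factors $V_i,V_{i+1}$, both commute with $s_i$ for formal reasons, and the problem reduces to $s_i\,M_i(u)=M_i(u)\,s_i$.

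For this last step, write $s_i=(P^{(i,i+1)}-(z_i-z_{i+1})^{-1})\,\tau_i+(z_i-z_{i+1})^{-1}$, where $\tau_i$ permutes $z_i\leftrightarrow z_{i+1}$ in the polynomial argument only. The scalar summand commutes with $M_i(u)$, and after clearing $(z_i-z_{i+1})$ the remaining content is
\be
\bigl((z_i-z_{i+1})P^{(i,i+1)}-1\bigr)\,(u-z_i+P^{(0,i+1)})(u-z_{i+1}+P^{(0,i)})\,=\,M_i(u)\,\bigl((z_i-z_{i+1})P^{(i,i+1)}-1\bigr),
\ee
which is precisely the Yang--Baxter equation (\ref{YB}) applied to the three legs $(i,i+1,0)$ at spectral parameters $u-z_i$ and $u-z_{i+1}$, after using $P^{(i,i+1)}P^{(0,i)}P^{(i,i+1)}=P^{(0,i+1)}$ to rewrite one side. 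This is the only substantive step; the main difficulty is purely clerical, in matching the ordering and numbering conventions of (\ref{YB}) to the present setup.
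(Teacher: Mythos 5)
Your proposal is correct and follows essentially the same route as the paper, whose proof simply observes that the second claim is immediate and that the commutation with the $S_n$-action \Ref{Sn+} follows from the Yang--Baxter equation \Ref{YB}. You have merely made that one-line argument explicit: the reduction to the adjacent block $M_i(u)$, the splitting of $s_i$ into its $\tau_i$-twisted and scalar parts, and the identification of the resulting identity with \Ref{YB} on the legs $(i,i+1,0)$ at spectral parameters $u-z_i$, $u-z_{i+1}$ (using $P^{(i,i+1)}P^{(0,i)}P^{(i,i+1)}=P^{(0,i+1)}$) are all carried out correctly.
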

\begin{proof}
The first part follows from the Yang-Baxter equation \Ref{YB}, and
the second part is clear.
\end{proof}

By Lemma \ref{pm & Yan 1}, the action \,$\pho$ makes the space \,$\V^S$ into
a filtered $\Yn$-module. For any $\aa=(a_1\lc a_n)\in\C^N$, the subspace
$I_\aa\V^S$ is a $\Yn$-submodule.

\begin{lem}[{\cite[Proposition 4.6]{GRTV}}]
\label{V+v1 1}
The \,$\Yn$-module \;$\V^S\!$ is generated by the vector
\,$\voxn\<\<=\<\>\vone$.
\qed
\end{lem}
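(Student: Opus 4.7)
\noindent\emph{Proof plan.}
My plan is to reduce the statement to an analogous cyclicity result for the current algebra $\gln[t]$ by passing to associated graded. First, I check that $v_1^{\otimes n}\<\in\V^S$: since $P^{(\ioi)}\>v_1^{\otimes n}\!=v_1^{\otimes n}$ and $v_1^{\otimes n}$ is constant in $\zb$, substituting into \Ref{Sn+} gives
\be
s_i\cdot v_1^{\otimes n}\,=\,\frac{(z_i-z_{i+1})-1}{z_i-z_{i+1}}\,v_1^{\otimes n}\>+\,\frac{1}{z_i-z_{i+1}}\,v_1^{\otimes n}\,=\,v_1^{\otimes n}\,.
\ee

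Set $M=\pho(\Yn)\cdot v_1^{\otimes n}\subset\V^S$. Lemma \ref{lem filt}, together with the fact that the $\Yn$-action on $\V^S$ is the restriction of the one on $\V$, implies that $M$ inherits a filtration and that $\on{gr} M$ is a submodule of $\on{gr}\V^S$ over $\on{gr}\Yn$. The Yangian PBW theorem identifies $\on{gr}\Yn$ with $U(\gln[t])$ via the rule $T_{\ji}^{\{s+1\}}\<\mapsto e_{\ij}\ox t^s$.

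Next I would compute the principal symbol of $\pho$ relative to this filtration. Expanding $L(u)=(u-z_n+P^{(0,n)})\cdots(u-z_1+P^{(0,1)})$ and normalizing by $\prod_a(u-z_a)^{-1}$, the top $z$-degree part of the coefficient of $u^{-s-1}$ in $T_{\ij}(u)$ acts on $\V$ as the standard current-algebra operator $\sum_{a=1}^n z_a^s\>e_{\ji}^{(a)}$, with the remaining terms of strictly lower $z$-degree. This principal-symbol action commutes with the $S_n$-action \Ref{Sn+}, so it descends to $\V^S$ and identifies $\on{gr}\V^S$ with a $\gln[t]$-module of the type analyzed in \cite{MTV3}. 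Cyclicity of that module on the class of $v_1^{\otimes n}$ is established there, yielding $\on{gr} M=\on{gr}\V^S$; since the filtrations are exhaustive with finite-dimensional graded pieces, this forces $M=\V^S$.

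The main technical hurdle is the principal-symbol identification: one must verify carefully that after dividing by $\prod_a(u-z_a)$, the top-$z$-degree contribution to each $T_{\ij}^{\{s\}}$ really matches the classical current-algebra action on $V\ox\C[\zb]$, and that the passage to associated graded is compatible with taking the $S_n$-twisted invariants used to define $\V^S$. Once this matching is in place, the argument is completed by invoking the corresponding generation statement from \cite{MTV3}.
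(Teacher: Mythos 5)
The paper gives no internal proof of this lemma\,---\,it is quoted from \cite[Proposition 4.6]{GRTV} with a \,$\square$\,---\,so your argument is a genuinely different route: reduce to the current-algebra case by passing to the associated graded and quote the parallel cyclicity statement for $\glnt$ from \cite{MTV3,MTV4}. The pieces you do carry out are correct: $\voxn$ is indeed $S_n$-invariant; with the induced filtration $F_kM=M\cap F_k\V^S$ (you should say this explicitly, so that $\on{gr}M\hookrightarrow\on{gr}\V^S$), Lemma \ref{lem filt} makes $\on{gr}M$ a module over the graded action; and the symbol computation is right: the coefficient of $u^{-s-1}$ in the action \Ref{pho} of $T_{\ij}(u)$ equals $\sum_{a=1}^n z_a^s\,e_{\ji}^{(a)}$ plus terms of $z$-degree $<s$, while the leading term of \Ref{Sn+} is the plain simultaneous permutation of tensor factors and variables. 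Note that you do not even need the full PBW identification $\on{gr}\Yn\cong U(\glnt)$; it suffices that each current generator acts on $\on{gr}\V^S$ as the symbol of some $T^{\{s+1\}}_{\ji}$.

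The one point you leave as a ``technical hurdle'' is the only real gap, and it is not automatic from the symbol computation: you need that $\on{gr}\V^S$, viewed inside $\on{gr}\V=V\ox\C[\zzz]$, is \emph{exactly} the space of invariants for the plain $S_n$-action (the module whose cyclicity is proved in \cite{MTV3,MTV4}), not a proper graded subspace of it. This can be closed with material already quoted in the present paper: the symbol argument gives the inclusion $\on{gr}\V^S\subseteq(V\ox\C[\zzz])^{S_n}$ (plain action), and equality then follows, weight space by weight space, from the coincidence of graded characters in Lemma \ref{lem on char of VSl} (equivalently from the freeness and rank statement of Lemma \ref{VSfree}, i.e.\ \cite[Lemma 2.10]{GRTV}), since all filtration pieces of the weight subspaces are finite-dimensional; these character statements are established independently of any cyclicity claim, so there is no circularity. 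With that identification in place, $\on{gr}M$ is a $U(\glnt)$-submodule of $(V\ox\C[\zzz])^{S_n}$ containing the class of $\voxn$, the $\glnt$-cyclicity result gives $\on{gr}M=\on{gr}\V^S$, and exhaustiveness of the filtration yields $M=\V^S$, as you say. So the plan is sound; what it buys, compared with simply citing \cite{GRTV}, is an essentially self-contained deduction of the Yangian statement from its $\glnt$ analogue, at the cost of the extra bookkeeping about compatibility of the filtration with the twisted invariants.
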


For $\aa=(a_1\lc a_n)\in\C^N$, introduce complex numbers \,$b_1\lc b_n$
by the relation
\beq
\label{ab}
\prod_{s=1}^n\,(u-b_s)\,=\,u^n+\>\sum_{j=1}^n (-1)^j\>a_j\>u^{n-j}.
\eeq
The numbers are defined up to a permutation.

\begin{prop}
\label{VSVb}
Assume that the numbers \,$b_1\lc b_n$ are ordered such that \,$b_i\ne b_j+1$
for $i>j$. Then the $\Yn$-module $\V^S\<\</I_\aa\V^S$ is isomorphic to
$V(\bb)=\C^N(b_1)\lox\C^N(b_n)$, the tensor product of
evaluation $\Yn$-modules.
\end{prop}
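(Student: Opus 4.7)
The plan is to construct an explicit \,$\Yn$-equivariant evaluation map \,$\pi:\V^S\!/I_\aa\V^S\to V(\bb)$, and show it is an isomorphism by identifying cyclic generators on both sides and invoking a dimension count.

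First I would define the evaluation map \,$\on{ev}_\bb:\V\to V$, \,$z_a\mapsto b_a$. A direct comparison of the definition \Ref{Lpm} with the iterated coproduct formula for $V(\bb)$ shows that substituting $z_a=b_a$ in $L(u)$ yields precisely $\prod_{a=1}^n(u-b_a)$ times the action of $\Delta^{(n-1)}T_{\ij}(u)$ on $V(\bb)=\C^N(b_1)\lox\C^N(b_n)$; hence $\on{ev}_\bb$ intertwines the $\Yn$-action $\pho$ on $\V$ with the tensor-product $\Yn$-action on $V(\bb)$. By \Ref{ab} the equalities $\si_i(\bb)=a_i$ hold, so every generator $\si_i(\zz)-a_i$ of $I_\aa$ vanishes at $\zz=\bb$; thus $\on{ev}_\bb$ annihilates $I_\aa\V^S$, and restricting to the $S_n$-invariants it descends to the desired equivariant map $\pi$.

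For surjectivity, I first note that $\voxn\in\V^S$: the identity $P^{(i,i+1)}\voxn=\voxn$ makes the two terms in \Ref{Sn+} add up to $\voxn$, so each $s_i$ fixes $\voxn$. By Lemma \ref{V+v1 1}, $\voxn$ generates $\V^S$ as a \,$\Yn$-module, and $\pi$ sends it to the vector $\voxn\in V(\bb)$. Hence the image of $\pi$ is the \,$\Yn$-submodule $\Yn\cdot\voxn\subset V(\bb)$, and by Lemma \ref{VSfree} the source $\V^S\!/I_\aa\V^S$ has dimension $N^n=\dim V(\bb)$ (since $I_\aa$ is a maximal ideal of $\Czs$ with residue field $\C$). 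Once $\Yn\cdot\voxn=V(\bb)$ is known, $\pi$ is forced to be an isomorphism.

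The main obstacle is therefore establishing cyclicity of $V(\bb)$ on $\voxn$ under the ordering hypothesis $b_i\ne b_j+1$ for $i>j$. This is a classical cyclicity theorem for tensor products of evaluation vector modules over $\Yn$: $\voxn$ is a highest weight vector with known rational eigenvalues of the diagonal series $T_{\ii}(u)$, and the condition $b_i\ne b_j+1$ for $i>j$ precisely prevents the pole collisions that would obstruct the lowering operators $T_{\ji}(u)$ (for $i<j$) from producing a full basis of weight vectors in $\Yn\cdot\voxn$. The statement can be proved by induction on $n$ via embeddings of $Y(\mathfrak{gl}_2)$-subalgebras into $\Yn$, and is completely parallel to the cyclicity used in the $\gln[t]$-setting of \cite{MTV3}.
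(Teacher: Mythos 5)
Your proposal is correct and follows essentially the same route as the paper: evaluate at \,$\zb=\bb$ to get a \,$\Yn$-homomorphism \,$\V^S\<\</I_\aa\V^S\to V(\bb)$ sending \,$\voxn$ to \,$\voxn$, use cyclicity of \,$V(\bb)$ on \,$\voxn$ under the ordering condition \,$b_i\ne b_j+1$ for \,$i>j$, and conclude by the dimension count \,$N^n$ from Lemma \ref{VSfree}. The only difference is cosmetic: the paper imports the cyclicity statement directly from \cite[Proposition~3.1]{NT2} rather than sketching its proof, and treats the intertwining property of the evaluation map as immediate.
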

\begin{proof}
Consider the map \,$\phi:\V^S\!\to V(\bb)$ that sends every element of \,$\V^S$
to its value at the point \,$\zb=(b_1\lc b_n)$\>. This map is
a homomorphism of $\Yn$-modules and factors through the canonical projection
\,$\thi:\V^S\!\to\V^S\<\</I_\aa\V^S$. Since \,$\thi$ \>is also a homomorphism
of $\Yn$-modules, this defines a homomorphism of $\Yn$-modules
\,$\psi:\V^S\<\</I_\aa\V^S\!\to V(\bb)$.

\vsk.2>
Under the assumption that \,$b_i\ne b_j+1$ for \,$i>j$\>, the $\Yn$-module
\,$V(\bb)$ is generated by the vector \,$\voxn$\>, see
\cite[Proposition~3.1]{NT2}. Therefore, the map \,$\psi$
\,is surjective because \,$\psi(\voxn)=\voxn$\>, and since
\;$\dim\<\>\V^S\<\</I_\aa\V^S\<=N^n\<=\dim\<\>V(\bb)$,
the map \,$\psi$ \,is an isomorphism of $\Yn$-modules.
\end{proof}

\begin{prop}
\label{irr}
The $\Yn$-module $V(\bb)$ is irreducible if and only if
\,$b_i\ne b_j+1$ for all \,$i\ne j$.
\end{prop}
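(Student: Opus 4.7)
The plan is to prove the two implications separately.

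\emph{Necessity.} Suppose $b_i=b_j+1$ for some $i\ne j$. I will exhibit a proper nonzero $\Yn$-submodule. The key case is $n=2$ with $\bb=(b+1,b)$: a direct computation with the coproduct $\Dl$ of Section~\ref{sec yangian} shows that the antisymmetric subspace $\Lambda^2\C^N\subset\C^N(b+1)\ox\C^N(b)$ is a $\Yn$-submodule. This reflects the fact that the relevant $R$-matrix $R(u)=u+P$ is degenerate exactly at $u=1$, and a short explicit calculation confirms that each $T_{ij}(u)$ preserves $\Lambda^2\C^N$. For general $n$, I would move the two offending factors to adjacent positions by a sequence of $R$-matrix intertwiners $\C^N(a)\ox\C^N(c)\to\C^N(c)\ox\C^N(a)$, which are well-defined $\Yn$-isomorphisms whenever $a-c\ne\pm1$; by a careful choice of route one can always avoid the other potentially degenerate swaps (if more than one pair is ``bad'', just fix one bad pair and swap only the intervening factors, among which no bad-pair swaps are ever forced).

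\emph{Sufficiency.} Now suppose $b_i\ne b_j+1$ for all $i\ne j$. The hypothesis is symmetric under arbitrary reorderings of $\bb$, so \cite[Proposition~3.1]{NT2} applies both to $\bb$ and to its reverse, yielding the $\Yn$-cyclicity of both the highest-weight vector $\voxn$ and the lowest-weight vector $v_N^{\ox n}$ in $V(\bb)$. Let $W\subset V(\bb)$ be a nonzero $\Yn$-submodule. Since $\Ugln\subset\Yn$, $W$ is a $\gln$-submodule and therefore contains a $\gln$-highest weight vector $w$ of some dominant weight. The remaining task is to show that $w$ must be a scalar multiple of $\voxn$, which by cyclicity forces $W=V(\bb)$.

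\emph{Main obstacle.} The last identification is the technical heart. The cleanest route is via Drinfeld polynomials: the $\Yn$-isomorphism type of $V(\bb)$ is encoded by $P(u)=\prod_s(u-b_s)$, and the hypothesis $b_i\ne b_j+1$ precisely rules out any factorization of $P$ into ``Yangian strings'' that would give rise to a nontrivial subquotient; consequently the only $\gln$-highest weight vector generating a proper $\Yn$-submodule is zero. Alternatively, one can argue by induction on $n$, combining the $n=2$ analysis above with the explicit $L$-operator of Section~\ref{Yang modules} to compute directly how the off-diagonal series $T_{ij}(u)$ act on a candidate $w$, and using the co-cyclicity of $v_N^{\ox n}$ to rule out any $\gln$-singular vector other than (a multiple of) $\voxn$.
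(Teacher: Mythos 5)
The paper itself disposes of this proposition by citing \cite[Theorem~3.4]{NT2}, so anything you prove here must be genuinely self-contained; as it stands your sufficiency half is not. Your necessity half is essentially sound and can be made rigorous with small additions: whichever of $S^2\C^N$ or $\La^2\C^N$ turns out to be invariant in $\C^N(b+1)\ox\C^N(b)$ (the sign depends on conventions, but one of them is), you get a proper submodule for $n=2$; and your unproved routing claim has an easy fix --- choose a bad pair $(i,j)$ with $b_i=b_j+1$ at minimal distance $|i-j|$, and then every intervening swap needed to bring them together is invertible, since a degenerate swap would produce a closer bad pair. The fact that $M_1\ox W\ox M_3$ is a submodule when $W$ is, which you use implicitly, follows at once from the coproduct formula.

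The genuine gap is in sufficiency, and it is exactly the step you yourself label the ``main obstacle'': neither of your two proposed routes works as described. The Drinfeld polynomial $\prod_s(u-b_s)$ is attached to the highest weight of $V(\bb)$ and is the same for $V(\bb)$ and for the irreducible quotient of the cyclic span of $v_1^{\ox n}$, so it cannot by itself ``rule out nontrivial subquotients''; the statement that no factorization into strings causes reducibility is precisely the content of \cite[Theorem~3.4]{NT2}, so invoking it is circular. Moreover, the ingredients you do establish --- cyclicity of the highest weight vector and (granting the extra automorphism/reversal argument you gloss over, since \cite[Proposition~3.1]{NT2} applied to the reversed order gives cyclicity in $V(\bb^{\mathrm{rev}})$, not of $v_N^{\ox n}$ in $V(\bb)$) of the lowest weight vector --- are insufficient in principle: for $Y(\gl_2)$ the module $\C^2(b)\ox\C^2(b+1)$, ordered so that its unique proper submodule is the one-dimensional one, is cyclically generated by both $v_1\ox v_1$ and $v_2\ox v_2$ (neither lies in the one-dimensional submodule, which is the only proper submodule), yet it is reducible. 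What the argument actually requires is \emph{cocyclicity} of $v_1^{\ox n}$ --- every nonzero submodule contains it --- which one gets from cyclicity of the \emph{dual} module via the standard duality of tensor products of evaluation modules together with the multiplicity-one property of the top weight; none of this is in your sketch. Either carry out that duality argument in detail, or do as the paper does and simply cite \cite[Theorem~3.4]{NT2}.
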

\begin{proof}
The statement follows, for instance, from \cite[Theorem~3.4]{NT2}.
\end{proof}

\section{Bethe subalgebras}
\label{sec Bethe algebra}

\subsection{Bethe subalgebras}
\label{sec Bethe alg}
For \;$k=1\lc N$, \;$\ib=\{1\leq i_1<\dots<i_k\leq N\}$,
\;$\jb=\{1\leq j_1<\dots<j_k\leq N\}$, define
\vvn-.3>
\be
M_{\ijb}(u) =\sum_{\si\in S_k}(-1)^\si\,
T_{i_1,j_{\si(1)}}(u)\dots T_{i_k,j_{\si(k)}}(u-k+1)\,.
\ee
For \,$\ib=\{1\lc N\}$, \,the series \,$M_{\iib}(u)$ is called the quantum
determinant and denoted by $\qdet T(u)$. Its coefficients generate the center
of the Yangian \,$\Yn$.

\goodbreak
\vsk.2>
For \,$\qq = (\kkk)\in(\C^*)^N$ and \,$k=1\lc N$, \, we define
\beq
\label{Bp}
\Bq_k(u)\,=\!\sum_{\ib\>=\<\>\{1\leq i_1<\<\dots<\>i_k\leq N\}}\!\!
\kk_{i_1}\!\dots\kk_{i_k}\>M_{\iib}(u)\,=\,\si_k(\kkk)+
\sum_{s=1}^\infty\>\Bq_{k,s}\>u^{-s}\>,\kern-1em
\vv.2>
\eeq
where \,$\si_k$ is the \,$k$-th elementary symmetric function and
\,$\Bq_{k,s}\in\Yn$. In particular,
\vvn.2>
\be
\Bq_N(u)\,=\,q_1\dots q_N\>M_{\iib}(u),
\vv.2>
\ee
where $\ib=\{1\lc N\}$. The generating series $\Bq_k(u)$, \,$k=1\lc N$, are
called the transfer-matrices.

\begin{lem}
We have $B_{k,s}^\qq\in F_s\Yn$ for all $k,s$.
\qed
\end{lem}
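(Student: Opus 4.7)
The plan is to unwind the definition of $B^\qq_k(u)$ and bound the degree (with respect to the Yangian filtration) of each monomial that contributes to the coefficient of $u^{-s}$.

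First I would recall two basic facts. (i) The filtration $F_\bul\Yn$ is multiplicative: a product of generators $T_{i_1,j_1}^{\{r_1\}}\dots T_{i_m,j_m}^{\{r_m\}}$ has degree $\le (r_1-1)+\dots+(r_m-1)$; this is a standard consequence of the fact that the defining relations \Ref{ijkl} are homogeneous of degree $-1$ in the degree function $\deg T_{\ij}^{\{s\}}=s-1$. (ii) The shift $(u-c)^{-r}$ expands in $u^{-1}$ as
\be
(u-c)^{-r}\,=\,\sum_{m\ge r}\,\binom{m-1}{r-1}\,c^{\>m-r}\,u^{-m},
\ee
so the lowest power of $u^{-1}$ appearing in $T_{\ij}(u-c)$ beyond the constant $\dl_{\ij}$ is $u^{-r}$ times some $T_{\ij}^{\{r\}}$.

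Next, fix $k,s$ with $s\ge1$ and take $\ib=\{i_1<\dots<i_k\}$ and $\si\in S_k$. I would expand
\be
T_{i_1,j_{\si(1)}}(u)\dots T_{i_k,j_{\si(k)}}(u-k+1)
\,=\,\prod_{l=1}^k\,\Bigl(\dl_{i_l,j_{\si(l)}}
+\sum_{r_l\ge1}\,T_{i_l,j_{\si(l)}}^{\{r_l\}}\>(u-l+1)^{-r_l}\Bigr)
\ee
and collect the coefficient of $u^{-s}$. By fact (ii), only tuples $(r_1,\dots,r_k)$ with $\sum_{l}r_l\le s$ and at least one $r_l\ge1$ can contribute; the coefficient of $u^{-s}$ in $\prod_l(u-l+1)^{-r_l}$ is a scalar (a polynomial in the shifts). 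Hence every monomial contributing to $B^\qq_{k,s}$ is a scalar multiple of a product $\prod_{l:\,r_l\ge1}T_{i_l,j_{\si(l)}}^{\{r_l\}}$ with $\sum_{l:\,r_l\ge1}r_l\le s$, and by fact (i) its degree is at most
\be
\sum_{l:\,r_l\ge1}(r_l-1)\,\le\,\Bigl(\sum_{l}r_l\Bigr)-1\,\le\,s-1\,\le\,s.
\ee
Since $B^\qq_{k,s}$ is a $\C$-linear combination of such monomials (weighted by the $\kk_i$'s and the signs $(-1)^\si$), it lies in $F_s\Yn$.

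I do not expect a serious obstacle: the only subtle point is keeping track of the two effects of raising $r_l$, namely increasing both the power of $u^{-1}$ and the degree of $T_{\ij}^{\{r_l\}}$. Fact (i), the multiplicativity of the filtration, should be cited (or verified) explicitly since it is the only non-bookkeeping ingredient.
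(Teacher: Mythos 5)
Your argument is correct; the paper states this lemma without proof, and your degree bookkeeping is exactly the routine verification it leaves to the reader. Two small remarks: your fact (i) is automatic from the definition of $F_s\Yn$ as the span of products of generators of total degree $\le s$ (no compatibility with the relations is needed for the upper bound), and your estimate actually yields the slightly stronger containment $B^{\qq}_{k,s}\in F_{s-1}\Yn$, which of course implies the stated one.
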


Let \,$\Bcq\!\subset\Yn$ \,be the unital subalgebra generated by the elements
\,$\Bq_{k,s}$\>, \,$k=1\lc N$, \,$s>0$. The subalgebra \,$\Bcq\<$ is called
a Bethe subalgebra of $\Yn$. The subalgebra \,$\Bcq\<$ does not change if
all $\kkk$ are multiplied by the same number. If $\qq=(1\lc 1)$, then
the corresponding Bethe subalgebra will be denoted by $\Bo$.

\begin{thm}[\cite{KS}]
\label{BY}
The subalgebra \,$\Bcq$ is commutative and commutes with the subalgebra
$U(\h)\subset \Yn$. The subalgebra \,$\Bo$ commutes with the subalgebra
$U(\glN)\subset \Yn$.
\qed
\end{thm}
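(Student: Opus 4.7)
The plan is to prove Theorem \ref{BY} via the R-matrix (RLL) formalism. First, rewrite the defining relations \Ref{ijkl} in matrix form: with Yang's rational R-matrix $R(u)\in\End(\C^N\ox\C^N)$ (built linearly from the identity and the permutation $P$) and $T(u)=\sum_{i,j}E_{\ij}\ox T_{\ij}(u)$, relations \Ref{ijkl} are equivalent to
\be
R_{12}(u-v)\>T_1(u)\>T_2(v)\,=\,T_2(v)\>T_1(u)\>R_{12}(u-v).
\ee
Let $A^{(k)}$ be the antisymmetrizer on $(\C^N)^{\ox k}$ and set $T^{(k)}(u)=T_1(u)T_2(u-1)\cdots T_k(u-k+1)A^{(k)}$ as an operator on $\La^k\C^N$ whose matrix entries in the standard basis are exactly the quantum minors $M_{\ijb}(u)$. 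The fusion procedure applied to the RLL relation then yields
\be
\hat R^{(k,l)}(u-v)\>T^{(k)}_1(u)\>T^{(l)}_2(v)\,=\,T^{(l)}_2(v)\>T^{(k)}_1(u)\>\hat R^{(k,l)}(u-v)
\ee
on $\La^k\C^N\ox\La^l\C^N$, where $\hat R^{(k,l)}$ is built from appropriately shifted copies of $R$ and is invertible for generic $u-v$.

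Next, let $Q=\diag(\kkk)$ and $Q^{(k)}=\La^k Q$. Since $Q$ is diagonal, $P$ commutes with $Q\ox Q$, hence so do $R(u)$ and, by fusion, $\hat R^{(k,l)}(u-v)$ with $Q^{(k)}\ox Q^{(l)}$. Comparing with \Ref{Bp} gives $\Bq_k(u)=\tr_{\La^k\C^N}\bigl(Q^{(k)}T^{(k)}(u)\bigr)$. Applying $\tr_{1,2}\bigl((Q^{(k)}_1\ox Q^{(l)}_2)\,\cdot\,\bigr)$ to the fused RLL relation, inserting $\hat R^{(k,l)}(u-v)^{-1}\hat R^{(k,l)}(u-v)$, and using cyclicity of the trace together with $[Q^{(k)}\ox Q^{(l)},\hat R^{(k,l)}]=0$ yields $\Bq_k(u)\Bq_l(v)=\Bq_l(v)\Bq_k(u)$. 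Equating coefficients in $u^{-1},v^{-1}$ shows $[\Bq_{k,s},\Bq_{l,t}]=0$ for all $k,l,s,t$, so $\Bcq$ is commutative.

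For commutation of $\Bcq$ with $U(\h)$: from \Ref{ijkl} at lowest order, $[e_{aa},T_{\ij}(u)]=(\dl_{aj}-\dl_{ai})T_{\ij}(u)$, i.e.\ $T_{\ij}(u)$ carries $\on{ad}(e_{aa})$-weight $\dl_{aj}-\dl_{ai}$. Every monomial $T_{i_1,i_{\si(1)}}(u)\cdots T_{i_k,i_{\si(k)}}(u-k+1)$ in $M_{\iib}(u)$ has total weight $\sum_s(\dl_{a,i_{\si(s)}}-\dl_{a,i_s})=0$, yielding $[e_{aa},M_{\iib}(u)]=0$ and hence $[e_{aa},\Bq_k(u)]=0$ for every $a,k$ and every $\qq$. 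For $\qq=\bs1$ the transfer matrix specializes to $\tr_{\La^k\C^N}T^{(k)}(u)$, the trace of a $GL_N$-equivariant operator, which commutes with every $e_{ab}$; concretely, $[e_{ab},M_{\iib}(u)]$ is a signed sum of quantum minors obtained by substituting $b$ for an occurrence of $a$ in $\ib$, and these contributions cancel pairwise when summed over all $\ib$ with $|\ib|=k$. Combined with the $U(\h)$ part, this gives commutation of $\Bo$ with all of $U(\glN)$.

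The main technical hurdle is the fusion construction itself: verifying that $T^{(k)}(u)$ satisfies an RLL relation on $\La^k\C^N\ox\La^l\C^N$ with $\hat R^{(k,l)}$ invertible for generic $u-v$, and using polynomiality in $u,v$ to lift the resulting trace identity from generic spectral parameters to an identity of formal series in $u^{-1},v^{-1}$. Once fusion is in place, the remainder is standard cyclic-trace bookkeeping plus an elementary weight/index-cancellation check.
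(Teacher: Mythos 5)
The paper does not prove Theorem \ref{BY} at all: it is quoted from \cite{KS} and marked with a \qed, so there is no internal argument to compare against. Your proposal—realizing $\Bq_k(u)$ as the twisted trace $\tr_{\La^k\C^N}\bigl(\La^kQ\cdot T^{(k)}(u)\bigr)$, proving commutativity by the fused RLL relation plus cyclicity of the trace and $[Q^{(k)}\ox Q^{(l)},\hat R^{(k,l)}]=0$, and handling $U(\h)$ by the weight count and $U(\glN)$ (for $\qq=\bs1$) by the index-replacement cancellation, equivalently invariance of the untwisted fused trace under auxiliary-space conjugation—is precisely the standard proof underlying that citation, and it is correct as outlined, with the fusion facts you defer to being standard.
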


\medskip
As a subalgebra of $\Yn$, the Bethe algebra \,$\Bcq$ acts on any
$\Yn$-module $M$. Since \,$\Bcq$ commutes with $U(\h)$, it preserves the weight
subspaces $M_\bla$. The subalgebra $\Bo$ preserves the singular weight
subspaces $M_\bla^\sing$.

\vsk.2>
If $L\subset M$ is a \,$\Bcq$-invariant subspace,
then the image of \,$\Bcq$ in $\End(L)$ will be called the Bethe algebra of $L$
and denoted by \,$\Bcq(L)$.

\vsk.2>
We will study the action of $\Bcq$ on the weight subspaces $(\V^S)_\bla$ and
the action of $\Bo$ on the singular weight subspaces $(\V^S)_\bla^\sing$. The
image of $\Bq_{k,s}$ in $\End((\V^S)_\bla)$ will be denoted by $\Bql_{k,s}$.

\begin{lem}
The generating series $\Bq_N(u)$ acts on the $\Yn$-module $\V$ as
multiplication by the scalar function
\vvn-.3>
\beq
\label{BN}
q_1\dots q_N\,\prod_{i=1}^n\frac {u-z_i+1}{u-z_i} \,.
\vv->
\eeq
\qed

\end{lem}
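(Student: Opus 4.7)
The plan is to identify $\Bq_N(u)$ with a scalar multiple of the quantum determinant $\qdet T(u)$ and then to evaluate that quantum determinant directly using the coproduct structure of the $\Yn$-action $\pho$. Since $\ib=\{1,\dots,N\}$, by definition $M_{\iib}(u)=\qdet T(u)$, so $\Bq_N(u)=q_1\cdots q_N\>\qdet T(u)$; it therefore suffices to show that $\pho(\qdet T(u))$ acts on $\V$ as multiplication by $\prod_{a=1}^n(u-z_a+1)/(u-z_a)$.

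First, I would rewrite the defining formula \Ref{Lpm} as $L(u)=R^{(0,n)}(u-z_n)\cdots R^{(0,1)}(u-z_1)$ with $R^{(0,a)}(v)=v+P^{(0,a)}$. After dividing by $\prod_{a=1}^n(u-z_a)$, each factor $R^{(0,a)}(u-z_a)/(u-z_a)$ is precisely the matrix of generating series $T_{\ij}(u)$ for the evaluation module $\C^N(z_a)$ (with $z_a$ treated as a formal parameter), i.e.\ the image of $T_{\ij}(u)$ under $\epe\cdot\rho_{z_a}$. The ordered product structure of $L(u)$ then matches exactly the iterated coproduct, so $\V$, as a $\Yn$-module over $\C[\zzz]$, is obtained from $\C^N(z_n)\lox\C^N(z_1)$ by treating the $z_a$ as formal variables.

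Second, I would invoke two standard facts about the quantum determinant in $\Yn$: it is group-like under the coproduct, $\Dl(\qdet T(u))=\qdet T(u)\ox\qdet T(u)$, and on a single evaluation module $\C^N(b)$ it acts as the scalar function $(u-b+1)/(u-b)$. Both statements are classical (see, e.g., Molev's book on Yangians). Combining them and specializing $b\leadsto z_a$ at each tensor factor yields
\be
\pho(\qdet T(u))\,=\,\prod_{a=1}^n\frac{u-z_a+1}{u-z_a}\>,
\ee
and multiplication by $q_1\cdots q_N$ then produces formula \Ref{BN}.

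Since the argument is essentially an invocation of well-established Yangian structure results, there is no real obstacle; the only care required is bookkeeping, namely checking that the order of the factors in $L(u)$ matches the order of the evaluation modules in the iterated coproduct so that the two factorizations agree on the nose.
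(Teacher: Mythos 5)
Your proof is correct; in fact the paper states this lemma with no proof at all (it is given as a standard fact with a \qed), and your argument — writing $\Bq_N(u)=q_1\cdots q_N\,\qdet T(u)$, recognizing $L(u)\prod_a(u-z_a)^{-1}$ as the iterated-coproduct matrix of a tensor product of evaluation modules with formal parameters $z_a$, and then using that $\qdet T(u)$ is group-like and acts on $\C^N(b)$ as $(u-b+1)/(u-b)$ — is exactly the standard reasoning the authors leave implicit. One small bookkeeping remark: with the paper's coproduct $\Dl\colon T_{\ij}(u)\mapsto\sum_k T_{\kj}(u)\ox T_{\ik}(u)$, the ordered product defining $L(u)$ corresponds to $\C^N(z_1)\lox\C^N(z_n)$ rather than the reversed order you wrote, but since the quantum determinant acts on each evaluation factor by a scalar and these scalars commute, the ordering is immaterial for the formula \Ref{BN}.
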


\begin{cor}
The Bethe algebra \,$\Bcq(\V)$ contains the algebra of scalar operators of
multiplication by elements of \;$\Czs$.
\qed

\end{cor}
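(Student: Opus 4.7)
My plan is to leverage the preceding Lemma: since the transfer matrix $\Bq_N(u)\in\Bcq[[u^{-1}]]$ acts on $\V$ as the scalar function \Ref{BN}, each coefficient $\Bq_{N,s}$ lies in $\Bcq$ and acts on $\V$ as multiplication by a specific element of $\Czs$. Writing $\Bq_N(u)\>=\,\si_N(\kkk)\>+\,\sum_{s\geq 1} R_s\>u^{-s}$ with $R_s\in\Czs$, it suffices to prove that the polynomials $R_s$, $s\geq 1$, generate $\Czs$ as a $\C$-algebra.

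To that end, I rewrite the rational function in \Ref{BN} as $\si_N(\kkk)\prod_{i=1}^n\bigl(1 + (u-z_i)^{-1}\bigr)$ and expand the product as $\sum_{I\subseteq\{1,\dots,n\}}\prod_{i\in I}(u-z_i)^{-1}$. Using the geometric series $(u-z_i)^{-1} = \sum_{k\geq 0} z_i^k\>u^{-k-1}$, the $|I|=1$ summands contribute to the coefficient of $u^{-s}$ exactly the power sum $p_{s-1}(\zz) = \sum_{i=1}^n z_i^{s-1}$, while each summand with $|I|=k\geq 2$ contributes a symmetric polynomial in $\zz$ of total $z$-degree $s - k \leq s - 2$. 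Hence $R_s\>=\,\si_N(\kkk)\bigl(p_{s-1}(\zz) + r_s(\zz)\bigr)$ with $r_s\in\Czs$ satisfying $\deg r_s \leq s-2$.

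A straightforward induction on $s$ then shows that the subalgebra of $\Czs$ generated by $R_2,\dots,R_{n+1}$ contains the power sums $p_1,\dots,p_n$, which generate $\Czs$ over $\C$ by Newton's identities. The only non-routine step is confirming the degree bound on $r_s$ from the $|I|\geq 2$ summands; once that bound is in hand, the triangular structure makes the induction immediate and the corollary follows. No additional ingredients beyond the previous Lemma, symmetric function theory, and the fact that $\si_N(\kkk)=q_1\cdots q_N\neq 0$ are needed.
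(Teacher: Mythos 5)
Your proposal is correct and follows the same route as the paper: the corollary is stated there with no further argument beyond the preceding Lemma, the point being exactly that the coefficients of the scalar series \Ref{BN} are symmetric polynomials generating $\Czs$, and since $q_1\cdots q_N\ne 0$ the images of the $\Bq_{N,s}$ give multiplication by these generators. Your expansion into subsets and the triangularity with respect to the power sums $p_{s-1}$ is a valid and complete way of making this generation statement explicit.
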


\subsection{Universal difference operator}
\label{univ diff oper}

Define the operator $\tau$ acting on functions of $u$ as $(\tau f)(u)=f(u-1)$.
Following \cite{T}, for $\qq=(q_1\lc q_N)\in(\C^\times)^N$ we introduce
the universal difference operator $\D^\qq(u,\tau)$ by the formula
\vvn-.3>
\beq
\label{UDO}
\D^\qq(u,\tau)\,=\,1 + \sum_{k=1}^N\,(-1)^kB_k^\qq(u)\,\tau^k\>.
\vv-.4>
\eeq
For $\qq=\bs 1$, we write
\vvn-.2>
\beq
\label{formula for Se}
\D^{\qq=\bs 1}(u,\tau)\tau^{-N}=\,
\sum_{k=0}^N \, (-1)^k\, C_k(u) \,(\tau^{-1} - 1)^{N-k},\qquad
C_k(u) = \sum_{s=0}^\infty C_{k,s} u^{-s},
\eeq
where $C_{k,s}\in\Bo$. The Bethe algebra $\Bo$ preserves $(\V^S)_\bla^\sing$
and we may consider the images $\Cl_{k,s}$ of the elements $C_{k,s}$
in $\Bo\bigl((\V^S)^\sing_\bla\bigr)$.

\begin{thm}[{\cite[Theorem 3.7]{MTV2}}]
\label{thm Q=1}
The following statements hold.
\begin{enumerate}\itemsep=4pt
\item[(i)]
$C_0(u) = 1$\>.
\item[(ii)]
$C_{k,s}=0$ \,for all \,$k=1\lc N$ and \,$s<k$\>.
\item[(iii)]
$\Cl_{1,1}\lc\Cl_{N,N}$ are scalar operators, and for a variable $x$, we have
\be
\sum_{k=0}^N\,(-1)^k\,\Cl_{k,k}\prod_{j=0}^{N-k-1}\!(x-j)\,=\,
\prod_{s=1}^N\,(x-\la_s-N+s)\,.
\vv-1.3>
\ee
\qed
\end{enumerate}
\end{thm}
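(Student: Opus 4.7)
The plan is to derive (i)--(iii) from the algebraic relation between the generating series $B_k^{\bs 1}(u)$ and $C_k(u)$ forced by \Ref{formula for Se}. Expanding $(\tau^{-1}-1)^{N-k}$ by the binomial theorem and matching coefficients of $\tau^{-j}$ on the two sides yields the triangular linear system
\be
B_m^{\bs 1}(u)\,=\,\sum_{k=0}^m\,\binom{N-k}{N-m}\,C_k(u)\,,\qquad m=0,\dots,N,
\ee
with $B_0^{\bs 1}(u)=1$, which inverts uniquely to express each $C_k(u)$ as a $\Z$-linear combination of $B_0^{\bs 1}(u),\dots,B_k^{\bs 1}(u)$. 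The $m=0$ case immediately gives (i).

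For (ii), I would pass to an eigenbasis of $\Bo$. On any common eigenvector $v$ on which $\D^{\bs 1}(u,\tau)v=\prod_{s=1}^N(1-f_s(u)\tau)\,v$ with $f_s(\infty)=1$, direct substitution into \Ref{formula for Se} gives $C_k(u)v=e_k\bigl(f_1(u)-1,\dots,f_N(u)-1\bigr)\,v$, which is manifestly $O(u^{-k})$ because each $f_s(u)-1$ is $O(u^{-1})$. Since the Bethe ansatz of Section \ref{Sec BA q 1} produces such a factorization on an open dense subset of fibers of the discrete Wronski map, the vanishing $C_{k,s}=0$ for $s<k$ propagates by continuity to all of $(\V^S)_\bla$. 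An independent alternative uses the Yangian filtration $B_{k,s}^{\bs 1}\in F_s\Yn$ combined with explicit low-order expansion of the quantum minors $M_{\iib}(u)$; the inverse binomial transform reduces (ii) to combinatorial binomial identities.

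For (iii), the scalar property of $\Cl_{k,k}$ follows from the filtration: the image of $C_{k,k}$ in $F_k\Yn/F_{k-1}\Yn$ lies in $U(\gln)\subset\on{gr}\Yn\cong U(\gln[t])$, and since singular vectors are annihilated by the strictly-upper nilpotent subalgebra of $\gln$, this action on $(\V^S)^\sing_\bla$ reduces to a scalar polynomial in $\la_1,\dots,\la_N$. To pin this polynomial down, I would first verify (iii) on the Yangian highest-weight vector $v_1^{\otimes n}\in V(\bb)$ (via Proposition \ref{VSVb}, corresponding to $\bla=(n,0,\dots,0)$): the triangularity $T_{\ij}(u)v_1^{\otimes n}=0$ for $i>j$ forces the explicit factorization $\D^{\bs 1}(u,\tau)\>v_1^{\otimes n}=(1-\tau)^{N-1}\bigl(1-\la_1(u)\tau\bigr)v_1^{\otimes n}$ with $\la_1(u)=\prod_{a=1}^n\frac{u-b_a+1}{u-b_a}$, from which a short computation confirms (iii) in this case. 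The general case then follows by running the same factorization argument on a generic Bethe eigenvector in $(\V^S)^\sing_\bla$, whose Bethe-ansatz eigenvalues $f_s(u)$ have leading-in-$u^{-1}$ data controlled by the shifted weights $\la_s+N-s$ through the discrete Wronski map.

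The main obstacle is this last step---establishing the Bethe factorization on a generic eigenvector and matching its top symbol to $\prod_s(x-\la_s-N+s)$ expanded in the falling-factorial basis $\prod_{j=0}^{N-k-1}(x-j)$. This is the direct Yangian parallel of the $\gln[t]$-calculation in \cite[Theorem 3.7]{MTV2} and is most cleanly carried out using the explicit Bethe eigenvalues provided in Section \ref{Sec BA q 1}.
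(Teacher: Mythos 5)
First, note that this paper does not prove Theorem \ref{thm Q=1} at all: it is imported verbatim from \cite[Theorem 3.7]{MTV2}, where the proof is algebraic (working with the quantum minors and the row-determinant realization of the universal difference operator, at the level of the Yangian itself) and does not use completeness of the Bethe ansatz. So your proposal is not a variant of an argument in this paper but an attempt at an independent proof, and as such it has genuine gaps beyond the easy part (i), whose binomial-triangularity argument is fine.

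The main problems are these. (1) Parts of the statement live in the algebra: $C_{k,s}$ are elements of $\Bo\subset\Yn$, and (ii) asserts $C_{k,s}=0$ in $\Yn$. An eigenvector-plus-continuity argument on the fixed modules $(\V^S)_\bla$ (equivalently on $V(\bb)$ for generic $\bb$) can only show that the \emph{operators} vanish there; to conclude vanishing in $\Yn$ you would need a faithfulness statement for the whole family of these modules over all $n$, which you neither state nor prove. (2) Even at the operator level, for $\qq=\bs 1$ the completeness theorem (Theorem \ref{thm on compl q=1}) gives a basis of Bethe vectors only in the singular subspace $V(\bb)^\sing_\bla$, not in the full weight space; your claim that the vanishing ``propagates by continuity to all of $(\V^S)_\bla$'' needs the extra step that $\Bo$ commutes with $U(\glN)$ and that $V(\bb)$ is a semisimple $\glN$-module, so that vanishing on singular vectors implies vanishing everywhere. (3) Your mechanism for scalarity in (iii) is not correct as written: the degree-$k$ graded component of $\on{gr}\Yn\cong U(\glnt)$ is not $U(\gln)$, so the image of $C_{k,k}$ in $F_k\Yn/F_{k-1}\Yn$ cannot ``lie in $U(\gln)$'' for $k\ge1$; what you would actually need is the much stronger claim $C_{k,k}\in F_0\Yn=U(\gln)$ (true for $k=1,2$ only after nontrivial cancellations among quantum-minor coefficients), and the filtration bound $B^{\qq}_{k,s}\in F_s\Yn$ quoted from the paper comes nowhere near this. (4) Finally, the substantive content of (iii) --- that the common eigenvalue is the monic polynomial with roots $\la_s+N-s$ --- is exactly the indicial computation you defer as ``the main obstacle'': one must apply $\Dt(u,\tau)\tau^{-N}$ to the polynomial kernel elements $f_k(u)$ of degrees $\la_k+N-k$ supplied by Theorem \ref{thm on fund operator q=1}, extract the leading coefficient in $u$ (using (ii) to know $C_k(u)=C_{k,k}u^{-k}+O(u^{-k-1})$ and that $(\tau^{-1}-1)$ lowers the degree by exactly one with falling-factorial leading coefficient), and then invoke completeness and Zariski density in $\zb$ together with $\Czs$-linearity to pass from generic fibers to all of $(\V^S)^\sing_\bla$. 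Since this step is precisely what you leave out, the proposal does not yet constitute a proof of (ii) and (iii).
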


\begin{rem}
Given an $N\times N$ matrix $A$ with possibly noncommuting entries $a_{i,j}$,
we define its row determinant to be
\vvn-.2>
\be
\rdet A\,=
\sum_{\;\si\in S_N\!} (-1)^\si\,a_{1,\si(1)}a_{2,\si(2)}\dots a_{N,\si(N)}\,.
\vv-.1>
\ee
The universal difference operator can be presented as a row determinant
of a suitable matrix, see for example \cite{T, MTV1, MTV2}.
\end{rem}

\section{Spaces of quasi-exponentials}
\label{sqe}

\subsection{Spaces of quasi-exponentials}
\label{Spaces of quasi-exponentials}

Let $\qq=(q_1\lc q_N)\in(\C^\times)^N$ be
\vv.1>
a sequence of distinct numbers. Let $\bla \in\Z^N_{\geq 0}$, $|\bla|=n$.
\vv.1>
Let $\Omql$ be the affine $n$-dimensional space with coordinates
$p_{i,j},\,i=1\lc N,\,j=1\lc\la_i$.

Introduce \,$f_i(u)= q_i^u p_i(u)$, $i=1\lc N$, where
\vvn.3>
\beq
\label{basis}
p_i(u)\,=\,u^{\la_i} + p_{i,1}u^{\la_i-1} + \dots +p_{i,\la_i}\,.
\vv.3>
\eeq
We identify points \>$X\in\Omql$ with $N$-dimensional complex
vector spaces generated by quasi-exponentials
\vvn.2>
\beq
\label{basis X}
f_i(u,X)\,=\,q_i^u(u^{\la_i} + p_{i,1}(X)u^{\la_i-1} + \dots
+p_{i,\la_i}(X))\,, \qquad i=1\lc N.
\eeq

\vsk.2>
Denote by $\Oql$ the algebra of regular functions on
$\Omql$. It is the polynomial algebra in the variables $p_{i,j}$.
The algebra $\Oql$ has the degree function such that
$\deg p_{i,j}=j$ for all $i,j$. We consider the
the increasing filtration $F_0\Oql \subset F_1\Oql \subset\dots\subset \Oql$, where
$F_s\Oql$ consists of elements of degree $\leq s$.
The graded character of $\Oql$ is
\be
\on{ch}_{\>\Oql}(t)\,=\,{\prod_{i=1}^N\frac1{(\<\>t\<\>)_{\la_i}}}\;.
\ee

\subsection{Another realization of $\Oql$}
For arbitrary functions $g_1(u)\lc g_N(u)$, we introduce the discrete Wronskian
by the formula
\vvn-.1>
\be
\Wr\>(g_1(u)\lc g_N(u))\,=\,
\det\left(\begin{matrix} g_1(u) & g_1(u-1) &\dots & g_1(u-N+1) \\
g_2(u) & g_2(u-1) &\dots & g_2(u-N+1)
\\ \dots & \dots &\dots & \dots\\
g_N(u) & g_N(u-1) &\dots & g_N(u-N+1)
\end{matrix}\right).
\ee
Let $f_i(u)$, $i=1\lc N$, be the functions given by \Ref{basis}. We have
\vvn.2>
\beq
\label{Wr coef}
\Wr\>(f_1(u-1)\lc f_N(u-1))\,=\,
\prod_{i=1}^N q_i^{u-1}\!\prod_{1\le i<j\le N}(q^{-1}_j-q^{-1}_i)
\ \Bigl(u^n+\sum_{s=1}^n \>(-1)^s\Sig_s\,u^{n-s}\Bigr)\,,\kern-1em
\eeq
where $\Sig_1\lc\Sig_n$ are elements of $\Oql$.
Define the difference operator $\D^{\Oql}(u,\tau)$ by
\vvn.2>
\beq
\label{DOla}
\D^{\Oql}(u,\tau)\,=\,\frac{1}{\Wr\>(f_1(u-1)\lc f_N(u-1))}\,\rdet
\left(\begin{matrix} f_1(u) & f_1(u-1) &\dots & f_1(u-N) \\
f_2(u) & f_2(u-1) &\dots & f_2(u-N) \\ \dots & \dots &\dots & \dots \\
1 & \tau &\dots & \tau^N
\end{matrix}\right).\kern-1em
\eeq
It is a difference operator in the variable $u$, whose coefficients
are formal power series in $u^{-1}$ with coefficients in $\Oql$,
\beq
\label{DO}
\D^{\Oql}(u,\tau)\,=\, 1 +\sum_{k=1}^N\,(-1)^k\Fql_k(u)\,\tau^k\>,
\qquad
\Fql_k(u)\,=\,\si_k(q_1\lc q_N) +
\sum_{s=1}^\infty \Fql_{k,s}\>u^{-s}\,,
\eeq
and $\Fql_{k,s}\in\Ol$\>, \,$k=1\lc N$, \,$s>0$\>.
In particular, we have
\beq
\label{FN}
\Fql_N(u)\,=\,q_1\dots q_N \,
\frac{(u+1)^n+\sum_{s=1}^n \>(-1)^s\Sig_s\,(u+1)^{n-s}}
{u^n+\sum_{s=1}^n \>(-1)^s\Sig_s\,u^{n-s}},
\eeq
cf.~\Ref{BN}.

\begin{lem}
\label{coef alg}
The functions $F_{k,s}^{\qq,\bla}\in\Oql$, \,$k=1\lc N$, $s>0$\>,
generate the algebra \,$\Oql$.
\end{lem}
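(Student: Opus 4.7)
The plan is a triangular recursion: for each \,$j\ge1$, extract \,$p_{i,j}$ (for all $i$ with \,$\la_i\ge j$) from the coefficients \,$F_{k,j+1}^{\qq,\bla}$\, together with previously recovered quantities.

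As the starting point, I would use the annihilation property: replacing the bottom row $(1,\tau\lc\tau^N)$ of the matrix in \Ref{DOla} by $(f_i(u),f_i(u-1)\lc f_i(u-N))$ produces a matrix with two equal rows, so the row determinant vanishes. This gives
\be
\sum_{k=0}^N(-1)^k\>F_k^{\qq,\bla}(u)\>f_i(u-k)\>=\>0,\quad F_0^{\qq,\bla}\<=\<1,\qquad i=1\lc N,
\ee
and writing \,$f_i(u)\<=\<q_i^u p_i(u)$\, and dividing by $q_i^u$ turns this into a polynomial identity in \,$u^{-1}$\, with coefficients in \,$\Oql$.

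Expanding as a power series in $u$ and equating the coefficient of \,$u^{\la_i-s}$ \,to zero, for each \,$s\ge1$\, one obtains a relation
\be
\sum_{k=1}^N(-q_i^{-1})^k\>F_{k,s}^{\qq,\bla}\,=\,c_{i,s}\>p_{i,s-1}\,+\,R_{i,s}\,,
\ee
where \,$R_{i,s}$\, is a polynomial in \,$p_{i,j}$ for \,$j<s-1$\, and in \,$F_{k',r'}^{\qq,\bla}$ for \,$r'<s$. Using the identities \,$\sum_k(-q_i^{-1})^k\sigma_k(\qq)\<=\<0$ \,and \,$\sum_k(-q_i^{-1})^k\>k\<\>\sigma_k(\qq)\<=\<-\prod_{l\ne i}(1-q_l/q_i)$\, (obtained by expanding \,$\prod_j(1-q_jx)$\, and its derivative at $x\<=\<q_i^{-1}$), a direct calculation yields
\be
c_{i,s}\,=\,(s-1)\prod_{l\ne i}(1-q_l/q_i)\>,
\ee
which is nonzero for \,$s\ge2$\, because the \,$q_l$\, are pairwise distinct.

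For each fixed \,$s\ge2$, the \,$N$ equations above form a Vandermonde system; reading it in the unknowns \,$p_{i,s-1}$\, rather than in the \,$F_{k,s}^{\qq,\bla}$\, and inverting, one obtains each \,$p_{i,s-1}$ (for $s-1\le\la_i$) as a polynomial in \,$\{F_{k,s}^{\qq,\bla}\}_{k=1}^N$\, and the quantities already recovered. Induction on $s$ then expresses every generator \,$p_{i,j}$\, of \,$\Oql$\, as a polynomial in the \,$F_{k,s}^{\qq,\bla}$, proving the lemma. The main technical obstacle is the explicit computation of \,$c_{i,s}$: it requires carefully tracking the $p_{i,s-1}$-contributions from \,$\sigma_k p_{i,s}^{(k)}$\, and from \,$F_{k,1}p_{i,s-1}^{(k)}$ (where $p_{i,j}^{(k)}$ denotes the $u^{\la_i-j}$-coefficient of $p_i(u-k)$) and combining them via the two Vandermonde-type identities above; once $c_{i,s}\<\ne\<0$ is established, the induction is routine.
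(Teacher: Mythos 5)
Your proposal is correct and follows essentially the same route as the paper's proof: both expand the annihilation identity $\D^{\Oql}(u,\tau)f_i(u)=0$ (your repeated-row argument for it is the standard one) as a series in $u^{-1}$ and solve recursively for the $p_{i,j}$, the key point being that the new unknown enters with a nonzero scalar coefficient, which the paper records as $-j\prod_{k\ne i}(1-q_k/q_i)$ with $j=s-1$ without computing it and you compute explicitly as $c_{i,s}=(s-1)\prod_{l\ne i}(1-q_l/q_i)$. Your value is indeed the right one (obtaining it requires, as you indicate, absorbing the $\Fql_{k,1}\,p_{i,s-1}$ cross-terms via the order-$u^{\la_i-1}$ instance of the same identity, which gives $\sum_k(-q_i^{-1})^k\Fql_{k,1}=-\la_i\prod_{l\ne i}(1-q_l/q_i)$), and the final Vandermonde-inversion remark is unnecessary, since each equation already expresses $p_{i,s-1}$ directly in terms of the $\Fql_{k,s}$ and previously recovered quantities.
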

\begin{proof}
The coefficient of $u^{\la_i-j-1}$ of the series
$q_i^{u}\,\D^{\Oql}f_i(u)$ has the form
\vvn.3>
\be
\label{fij}
-\>j\,p_{i,j}\prod_{\satop{k=1}{k\ne i}}^N\,(1-q_k/q_i)\,+\,
\sum_{l=1}^N\,\sum_{r=0}^{j+1}\,\sum_{s=0}^{j-1}\,
c_{ijlrs}\,\Fql_{l,r}\,p_{i,s}\,,
\vv.1>
\ee
where \,$c_{ijlrs}$ are some numbers, $\Fql_{l,0}=\si_l(q_1\lc q_N)$ and
$p_{i,0}=1$. Since $\D^{\Oql}f_i(u)=0$, we can express recursively the elements
$p_{i,j}$ via the elements $\Fql_{l,r}$ starting with $j=1$ and then increasing
the second index $j$.
\end{proof}

\subsection{Discrete Wronski map $\pi^\qq_\bla$}
\label{wronski}
Consider \,$\C^n$ with coordinates $\si_1\lc\si_n$.
Introduce the {discrete Wronski map\/} $\pi^\qq_\bla:\Omql\to\C^n$ as follows.
Let $X$ be a point of $\Omql$. Define
\vvn.3>
\beq
\label{wronsk of X}
\Wr_X(u)\,=\,\Wr\>\bigl(f_1(u-1,X)\lc f_N(u-1,X)\bigr)\,,
\vv.3>
\eeq
where $f_1(u,X)\lc f_N(u,X)$ are given by \Ref{basis X}. Let
\be
\Wr_X(u)\,=\prod_{i=1}^N q_i^{u-1}\!\prod_{1\le i<j\le N}(q^{-1}_j-q^{-1}_i)
\ \Bigl(u^n+\sum_{s=1}^n \>(-1)^sa_s\,u^{n-s}\Bigr)\,.
\ee
We set $\pi^\qq_\bla: X\mapsto (a_1\lc a_n)$.

\vsk.2>
The discrete Wronski map is a finite algebraic map, see
\cite[Proposition 3.1]{MTV5}. It defines an injective algebra homomorphism
\vvn.2>
\be
(\pi^\qq_\bla)^* :\,\C[\si_1\dots,\si_n] \to \O^\qq_\bla ,\qquad
\si_s \mapsto \Si_s\,,
\ee
which gives a $\C[\si_1\dots,\si_n]$-module structure on $\O^\qq_\bla$.

\vsk.2>
For $\aa\in\C^n$, let $I^\Oc_\lba$ be the ideal in $\Oql\>$ generated
by the elements $\Sig_s-a_s$, $s=1\lc n$, where $\Sig_1\lc\Sig_n$ are defined
by \Ref{Wr coef}. The quotient algebra
\vvn.3>
\beq
\label{Olaa}
\Oq_\lba\>=\,\Oql\>\big/I^\O_\lba
\vv.3>
\eeq
is the scheme-theoretic fiber of the discrete Wronski map $\pi^\qq_\bla$.

\subsection{First main result}
Let \,$\Ac$ \,be a commutative algebra. The algebra \,$\Ac$ \,considered as
a module over itself is called the regular representation of \,$\Ac$\>.

\begin{thm}
\label{1 thm}
Assume that $\qq\in(\C^\times)^N$ has distinct coordinates.
Denote $v_\bla = \sum_{I\in\Il}v_I$. Then
\begin{enumerate}
\item[(i)]
The map $\mu^\qq_\bla\!:\Fql_{k,s} \mapsto \Bql_{k,s}$,
\,$k=1\lc N$, \,$s>0$, extends uniquely to an isomorphism
$\mu^\qq_\bla\!: \O^\qq_\bla\<\to\Bcq\bigl((\V^S)_\bla\bigr)$
of filtered algebras. The isomorphism $\mu^\qq_\bla$ becomes an isomorphism of
the $\C[\si_1\lc\si_n]$-module \,$\O^\qq_\bla$ and the
$\Czs$-module \,$\Bcq\bigl((\V^S)_\bla\bigr)$ if we identify the algebras
$\C[\si_1\lc\si_n]$ and $\Czs$ by the map
$\si_s\mapsto \si_s(\bs z)$, \,$s=1\lc n$.
\vsk.2>
\item[(ii)]
The map \;$\nu^\qq_\bla:\O^\qq_\bla \to\>(\V^S)_\bla$\,,
\,$f\mapsto\mu^\qq_\bla(f)\,v_\bla$\>,
is an isomorphism of filtered vector spaces identifying the
\;$\Bcq\bigl((\V^S)_\bla\bigr)$-module \;$(\V^S)_\bla$ and the regular
representation of \,$\O^\qq_\bla$.
\end{enumerate}
\end{thm}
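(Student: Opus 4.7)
The plan is to construct $\mu^\qq_\bla$ as a surjective algebra map via the Bethe ansatz at generic fibers of $\on{Spec}\Czs$, use cyclicity of $v_\bla$ to show that $\nu^\qq_\bla$ is an isomorphism of free $\Czs$-modules by comparing ranks, and then deduce injectivity of $\mu^\qq_\bla$ from that of $\nu^\qq_\bla$. The conceptual principle guiding the argument is that for each $\aa\in\C^n$ with reduced scheme-theoretic fiber $\on{Spec}\Oq_\lba$, the joint spectrum of $\Bcq$ on $(\V^S)_\bla/I_\aa(\V^S)_\bla\simeq V(\bb)_\bla$ should be canonically identified with the set of points of that fiber, and the joint eigenvalues with the coefficients of $\D^{\Oql}(u,\tau)$ evaluated at those points.

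To construct $\mu^\qq_\bla$, I would use that by Lemma \ref{coef alg} the elements $\Fql_{k,s}$ generate $\Oql$ as an algebra, so the map is unique if it exists, and I only need to show that every polynomial relation $R(\Fql_{k,s})=0$ in $\Oql$ is satisfied by the $\Bql_{k,s}$ in $\End\bigl((\V^S)_\bla\bigr)$. The input from Bethe ansatz (to be supplied by Section \ref{Sec BA q ne 1}) is that for a Zariski-dense set of $\aa\in\C^n$, the algebra $\Bcq$ acts on $V(\bb)_\bla$ semisimply with simple joint spectrum, and on each common eigenvector the coefficients of $\D^\qq(u,\tau)$ act by the corresponding coefficients of $\D^{\Oql}(u,\tau)$ evaluated at a point of $\on{Spec}\Oq_\lba$. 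Consequently $R(\Bql_{k,s})$ is annihilated by the Bethe basis and hence vanishes modulo $I_\aa$ for a Zariski-dense set of $\aa$. Since $(\V^S)_\bla$ is a free $\Czs$-module by Lemma \ref{VSfree}, the subalgebra $\Bcq\bigl((\V^S)_\bla\bigr)\subset\End\bigl((\V^S)_\bla\bigr)$ is $\Czs$-torsion-free, so the relation holds globally. This produces the algebra surjection $\mu^\qq_\bla$; compatibility with the $\Czs$-module structures under $\si_s\leftrightarrow\si_s(\bs z)$ follows by comparing the leading coefficients in \Ref{FN} and \Ref{BN}.

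For part (ii) I consider $\nu^\qq_\bla(f)=\mu^\qq_\bla(f)v_\bla$. By Lemmas \ref{VSfree} and \ref{lem on char of VSl}, both $\Oql$ and $(\V^S)_\bla$ are free $\Czs$-modules of equal rank with matching graded characters $\prod_i 1/(\<\>t\<\>)_{\la_i}$. Surjectivity of $\nu^\qq_\bla$ reduces, via $\Czs$-freeness, to surjectivity at a Zariski-dense set of fibers, which is the cyclicity statement for $v_\bla$ in $V(\bb)_\bla$ over $\Bcq$. For generic $\bb$ this cyclicity is the standard fact that $v_\bla=\sum_{I\in\Il}v_I$ has nonzero projection onto every common eigenline of the simultaneously diagonalizable Bethe algebra action, a property of Bethe vectors constructed from the highest-weight reference state $\voxn$. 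Hence $\nu^\qq_\bla$ is a surjection of free $\Czs$-modules of equal finite rank, and so an isomorphism of filtered $\Czs$-modules (filtered compatibility being immediate from Lemma \ref{lem filt} and $\deg p_{i,j}=j$). It follows that $\mu^\qq_\bla$ is also injective, completing (i), and $\nu^\qq_\bla$ identifies $(\V^S)_\bla$ with the regular representation of $\Oql$.

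The main obstacle is the Bethe-ansatz input underlying the previous paragraphs: completeness of the Bethe ansatz for the $\qq$-twisted transfer matrices on a generic tensor product $V(\bb)_\bla$, together with the identification of joint eigenvalues with the discrete Wronskian data. The hypothesis that the coordinates of $\qq$ are distinct enters essentially, as it guarantees that the leading coefficient in \Ref{Wr coef} is nonzero, so that $\D^{\Oql}(u,\tau)$ is genuinely a degree-$N$ difference operator, and it controls the construction of Bethe vectors for the twisted transfer matrices. Once this analytic input is in place, the remainder of the argument is a freeness-and-character packaging around it.
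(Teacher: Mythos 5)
Your part (i) (well-definedness of $\mu^\qq_\bla$, surjectivity, module and filtration compatibility) follows the paper's route: relations among the $\Fql_{k,s}$ are transported to the $\Bql_{k,s}$ by evaluating at the Bethe eigenbasis over a dense set of $\aa$ and using $\Czs$-torsion-freeness, exactly as in the paper. But the rest of your argument rests on a claim that is not available: that for generic $\bb$ the vector $v_\bla=\sum_{I\in\Il}v_I$ has nonzero projection onto every common eigenline of $\Bcq$ on $V(\bb)_\bla$, i.e.\ that $v_\bla$ is a cyclic vector for the Bethe algebra on each generic fiber. This is not a quotable ``standard fact''; it is essentially equivalent to the cyclicity statement that the theorem itself delivers (the paper obtains it as a consequence of Theorem \ref{1 thm}, not as an input), and nothing in Sections \ref{Sec BA q ne 1}--\ref{Sec BA q 1} asserts a nonvanishing of the sum of components of a Bethe vector. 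One could try to repair it by the degeneration $b_s=sy$, $y\to\infty$ used in Lemma \ref{lem BA}, where the eigenlines tend to the coordinate lines $\C v_I$, but that argument is not made, and without it your proofs of surjectivity of $\nu^\qq_\bla$, and hence of injectivity of $\mu^\qq_\bla$, are unsupported.

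There is a second, independent flaw: the reduction ``surjectivity of $\nu^\qq_\bla$ reduces, via $\Czs$-freeness, to surjectivity at a Zariski-dense set of fibers'' is false. A map of free $\Czs$-modules of equal finite rank can be surjective modulo $I_\aa$ for all $\aa$ in a dense open set and still fail to be surjective (already multiplication by $z$ on $\C[z]$ gives a counterexample); fiberwise surjectivity on a dense set only yields injectivity (nonvanishing of the determinant), and one must then invoke the filtration together with the equality of graded characters to upgrade injectivity to bijectivity. This is precisely how the paper argues, and it does so without any cyclicity input: injectivity of $\mu^\qq_\bla$ is proved directly from the density in $\Omql$ of the points supplied by Theorems \ref{thm on fund operator} and \ref{thm on completeness} (a nonzero $R$ in $\Oql$ is nonzero at some Bethe point, hence $R(\Bql_{k,s})\ne0$), and injectivity of $\nu^\qq_\bla$ is proved by observing that its kernel is an ideal of $\Oql$ meeting $\C[\si_1\lc\si_n]$ trivially (no nonzero symmetric polynomial annihilates $v_\bla$ in the free module $(\V^S)_\bla$), hence is zero since $\Oql$ is a domain finite over $\C[\si_1\lc\si_n]$; equality of the graded characters then gives that the filtered injection $\nu^\qq_\bla$ is an isomorphism. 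You should replace your cyclicity-based surjectivity step by this ideal-theoretic argument, or else supply a genuine proof of the fiberwise cyclicity and fix the globalization step.
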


The theorem is proved in Section \ref{proof thm main 1}.

\begin{rem}
Theorem \ref{1 thm} was announced in \cite[Theorem 6.3]{GRTV}. To indicate
the correspondence of notation, we point out that formula (6.1) in \cite{GRTV}
is a counterpart of formula \Ref{Wr coef} in this paper with functions
\,$g_i(u)$ in \cite{GRTV} being equal to \,$q_i\>h^{\la_i}\<f_i(-1+u/h)$ here.
Also, formulae (6.3), (6.4) in \cite {GRTV} correspond to formulae \Ref{DOla},
\Ref{DO} in this paper, and the algebra \,$\Hck_\bla$ in \cite{GRTV} is
a counterpart of the algebra \,$\Oql$ here.
\end{rem}

Assume that the complex numbers \,$b_1\lc b_n$ are such that \,$b_i\ne b_j+1$
for $i>j$. Consider the tensor product $V(\bb)=\C^N(b_1)\lox\C^N(b_n)$ of
evaluation $\Yn$-modules and its weight subspace $V(\bb)_\bla$. Introduce
the numbers $\aa=(a_1\lc a_n)$ by the formula $a_s=\si_s(b_1\lc b_n)$,
cf.~\Ref{ab}.

\begin{cor}
\label{1 prime cor}
Assume that $\qq\in({\Bbb R}^\times)^N$ has distinct coordinates.
Let \,$b_1,\dots,b_n$ be real and such that \,$|\>b_i-b_j|>1$ for all $i\ne j$.
Then the algebra \,$\Bcq\bigl(V(\bb)_\bla\bigr)$ has simple spectrum.
\end{cor}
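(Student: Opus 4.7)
The plan is to derive the corollary from Theorem \ref{1 thm} by reducing simple spectrum to reducedness of the scheme-theoretic fiber of the discrete Wronski map, and then invoking a reality result in the separated regime.

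First, let $\aa=(a_1\lc a_n)$ be defined by \Ref{ab}, so $a_s=\si_s(b_1\lc b_n)$. The hypothesis $|b_i-b_j|>1$ for $i\ne j$ implies in particular $b_i\ne b_j+1$ whenever $i>j$, so by Proposition \ref{VSVb} there is a $\Yn$-module isomorphism $V(\bb)\cong\V^S/I_\aa\V^S$. Passing to the weight-$\bla$ subspace and to the induced action of the Bethe subalgebra, this becomes a $\Bcq$-module isomorphism
\[
V(\bb)_\bla\;\cong\;(\V^S)_\bla\big/I_\aa(\V^S)_\bla.
\]
By part (ii) of Theorem \ref{1 thm}, the isomorphism $\nu^\qq_\bla$ identifies $(\V^S)_\bla$ with the regular representation of $\O^\qq_\bla$ and, by part (i) applied to $\si_s(\zb)-a_s$, carries the submodule $I_\aa(\V^S)_\bla$ to the ideal $I^\O_\lba\cdot\O^\qq_\bla$. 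Hence $V(\bb)_\bla$ is the regular representation of the fiber algebra $\Oq_\lba$, and $\Bcq\bigl(V(\bb)_\bla\bigr)\cong\Oq_\lba$ as filtered algebras.

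The regular representation of a finite-dimensional commutative $\C$-algebra $\Ac$ has simple joint spectrum if and only if $\Ac$ is reduced, i.e.\ isomorphic to $\C^{\dim\Ac}$. It therefore suffices to show that $\Oq_\lba$, the scheme-theoretic fiber of the discrete Wronski map $\pi^\qq_\bla$ over the point $\aa$, is reduced. Equivalently, one must show that $\pi^\qq_\bla$ is unramified over $\aa$, so that the set-theoretic fiber $(\pi^\qq_\bla)^{-1}(\aa)$ consists of exactly $\dim(\V^S)_\bla$ distinct points (the dimension being computed by Lemma \ref{lem on char of VSl} evaluated at $t=1$).

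To establish this reducedness, I would use the Bethe-ansatz construction developed in Sections \ref{Sec BA q ne 1} and \ref{Sec BA q 1}: each solution of the Bethe equations associated to $\qq$, $\bb$, $\bla$ produces a joint eigenvector of $\Bcq$ in $V(\bb)_\bla$ whose eigenvalues determine a point of $\Oq_\lba$. Under the reality and separation assumptions, a Rolle / interlacing / sign-count argument applied to the Bethe equations — a discrete counterpart of the reality arguments of \cite{MTV5} for the ordinary Wronski map on spaces of polynomials — yields the full expected number $\dim V(\bb)_\bla$ of pairwise distinct real solutions, with pairwise distinct transfer-matrix eigenvalues. This produces a full eigenbasis of $\Bcq$ on $V(\bb)_\bla$ and hence simple spectrum; on the Wronski-map side, it says that $\Oq_\lba$ is reduced.

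The main obstacle is the reality/separation step in the last paragraph: transporting the polynomial-case reality theorem to quasi-exponential spaces and verifying the correct count of distinct real critical points under the condition $|b_i-b_j|>1$. Everything else is a straightforward combination of Proposition \ref{VSVb} and Theorem \ref{1 thm} with standard commutative algebra.
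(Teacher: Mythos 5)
Your reduction is exactly the one the paper uses: via Proposition \ref{VSVb} and Theorem \ref{1 thm} (i.e.\ Corollary \ref{1 cor}), the module $V(\bb)_\bla$ is the regular representation of the fiber algebra $\Oq_\lba\cong\Bcq\bigl(V(\bb)_\bla\bigr)$, and simple spectrum is equivalent to this finite-dimensional commutative algebra being reduced, i.e.\ having no nilpotent elements. Up to that point your argument is fine and matches the paper.

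The genuine gap is the step you yourself flag as ``the main obstacle'': you never prove that reality of $\qq$ and the separation condition $|\>b_i-b_j|>1$ force $\Oq_\lba$ (equivalently $\Bcq\bigl(V(\bb)_\bla\bigr)$) to be reduced; you only sketch a hoped-for Rolle/interlacing count of distinct real solutions of the Bethe equations. That is not how this step is handled, and it is not a routine argument: direct counting of Bethe roots in the separated real regime would amount to a new completeness theorem and is much harder than what is needed. The paper closes the gap by citing \cite[Lemma 3.7 and Lemma 3.10]{MTV5}, where the mechanism is different: under these hypotheses the Bethe algebra acting on $V(\bb)_\bla$ is symmetric with respect to a suitable positive definite Hermitian form, hence its operators are simultaneously diagonalizable and the image algebra has no nilpotents; reducedness of $\Oq_\lba$ then follows from the isomorphism above, and the regular-representation structure gives simplicity of the spectrum. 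So your proof is incomplete as written: either quote the reality results of \cite{MTV5} at this point, or supply the positive-definite-form/diagonalizability argument yourself; the Bethe-equation counting route you propose is not carried out and should not be expected to be straightforward.
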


\begin{proof}
Under the assumption made, the algebra $\Bcq\bigl(V(\bb)_\bla\bigr)$
has no nilpotent elements, see \cite[Lemma 3.7 and Lemma 3.10]{MTV5}.
Hence the algebra \,$\Oq_{\bla,\aa}$, and thus the algebra
\,$\Bcq\bigl(V(\bb)_\bla\bigr)$, is the direct sum of one-dimensional
algebras, so the spectrum of $\Bcq\bigl(V(\bb)_\bla\bigr)$ is simple,
see Proposition \ref{VSVb}.
\end{proof}

Other sufficient conditions for simplicity of the spectrum of
$\Bcq\bigl(V(\bb)_\bla\bigr)$ see in \cite[Theorem~2.1, part~(2)]{MTV5}
and in \cite[Theorem~1.1]{MTV7}.

\begin{cor}
\label{1 cor}
Assume that $\qq\in(\C^\times)^N$ has distinct coordinates.
Then the isomorphisms $\mu^\qq_\bla$ and $\nu^\qq_\bla$ induce an isomorphism
of the \;$\Bcq(V(\bb)_\bla)$-module \;$V(\bb)_\bla$ and the regular
representation of the algebra \,$\O^\qq_{\bla,\aa}$.
\qed
\end{cor}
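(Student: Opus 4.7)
The plan is to deduce the corollary from Theorem \ref{1 thm} by passing to the quotient modulo $I_\aa$ and then identifying $\V^S\<\</I_\aa\V^S$ with $V(\bb)$ via Proposition \ref{VSVb}.

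First I would observe that the $\C[\si_1\lc\si_n]$-module identification in Theorem \ref{1 thm}(i) says precisely that $\mu^\qq_\bla(\Sig_s)$ is the scalar operator of multiplication by $\si_s(\bs z)$ on $(\V^S)_\bla$ (this is also consistent with comparing the scalar formulas \Ref{BN} and \Ref{FN}). Consequently $\mu^\qq_\bla$ sends the ideal $I^\O_\lba=(\Sig_1-a_1\lc\Sig_n-a_n)\subset\O^\qq_\bla$ to the ideal of $\Bcq\bigl((\V^S)_\bla\bigr)$ generated by the operators $\si_s(\bs z)-a_s$; likewise $\nu^\qq_\bla(I^\O_\lba\cdot\O^\qq_\bla)=I_\aa(\V^S)_\bla$, because $\nu^\qq_\bla\bigl((\Sig_s-a_s)\>g\bigr)=(\si_s(\bs z)-a_s)\,\mu^\qq_\bla(g)\,v_\bla$ and the vectors $\mu^\qq_\bla(g)\,v_\bla=\nu^\qq_\bla(g)$ already exhaust $(\V^S)_\bla$. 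Taking quotients then yields an algebra isomorphism $\O^\qq_\lba\cong\Bcq\bigl((\V^S)_\bla/I_\aa(\V^S)_\bla\bigr)$ together with a compatible module isomorphism $\O^\qq_\lba\cong(\V^S)_\bla/I_\aa(\V^S)_\bla$ realizing the latter as the regular representation.

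It then remains to identify $(\V^S)_\bla/I_\aa(\V^S)_\bla$ with $V(\bb)_\bla$ as a $\Bcq$-module. This is supplied by Proposition \ref{VSVb}: after reordering the points $b_1\lc b_n$ so that $b_i\ne b_j+1$ for $i>j$ (a reordering that leaves $\aa$, hence $I_\aa$, unchanged), the proposition provides a $\Yn$-module isomorphism $\V^S\<\</I_\aa\V^S\to V(\bb)$, which restricts on $\bla$-weight subspaces. Composing with the quotient isomorphisms above produces the desired identification. The argument is essentially formal given Theorem \ref{1 thm} and Proposition \ref{VSVb}, so I do not anticipate any serious obstacle; the only step warranting care is the matching of ideals $I^\O_\lba\leftrightarrow I_\aa$, which ultimately rests on the $\C[\si_1\lc\si_n]$-module statement in Theorem \ref{1 thm}(i).
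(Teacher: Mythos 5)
Your argument is exactly the one the paper intends: the corollary is stated with no written proof because it follows immediately by combining Theorem \ref{1 thm} (whose $\C[\si_1\lc\si_n]$-module compatibility matches the ideal $I^\O_\lba$ with $I_\aa$, so both sides descend to the fibers) with the $\Yn$-module isomorphism $\V^S\<\</I_\aa\V^S\cong V(\bb)$ of Proposition \ref{VSVb} restricted to the $\bla$-weight subspaces. Your quotient-and-identify-ideals write-up, including the harmless reordering of the $b_i$ (which leaves $\aa$ unchanged), is correct and essentially the same route.
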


The corollary implies that
\,$\Bcq\bigl(V(\bb)_\bla\bigr)\subset\End\bigl(V(\bb)_\bla\bigr)$
is a maximal commutative subalgebra and \,$\Bcq\bigl(V(\bb)_\bla\bigr)$
is a Frobenius algebra, see for example \cite[Lemma 3.9]{MTV4}.

\section{Spaces of polynomials}
\label{spoly}

\subsection{Spaces of polynomials}
\label{Spaces of polynomials}

Let $\bla \in\Z^N_{\geq 0}$, \,$\la_1\geq \la_2\geq\dots\geq\la_N\geq 0$,
\,$|\bla|=n$. In other words, let $\bla$ be a partition of $n$ with
at most $N$ parts. Introduce the set \,$P=\{d_1>d_2\lsym>d_N\}$\,, where
\,$d_i=\la_i+N-i$. Let \,$\Oml$ be the affine $n$-dimensional space with
coordinates \,$f_{i,j}$, \,$i=1\lc N$, \,$j=1\lc d_i$, \,$d_i-j\notin P$.

\vsk.3>
Introduce polynomials
\vvn-.3>
\beq
\label{basis p}
f_i(u)\,=\,u^{d_i} +
\sum^{d_i}_{\satop{j=1}{d_i-j\,\notin\,P}} f_{i,j}u^{d_i-j},
\qquad i=1\lc N.
\vv.1>
\eeq
We identify points \>$X\in\Oml$ with $N$-dimensional complex
vector spaces generated by polynomials
\vvn-.4>
\beq
\label{basis X p}
f_i(u,X)\,=\,\,u^{d_i} +
\sum^{d_i}_{\satop{j=1}{d_i-j\notin\,P}} f_{i,j}(X)u^{d_i-j}\,,
\qquad i=1\lc N.
\eeq

\vsk.2>
Denote by \,$\Ol$ the algebra of regular functions on \,$\Oml$.
It is the polynomial algebra in the variables $f_{i,j}$. The algebra $\Ol$
has the degree function such that $\deg f_{i,j}=j$ for all $i,j$. We consider
the the increasing filtration $F_0\Ol \subset F_1\Ol \subset\dots\subset \Ol$,
where $F_s\Ol$ consists of elements of degree $\leq s$. The graded character of
$\Ol$ is
\vvn.3>
\be
\ch_{\>\Ol}(t)\,=\,\frac{\prod_{1\leq i<j\leq N}\,(1-t^{\la_i-\la_j +j-i})}
{\prod_{i=1}^N\,(\<\>t\<\>)_{\la_i+N-i}}\;.
\vv-.1>
\ee
see \cite[Lemma 3.1]{MTV4}.

\subsection{Another realization of \,$\Ol$}

Let \,$f_i(u)$, \,$i=1\lc N$, be the generating functions
given by~\Ref{basis p}. We have
\beq
\label{Wr coef q=1}
\Wr\>(f_1(u-1)\lc f_N(u-1))\,=\,\prod_{1\le i<j\le N}(\la_j-\la_i +i-j)
\ \Bigl(u^n+\sum_{s=1}^n (-1)^s\>\Sig_s\,u^{n-s}\Bigr)\,,
\vv.1>
\eeq
where $\Sig_1\lc\Sig_n$ are elements of $\Ol$.
Define the difference operator $\D^{\Ol}$ by
\vvn.3>
\beq
\label{DOla p}
\DOl(u,\tau)\,=\,\frac{1}{\Wr\>(f_1(u-1)\lc f_N(u-1))}\,\rdet
\left(\begin{matrix} f_1(u) & f_1(u-1) &\dots & f_1(u-N) \\
f_2(u) & f_2(u-1) &\dots & f_2(u-N) \\ \dots & \dots &\dots & \dots \\
1 & \tau &\dots & \tau^N
\end{matrix}\right).\kern-1em
\eeq
\beq
\label{formula for G}
\DOl(u,\tau)\tau^{-N}\,=\,
\sum_{k=0}^N \, (-1)^k\, G_k(u)\,(\tau^{-1} - 1)^{N-k}, \qquad
G_k(u)\,=\,\sum_{s=0}^\infty G_{k,s} u^{-s},
\vv-.1>
\eeq
where $G_{k,s} \in \Ol$.

\begin{lem}
\label{thm qq=1}
The following statements hold.
\begin{enumerate}\itemsep=4pt
\item[(i)]
$G_0(u) = 1$\>.
\item[(ii)]
$G_{k,s}=0$ \,for all \,$k=1\lc N$ and \,$s<k$\>.
\item[(iii)]
$G_{1,1}\lc G_{N,N}$ are complex numbers, and for a variable $x$, we have
For all $x$ we have
\be
\sum_{k=0}^N (-1)^k\,G_{k,k}\prod_{j=0}^{N-k-1}\!(x-j)\,=\,
\prod_{s=1}^N\,(x-\la_s-N+s)\,.
\vv-1.3>
\ee
\qed
\end{enumerate}
\end{lem}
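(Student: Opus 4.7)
My strategy is to prove (i), (ii), (iii) in sequence, each building on the previous.

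For (i), I would compare the coefficient of $\tau^{-N}$ on the two sides of \Ref{formula for G}. On the right, only the $k=0$ term contributes (since $(\tau^{-1}-1)^{N-k}$ has degree $N-k$ in $\tau^{-1}$), contributing $G_0(u)$. On the left, this equals the $\tau$-constant coefficient of $\DOl(u,\tau)$; expanding the $\rdet$ in \Ref{DOla p} along the bottom row, only the entry $1$ in the first column contributes at $\tau^0$, and its minor is precisely $\Wr(f_1(u-1),\ldots,f_N(u-1))$, cancelling the normalizing denominator. Hence $G_0(u)=1$.

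For (ii), I would exploit that $\DOl(u,\tau)$ annihilates each $f_i(u)$: substituting $\tau^l$ in the last row of \Ref{DOla p} by $f_i(u-l)$ produces a matrix with two equal rows. Setting $\tilde f_i(u):=f_i(u-N)$, which is monic of degree $d_i:=\la_i+N-i$ in $u$ with coefficients in $\Ol$, we get $\DOl(u,\tau)\tau^{-N}\tilde f_i(u)=0$ for $i=1,\ldots,N$, which by \Ref{formula for G} yields
\[
\sum_{k=0}^N (-1)^k G_k(u)\,(\tau^{-1}-1)^{N-k}\tilde f_i(u)\,=\,0.
\]
Since $\tau^{-1}-1$ is the forward-difference operator that lowers polynomial degree by one and multiplies the leading coefficient of $u^d$ by $d$, the polynomial $H_{i,k}(u):=(\tau^{-1}-1)^{N-k}\tilde f_i(u)$ has $u$-degree $\la_i+k-i$ with leading coefficient $\prod_{j=0}^{N-k-1}(d_i-j)\in\C$. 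Solving for $G_1,\ldots,G_N$ by Cramer's rule gives $G_k(u)=(-1)^{k+1}\det(H_k(u))/\det(H(u))$, where $H$ is the $N\times N$ matrix $(H_{i,k})_{i,k=1}^N$ and $H_k$ denotes $H$ with its $k$-th column replaced by $(H_{i,0})_i$. Each term in the Leibniz expansion of $\det(H)$ has $u$-degree $\sum_i(\la_i+\sigma(i)-i)=n$ for $\sigma\in S_N$, and the leading $u^n$ coefficient is a $\C$-valued Vandermonde-type determinant in the distinct values $d_i$, hence nonzero. For $\det(H_k)$, the analogous column-index sum is reduced by $k$, so $\deg_u\det(H_k)\le n-k$. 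Therefore $G_k(u)=O(u^{-k})$, i.e., $G_{k,s}=0$ for $s<k$.

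For (iii), by (ii) the coefficient $G_{k,k}$ is the ratio of the top $u$-coefficients of $\det(H_k)$ and $\det(H)$, both $\C$-valued, so $G_{k,k}\in\C$. To identify it, I would extract the leading $u^{d_i-N}$ term of the relation above: using $H_{i,k}(u)=\prod_{j=0}^{N-k-1}(d_i-j)\,u^{\la_i+k-i}+O(u^{\la_i+k-i-1})$ and $G_k(u)=G_{k,k}u^{-k}+O(u^{-k-1})$ gives
\[
\sum_{k=0}^N (-1)^k G_{k,k}\prod_{j=0}^{N-k-1}(d_i-j)\,=\,0,\qquad i=1,\ldots,N.
\]
Thus the polynomial $P(x):=\sum_{k=0}^N(-1)^k G_{k,k}\prod_{j=0}^{N-k-1}(x-j)$ vanishes at the $N$ distinct points $d_s=\la_s+N-s$. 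Since $G_{0,0}=1$ by (i), the $k=0$ summand $\prod_{j=0}^{N-1}(x-j)$ makes $P(x)$ monic of degree $N$, while each $k\ge 1$ summand has lower degree in $x$. Hence $P(x)=\prod_{s=1}^N(x-d_s)=\prod_{s=1}^N(x-\la_s-N+s)$, as claimed.

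The principal obstacle is the degree bookkeeping in (ii): one must verify that $\deg_u\det(H)=n$ with nonzero $\C$-valued leading coefficient and that $\deg_u\det(H_k)\le n-k$. Both bounds follow cleanly from the Leibniz formula once one observes that the leading $u$-coefficient of each $H_{i,k}$ is the concrete complex number $\prod_{j=0}^{N-k-1}(d_i-j)$, independent of the coordinates on $\Oml$. Once (ii) is in hand, (iii) reduces to the short leading-term analysis above.
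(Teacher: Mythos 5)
Your proposal is correct. Note, however, that the paper itself gives no argument for this lemma: it is stated with a \qed as the known discrete analogue of Theorem \ref{thm Q=1} (quoted from [MTV2, Theorem 3.7]), cf.\ the differential case in [MTV4]. So there is nothing to compare line by line; what you supply is a self-contained proof along the natural lines, and it checks out. Your key points are sound: the relation $\DOl(u,\tau)f_i(u)=0$ (two equal rows after substituting $\tau^l\mapsto f_i(u-l)$, exactly as used in the proof of Lemma \ref{coef alg p}), the degree/leading-coefficient bookkeeping for $H_{i,k}(u)=(\tau^{-1}-1)^{N-k}f_i(u-N)$, the fact that $[u^n]\det(H)=\det\bigl((d_i)(d_i-1)\cdots(d_i-N+k)\bigr)_{i,k}$ reduces by column operations to a Vandermonde determinant in the distinct $d_i=\la_i+N-i$, hence is a nonzero complex number (so Cramer's rule is legitimate over power series in $u^{-1}$ with coefficients in $\Ol$), and the extraction of the $u^{\,d_i-N}$ coefficient, which by part (ii) isolates exactly $\sum_k(-1)^kG_{k,k}\prod_{j=0}^{N-k-1}(d_i-j)=0$, so the monic degree-$N$ polynomial $P(x)$ vanishes at the $N$ distinct points $d_s$. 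Two cosmetic remarks: in (i) you drop the cofactor sign $(-1)^N$ arising from expanding the last row of the $(N+1)\times(N+1)$ row determinant in \Ref{DOla p}; this is a sign-convention issue already present in the paper's own normalization (compare \Ref{DO}, where the $\tau^0$-coefficient is declared to be $1$), so it does not affect your argument. And in (ii) it is worth saying explicitly that \Ref{formula for G} determines the $G_k(u)$ uniquely (the powers $(\tau^{-1}-1)^m$, $m=0\lc N$, form a basis), so the Cramer expressions indeed coincide with the $G_k(u)$ of the statement.
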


\begin{lem}
\label{coef alg p}
The functions \,$G_{k,s}\in\Ol$, $k=1\lc N$, \,$s=k+1,k+2,\dots{}$\>,
generate the algebra \,$\Ol$.
\end{lem}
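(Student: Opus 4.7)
The plan is to mimic the proof of Lemma~\ref{coef alg}, using the $(\tau^{-1}-1)$-basis expansion~\Ref{formula for G} in place of the $\tau$-basis. By the determinantal construction~\Ref{DOla p}, one has $\D^{\Ol}(u,\tau)\,f_i(u)=0$ for each $i=1,\ldots,N$. Writing $\D^{\Ol}(u,\tau)=\bigl(\D^{\Ol}(u,\tau)\tau^{-N}\bigr)\tau^N$, this identity is equivalent to
\[
\sum_{k=0}^N (-1)^k\, G_k(u)\,(\tau^{-1}-1)^{N-k}\, f_i(u-N)\,=\,0,\qquad i=1,\ldots,N.
\]
Each $h_{i,k}(u):=(\tau^{-1}-1)^{N-k} f_i(u-N)$ is a polynomial in $u$ of degree $\la_i-i+k$ (or zero), whose dependence on the variables $f_{i,j}$ is made explicit by the discrete-derivative rule.

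For each pair $(i,j)$ with $j\in E_i\setminus\{0\}$, where $E_i=\{0\}\cup\{j:1\le j\le d_i,\,d_i-j\notin P\}$ indexes the variables $f_{i,j}$, I will extract the coefficient of an appropriately chosen power of $u$ in the above identity. Lemma~\ref{thm qq=1} guarantees that $G_{k,s}=0$ for $s<k$ and that $G_{k,k}$ is scalar, so every non-scalar $G_{k,s}$ that can appear automatically satisfies $s>k$. The resulting scalar equation will take the form
\[
c_{i,j}\,f_{i,j}\;=\;P_{i,j}\bigl(\{G_{k,s}\}_{s>k}\,;\,\{f_{i,j'}\}_{j'\in E_i,\,j'<j}\bigr),
\]
where $c_{i,j}$ is a scalar and $P_{i,j}$ is a polynomial. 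Given that $c_{i,j}\ne 0$, this expresses $f_{i,j}$ as a polynomial in the $G_{k,s}$ with $s>k$ and in lower-indexed $f_{i,j'}$. Induction on $j$, with base $f_{i,0}=1$, then exhibits every generator $f_{i,j}$ of $\Ol$ as a polynomial in the specified $G_{k,s}$.

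The main obstacle is establishing that $c_{i,j}\ne 0$ precisely when $j\in E_i$. This is the polynomial-case analog of the non-vanishing of $\prod_{k\ne i}(1-q_k/q_i)$ in the proof of Lemma~\ref{coef alg}; in the $\qq=\bs 1$ setting that naive leading factor degenerates to zero, and the combinatorial role of the set $P=\{d_1>\cdots>d_N\}$ is exactly to select the indices at which the degeneration is cured. Concretely, $c_{i,j}$ is a combination of the scalars $G_{k,k}$ (fixed by Lemma~\ref{thm qq=1}(iii)) with explicit binomial/falling-factorial factors arising from the action of $(\tau^{-1}-1)^{N-k}$ on powers $(u-N)^{d_i-j}$; the nonvanishing under the condition $d_i-j\notin P$ reflects the geometric fact that precisely under this condition, $f_{i,j}$ is a free coordinate on $\Oml$. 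Carrying out the sign and combinatorial bookkeeping for arbitrary $\bla$ and arbitrary $j\in E_i$ is the technical heart of the argument.
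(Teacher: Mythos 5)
Your outline follows the same route as the paper: apply $\DOl(u,\tau)$ to $f_i(u)$ in the form \Ref{formula for G}, extract the coefficient of a suitable power of $u$, and solve recursively for $f_{i,j}$ in terms of the $G_{k,s}$ and lower $f_{i,j'}$. However, you stop exactly at the decisive point. The nonvanishing of the constant $c_{i,j}$ is not "sign and combinatorial bookkeeping" to be deferred, and your substitute justification --- that $f_{i,j}$ is a free coordinate on $\Oml$ precisely when $d_i-j\notin P$ --- is circular: the lemma being proved is that the $G_{k,s}$ generate the coordinate ring, so the freeness of the coordinate $f_{i,j}$ cannot by itself tell you that the particular linear relation you extracted has a nonzero coefficient in front of $f_{i,j}$. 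As written, the proof is incomplete at its only nontrivial step.

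The gap closes in one line once you actually evaluate $c_{i,j}$, and the ingredients are already in your text. Take the coefficient of $u^{\,d_i-N-j}$ in $\sum_{k=0}^N(-1)^k\,G_k(u)\,(\tau^{-1}-1)^{N-k}f_i(u-N)=0$. Since $G_{k,s}=0$ for $s<k$ (Lemma \ref{thm qq=1}(ii)), the only way the variable $f_{i,j}$ can contribute at this degree is through the scalar terms $G_{k,k}\,u^{-k}$ hitting the top coefficient of $(\tau^{-1}-1)^{N-k}(u-N)^{\,d_i-j}$, which is the falling factorial $\prod_{l=0}^{N-k-1}(d_i-j-l)$. Hence $c_{i,j}=\sum_{k=0}^N(-1)^k\,G_{k,k}\prod_{l=0}^{N-k-1}(d_i-j-l)$, and Lemma \ref{thm qq=1}(iii) evaluates this sum exactly: $c_{i,j}=\chi(d_i-j)$ with $\chi(x)=\prod_{s=1}^N(x-\la_s-N+s)=\prod_{s=1}^N(x-d_s)$. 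Thus $c_{i,j}\ne0$ if and only if $d_i-j\notin P$, which is precisely the index set of the coordinates $f_{i,j}$; all remaining terms at this degree involve only $G_{k,l}$ with $l>k$, the known scalars $G_{k,k}$, and $f_{i,s}$ with $s<j$, so the recursion on $j$ goes through. This evaluation is exactly the content of the paper's proof (the analogue, in the degenerate case $\qq=\bs1$, of the factor $\prod_{k\ne i}(1-q_k/q_i)$ in Lemma \ref{coef alg}); with it supplied, your argument is complete.
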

\begin{proof}
The coefficient of \,$u^{d_i-N-j}$ of the series
\,$\DOl f_i(u)$ has the form $\chi(d_i-j)\,f_{i,j}+\dots{}\;$, where
\,$\chi(x) = \prod_{s=1}^N\,(x - \la_s - N + s)$ and the dots denote the terms
which contain the elements \,$G_{k,l}$ and $f_{i,s}$ with $s<j$ only.
Since \,$\DOl f_i(u)=0$ and \,$\chi(d_i-j)\ne 0$, we can express recursively
the elements \,$f_{i,j}$ via the elements \,$G_{k,l}$ starting with $j=1$ and
then increasing the second index $j$.
\end{proof}

\subsection{Discrete Wronski map $\pi_\bla$}
\label{wronski q=1}
Consider \,$\C^n$ with coordinates $\si_1\lc\si_n$.
Introduce the {discrete Wronski map\/} $\pi_\bla:\Oml\to\C^n$ as follows.
Let $X$ be a point of $\Oml$. Define
\vvn.3>
\beq
\label{wronsk of X q=1}
\Wr_X(u)\,=\,\Wr\>\bigl(f_1(u-1,X)\lc f_N(u-1,X)\bigr)\,,
\vv.3>
\eeq
where $f_1(u,X)\lc f_N(u,X)$ are given by \Ref{basis X p}. Let
\be
\Wr_X(u)\,=\,\prod_{1\le i<j\le N}(\la_j-\la_i +i-j)
\ \Bigl(u^n+\sum_{s=1}^n \>(-1)^sa_s\,u^{n-s}\Bigr)\,.
\vv.2>
\ee
We set $\pi_\bla: X\mapsto (a_1\lc a_n)$.

\vsk.2>
The discrete Wronski map is a finite algebraic map, see
\cite[Proposition 3.1]{MTV5}. It defines an injective algebra homomorphism
\vvn.3>
\be
(\pi_\bla)^* :\,\C[\si_1\dots,\si_n] \to \O_\bla ,\qquad
\si_s \mapsto \Si_s\,,
\vv.2>
\ee
which gives a $\C[\si_1\dots,\si_n]$-module structure on $\O^\qq_\bla$.

\vsk.2>
For $\aa\in\C^n$, let $I^\O_\lba$ be the ideal in \,$\Ol\>$ generated by
the elements $\Sig_s-a_s$, $s=1\lc n$, where $\Sig_1\lc\Sig_n$ are defined by
\Ref{Wr coef q=1}. The quotient algebra
\vvn.3>
\beq
\label{Olaa q=1}
\O_\lba\>=\,\Ol/I^\O_\lba
\vv.3>
\eeq
is the scheme-theoretic fiber of the discrete Wronski map $\pi_\bla$.

\subsection{Second main result}
By formula \Ref{forMulaSing}, the space
$F_{k}(\V^S)^\sing_\bla$ is one-dimensional if $k=\sum_{i=1}^N{(i-1)\la_i }$.
Fix a nonzero element \,$v^\sing_\bla \in (\V^S)^\sing_\bla$
in that subspace.

\begin{thm}
\label{2 thm}

Let $\bla$ be a partition on $n$ with at most $N$ parts. Then

\begin{enumerate}
\item[(i)]
The map \,$\mu_\bla\!: G_{k,s} \mapsto C^\bla_{k,s}$, \,$k=1\lc N$, \,$s>0$,
extends uniquely to an isomorphism
\,$\mu_\bla\!:\O_\bla\<\to\Bo\bigl((\V^S)^\sing_\bla\bigr)$ of filtered
algebras. The isomorphism \,$\mu_\bla$ becomes an isomorphism of the
\,$\C[\si_1\lc\si_n]$-module \,$\O_\bla$ and the \,$\Czs$-module
$\Bo\bigl((\V^S)^\sing_\bla\bigr)$ if we identify the algebras
\,$\C[\si_1\lc\si_n]$ and \,$\Czs$ by the map
\;$\si_s\mapsto \si_s(\bs z)$, \,$s=1\lc n$.

\vsk.2>
\item[(ii)]
The map \;$\nu_\bla: O_\bla \to\>(\V^S)^{sing}_\bla$\,,
\,$f\mapsto\mu_\bla(f)\,v_\bla^\sing$, is an isomorphism of filtered vector
spaces increasing the index of filtration by \,$\kmin$. The isomorphism
\,$\nu_\bla$ identifies the \;$\Bo\bigl((\V^S)^\sing_\bla\bigr)$-module
\;$(\V^S)_\bla^\sing$ and the regular representation of the algebra
\,$\O_\bla$.
\end{enumerate}
\end{thm}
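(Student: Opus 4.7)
The plan is to construct $\mu_\bla$ as a filtered algebra homomorphism, deduce its surjectivity from Lemma \ref{coef alg p}, and then combine a graded-character comparison with cyclicity of \,$v^\sing_\bla$ under \,$\Bo$ to force bijectivity of both $\mu_\bla$ and $\nu_\bla$. Part (ii) will then follow at once: the filtration shift by \,$\kmin=\sum_{i=1}^N(i-1)\la_i$ \,reflects the fact that \,$v^\sing_\bla\in F_{\kmin}(\V^S)^\sing_\bla$ by formula \Ref{forMulaSing}.

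To see that $\mu_\bla$ is well defined as an algebra map, I would match the expansion \Ref{formula for G} of the universal difference operator \,$\DOl(u,\tau)$ \,with the expansion \Ref{formula for Se} of \,$\D^{\qq=\bs1}(u,\tau)$. The content is that every polynomial relation satisfied by the generators $G_{k,s}$ of $\Ol$ must be satisfied by the operators $\Cl_{k,s}$ acting on $(\V^S)^\sing_\bla$. This is where the Bethe ansatz of Section \ref{Sec BA q 1} enters: for a Zariski-dense set of specializations of $\zz$, one shows that the joint eigenvalues of the $\Cl_{k,s}$ on $(\V^S)^\sing_\bla$ are precisely the values of the $G_{k,s}$ at points of the Wronski fiber $\Oc_{\bla,\aa}$ with $a_s=\si_s(\zz)$. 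Since any relation among the $G_{k,s}$ vanishes pointwise on every such fiber, it must vanish as an operator, giving the homomorphism $\mu_\bla$. Surjectivity of $\mu_\bla$ is immediate from Lemma \ref{coef alg p}, because the $G_{k,s}$ generate $\Ol$ and their images together with scalars exhaust \,$\Bo\bigl((\V^S)^\sing_\bla\bigr)$.

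For bijectivity I would do a dimension count. The graded character of $\Ol$ displayed in Section \ref{Spaces of polynomials} coincides term by term with the graded character of $(\V^S)^\sing_\bla$ of formula \Ref{forMulaSing} (up to the overall shift by $t^{\kmin}$ accounted for by the base vector $v^\sing_\bla$). So, once $\nu_\bla$ is shown to be surjective, the chain
\be
\dim\>\Bo\bigl((\V^S)^\sing_\bla\bigr)\ \le\ \dim\>\Ol\ =\ \dim(\V^S)^\sing_\bla\ \le\ \dim\>\Bo\bigl((\V^S)^\sing_\bla\bigr)
\ee
collapses to equalities, forcing $\mu_\bla$ to be an isomorphism of filtered algebras and $\nu_\bla$ an isomorphism of filtered vector spaces identifying $(\V^S)^\sing_\bla$ with the regular representation of $\Ol$.

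The main obstacle, as in the $\qq$-twisted case, is the cyclicity of $v^\sing_\bla$ under $\Bo$. I would first establish it fiberwise: for generic $\aa\in\C^n$, pass via Proposition \ref{VSVb} to an irreducible tensor product $V(\bb)$ with $|\<\>b_i-b_j|$ large, where the Bethe ansatz of Section \ref{Sec BA q 1} produces a complete basis of $V(\bb)^\sing_\bla$ consisting of joint eigenvectors of $\Bo$ with pairwise distinct spectrum. A direct computation with the explicit form of the Bethe vectors, analogous to the one carried out in \cite{MTV3,MTV4} in the current algebra setting, shows that the projection of $v^\sing_\bla$ has a nonzero component along each Bethe eigenline, hence is cyclic. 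Since $(\V^S)^\sing_\bla$ is a free $\Czs$-module and cyclicity holds on a dense open locus of fibers, it propagates to cyclicity of $v^\sing_\bla$ as an element of the $\Czs$-module $(\V^S)^\sing_\bla$, completing the argument. The hard step is precisely the overlap computation that certifies nonvanishing against every Bethe vector.
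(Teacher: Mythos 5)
Your construction of $\mu_\bla$ matches the paper: well-definedness comes from Theorems \ref{thm on fund operator q=1} and \ref{thm on compl q=1} (for generic $\zz$ the joint eigenvalues of the $\Cl_{k,s}$ on the Bethe basis are values of the $G_{k,s}$ on the Wronski fiber), and surjectivity holds because the $\Cl_{k,s}$ generate $\Bo\bigl((\V^S)^\sing_\bla\bigr)$, with Lemma \ref{coef alg p} giving uniqueness of the extension. The gap is in how you get bijectivity. You hang everything on cyclicity of $v^\sing_\bla$, to be proved by showing that $v^\sing_\bla$ has a nonzero component along every Bethe eigenline in a generic fiber; you yourself call this the hard step, and you give no argument for it beyond an appeal to \cite{MTV3,MTV4} --- but the analogous statements there are not obtained by such an overlap computation; cyclicity is deduced there (and here) as a consequence of injectivity of $\nu$, not used as an input. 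Moreover, even granting the overlap nonvanishing, your propagation step fails as stated: surjectivity of the fiber maps for a Zariski-dense set of $\aa$ only says that the cokernel of the $\Czs$-module map $\Ol\to(\V^S)^\sing_\bla$ has proper closed support; a nonzero torsion cokernel supported on a hypersurface is not excluded, so generic fiberwise cyclicity does not yield cyclicity over $\Czs$. Finally, your chain of inequalities compares infinite dimensions; it must be run on the finite-dimensional filtration pieces, which requires knowing the maps are filtered and the characters agree.

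The paper's route avoids all of this and is what you should substitute for your third and fourth paragraphs. First, injectivity of $\mu_\bla$ is read off directly from Theorems \ref{thm on fund operator q=1} and \ref{thm on compl q=1}: a nonzero $R(G_{k,s})\in\Ol$ takes a nonzero value at some point coming from a Bethe solution for generic $\zz$, so $R(\Cl_{k,s})\ne0$. Second, the kernel of $\nu_\bla$ is an ideal of $\Ol$ (since $\nu_\bla(fg)=\mu_\bla(f)\,\nu_\bla(g)$) whose intersection with $\C[\si_1\lc\si_n]$ is zero, because $(\V^S)^\sing_\bla$ is a free $\Czs$-module and $v^\sing_\bla\ne0$; as $\Ol$ is a domain, finite over $\C[\si_1\lc\si_n]$ via the Wronski map $\pi_\bla$, any nonzero ideal meets $\C[\si_1\lc\si_n]$ nontrivially, so $\nu_\bla$ is injective. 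Then the character identity $t^{\kmin}\ch_{\>\Ol}(t)=\ch_{\>(\V^S)^\sing_\bla}(t)$ from \Ref{forMulaSing}, applied to the filtration pieces of the filtered injection $\nu_\bla$ (which shifts degree by $\kmin$ because $v^\sing_\bla\in F_{\kmin}(\V^S)^\sing_\bla$), forces surjectivity. This gives both that $\nu_\bla$ is a filtered isomorphism identifying $(\V^S)^\sing_\bla$ with the regular representation of $\Ol$, and, together with the injectivity and surjectivity of $\mu_\bla$ and $\deg G_{k,s}=\deg C_{k,s}$, part (i); cyclicity of $v^\sing_\bla$ comes out at the end rather than going in at the start.
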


The theorem is proved in Section \ref{proof thm main 2}.

\vsk.4>
Assume that the complex numbers \,$b_1\lc b_n$ are such that \,$b_i\ne b_j+1$
for $i>j$. Consider the tensor product $V(\bb)=\C^N(b_1)\lox\C^N(b_n)$
of evaluation $\Yn$-modules and its singular weight subspace
$V(\bb)^\sing_\bla$. Introduce the numbers $\aa=(a_1\lc a_n)$
by the formula $a_s=\si_s(b_1\lc b_n)$, cf.~\Ref{ab}.

\begin{cor}
\label{2 cor}
The isomorphisms \,$\mu_\bla$ and \,$\nu_\bla$ induce an isomorphism of the
\;$\Bo\bigl(V(\bb)^\sing_\bla\bigr)$-module \;$V(\bb)^\sing_\bla$ and the
regular representation of the algebra \,$\O_{\bla,\aa}$. In particular,\\
$\Bo\bigl(V(\bb)^\sing_\bla\bigr)\subset \End\bigl(V(\bb)^\sing_\bla\bigr)$
is a maximal commutative subalgebra and \,$\Bo\bigl(V(\bb)^\sing_\bla\bigr)$
is a Frobenius algebra, see for example \cite[Lemma 3.9]{MTV4}.
\qed
\end{cor}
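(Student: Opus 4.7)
The plan is to deduce Corollary \ref{2 cor} from Theorem \ref{2 thm} by reducing modulo the ideal $I_\aa\subset\Czs$, using Proposition \ref{VSVb} to identify the quotient with $V(\bb)$. The key observation is that Theorem \ref{2 thm}(i) identifies $\mu_\bla$ as an isomorphism of $\Czs$-algebras and Theorem \ref{2 thm}(ii) identifies $\nu_\bla$ as an isomorphism of $\Czs$-modules (under $\si_s\mapsto\si_s(\bs z)$), so both isomorphisms survive tensoring with $\Czs/I_\aa\Czs=\C$ (with $\si_s\mapsto a_s$).

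The main technical point I would address first is that reducing $(\V^S)^\sing_\bla$ modulo $I_\aa$ yields $V(\bb)^\sing_\bla$. Since the $\Czs$-action on $\V^S$ is by scalar operators commuting with $\gln$, the weight and singular-vector decompositions of $\V^S$ are decompositions of $\Czs$-modules, and $\V^S$ admits a $\gln\ox\Czs$-equivariant decomposition isomorphic to $\tbigoplus_\bla (\V^S)^\sing_\bla\ox_\C L(\bla)$, where $L(\bla)$ is the irreducible $\gln$-module of highest weight $\bla$. Consequently, $I_\aa\V^S\cap(\V^S)^\sing_\bla=I_\aa(\V^S)^\sing_\bla$, and combining with Proposition \ref{VSVb} gives a canonical isomorphism
\be
(\V^S)^\sing_\bla\big/I_\aa(\V^S)^\sing_\bla\;\cong\;V(\bb)^\sing_\bla.
\ee

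Next, I would push $\nu_\bla$ and $\mu_\bla$ through the quotient. Reducing $\nu_\bla$ modulo $I_\aa$ produces an isomorphism $\O_{\bla,\aa}\to V(\bb)^\sing_\bla$ of vector spaces. For the algebra side, the surjection $\Bo\bigl((\V^S)^\sing_\bla\bigr)\to\Bo\bigl(V(\bb)^\sing_\bla\bigr)$ induced by the module quotient has kernel exactly $I_\aa\Bo\bigl((\V^S)^\sing_\bla\bigr)$: using $\mu_\bla$ to identify $\Bo\bigl((\V^S)^\sing_\bla\bigr)$ with the regular representation of $\O_\bla$, an element annihilates the quotient module iff multiplication by it lands in $I^\O_{\bla,\aa}\O_\bla$, which by faithfulness of the regular representation happens iff it lies in $I^\O_{\bla,\aa}$. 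Hence $\mu_\bla$ descends to an algebra isomorphism $\O_{\bla,\aa}\to\Bo\bigl(V(\bb)^\sing_\bla\bigr)$ compatible with $\nu_\bla$, exhibiting $V(\bb)^\sing_\bla$ as the regular representation of $\O_{\bla,\aa}$.

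The hardest step is the bookkeeping in the previous paragraph to confirm that the Bethe-algebra quotient matches the ideal-quotient on the module side; this is really the content of the argument, and everything else is formal. The final assertions—maximal commutativity and the Frobenius property—then follow because a finite-dimensional commutative algebra acting on its own regular representation is automatically a maximal commutative subalgebra of its endomorphism ring, together with the Frobenius criterion recalled in \cite[Lemma 3.9]{MTV4}. This proof is strictly parallel to the (tacit) derivation of Corollary \ref{1 cor} from Theorem \ref{1 thm} and Proposition \ref{VSVb}.
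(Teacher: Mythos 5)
Your proposal is correct and follows exactly the route the paper intends (the paper states Corollary \ref{2 cor} without proof, as an immediate consequence of Theorem \ref{2 thm} and Proposition \ref{VSVb} obtained by reducing modulo $I_\aa$, i.e.\ modulo the ideal $I^\O_{\bla,\aa}$ on the $\O_\bla$ side). Your extra bookkeeping — that semisimplicity of the $\gln$-action and $\Czs$-equivariance give $I_\aa\V^S\cap(\V^S)^\sing_\bla=I_\aa(\V^S)^\sing_\bla$, and that faithfulness of the regular representation identifies the kernel of $\Bo\bigl((\V^S)^\sing_\bla\bigr)\to\Bo\bigl(V(\bb)^\sing_\bla\bigr)$ with $\mu_\bla(I^\O_{\bla,\aa})$ — is precisely what the paper leaves tacit, and it is sound.
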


\begin{rem}
Corollary \ref{2 cor} is used in \cite{MTV6} to prove Theorem 7.3 therein,
similarly to the proof of Theorems 4.1, 4.3 in \cite{MTV6}. The algebra
\,$\B^\yy_\nnla\<\>(b_1\lc b_n)$ in \cite{MTV6} coincides with the algebra
\,$\Bo\bigl(V(\bb)^\sing_\bla\bigr)$ \,in this paper.
\end{rem}

\begin{cor}
\label{2 prime cor}
Let \,$b_1,\dots,b_n$ be real and such that \,$|\>b_i-b_j|>1$ for all $i\ne j$.
Then the algebra \,$\Bo\bigl(V(\bb)^\sing_\bla\bigr)$ has simple spectrum.
\qed
\end{cor}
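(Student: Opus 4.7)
The plan is to mimic the argument for Corollary \ref{1 prime cor} in the singular-weight setting, routing everything through the geometric isomorphism of Corollary \ref{2 cor}. Specifically, Corollary \ref{2 cor} identifies the algebra $\Bo\bigl(V(\bb)^\sing_\bla\bigr)$ with the fiber algebra $\O_{\bla,\aa}$ of the discrete Wronski map $\pi_\bla$, where $\aa=(\si_1(\bb),\dots,\si_n(\bb))$. A finite-dimensional commutative $\C$-algebra has simple spectrum (i.e.\ is semisimple with one-dimensional isotypic components, equivalently a direct sum of copies of $\C$) if and only if it has no nonzero nilpotent elements. So it suffices to show that $\O_{\bla,\aa}$ is reduced.

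The first step is to invoke Corollary \ref{2 cor} to translate the problem from $\End\bigl(V(\bb)^\sing_\bla\bigr)$ into the scheme-theoretic fiber $\O_{\bla,\aa}=\Ol/I^\O_\lba$ of $\pi_\bla$ over the point $\aa\in\C^n$. The second step is to show that this fiber is reduced when $b_1,\dots,b_n$ are real and satisfy $|b_i-b_j|>1$ for $i\ne j$. This is the polynomial analogue of the statement used to prove Corollary \ref{1 prime cor}: there the reducedness was extracted from \cite[Lemma 3.7 and Lemma 3.10]{MTV5} for the quasi-exponential discrete Wronski map $\pi^\qq_\bla$, and the very same reality/separation argument specializes to $\pi_\bla$ (one considers a nonzero nilpotent, produces a point of $\Oml$ over $\aa$ giving a nonreal or a collided critical value, and contradicts the hypothesis on the $b_i$). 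Once the fiber is reduced, it decomposes as a direct sum of one-dimensional algebras, which is precisely the assertion of simple spectrum for $\Bo\bigl(V(\bb)^\sing_\bla\bigr)$.

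For this decomposition to give the desired conclusion on the operator algebra, the third step is to observe that under the hypothesis $b_i\ne b_j+1$ (which is automatic from $|b_i-b_j|>1$), Proposition \ref{VSVb} identifies $V(\bb)$ with a quotient $\V^S/I_\aa\V^S$, so the action of $\Bo$ on the singular weight subspace of $V(\bb)$ is exactly the action on the corresponding fiber of the isomorphism of Theorem \ref{2 thm}. Hence simplicity of the spectrum of $\O_{\bla,\aa}$ transfers directly to simplicity of the spectrum of $\Bo\bigl(V(\bb)^\sing_\bla\bigr)$.

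The main obstacle is the second step: verifying that the reducedness results of \cite{MTV5} for the quasi-exponential Wronski map carry over verbatim (or with only cosmetic modifications) to the polynomial discrete Wronski map $\pi_\bla$. The heart of the matter is showing that at a real, sufficiently-separated point $\aa$, every solution space $X\in\pi_\bla^{-1}(\aa)$ is real and no two solutions collide, so that $\pi_\bla$ is unramified there; everything else is a formal consequence of Corollary \ref{2 cor} plus elementary linear algebra of finite-dimensional commutative algebras.
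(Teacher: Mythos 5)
Your overall framing does match the shape of the paper's argument: the paper proves Corollary \ref{2 prime cor} by repeating the proof of Corollary \ref{1 prime cor}, i.e.\ by combining the identification of \,$\Bo\bigl(V(\bb)^\sing_\bla\bigr)$ with the fiber algebra \,$\O_{\bla,\aa}$ (Corollary \ref{2 cor}, which rests on Theorem \ref{2 thm} and Proposition \ref{VSVb}) with the absence of nilpotent elements, and your first and third steps are exactly that. The genuine gap is in your second step, which carries all the content, and there you have the logical direction reversed. In the paper, the input from \cite[Lemma 3.7 and Lemma 3.10]{MTV5} is an operator-level statement: for real \,$b_1\lc b_n$ with \,$|\>b_i-b_j|>1$ the Bethe operators on the tensor product of evaluation modules are symmetric with respect to a form which is definite under this separation hypothesis, hence diagonalizable, so the commutative algebra \,$\Bo\bigl(V(\bb)^\sing_\bla\bigr)$ itself has no nilpotent elements; the reducedness of the scheme-theoretic fiber \,$\O_{\bla,\aa}$ of \,$\pi_\bla$ is then a \emph{consequence} of the isomorphism of Corollary \ref{2 cor}, not an input to it. You instead propose to prove directly that the fiber of the polynomial discrete Wronski map over a real, well-separated point \,$\aa$ is reduced (every point of \,$\Oml$ over \,$\aa$ real, no collisions, \,$\pi_\bla$ unramified) and then transfer this to the operator algebra. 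That is precisely the step you leave unverified, you yourself call it the main obstacle, and the mechanism you sketch does not work: a nonzero nilpotent of \,$\O_{\bla,\aa}$ does not produce a nonreal point of the fiber or a collision of roots of the Wronskian, since nonreducedness is a multiplicity statement about the scheme-theoretic fiber which is perfectly compatible with all of its set-theoretic points being real and distinct critical values; reality results for the Wronski map alone do not rule it out, and no independent geometric unramifiedness argument at such \,$\aa$ is supplied.

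To close the gap, argue as the paper does for Corollary \ref{1 prime cor}: cite \cite[Lemma 3.7 and Lemma 3.10]{MTV5}, which apply as well to the \,$\qq=\bs 1$ Bethe algebra acting on \,$V(\bb)^\sing_\bla$ for real \,$b_i$ with \,$|\>b_i-b_j|>1$, to conclude that \,$\Bo\bigl(V(\bb)^\sing_\bla\bigr)$ has no nilpotent elements; then by Corollary \ref{2 cor} the algebra \,$\O_{\bla,\aa}$, and hence \,$\Bo\bigl(V(\bb)^\sing_\bla\bigr)$, is a direct sum of one-dimensional algebras, and since \,$V(\bb)^\sing_\bla$ is its regular representation the joint eigenspaces are one-dimensional, i.e.\ the spectrum is simple.
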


The proof is similar to that of Corollary \ref{1 prime cor}.

\vsk.4>
Other sufficient conditions for simplicity of the spectrum of
$\Bo\bigl(V(\bb)^\sing_\bla\bigr)$ see in \cite[Theorem~2.1, part~(2)]{MTV5}
and in \cite[Theorem~1.1]{MTV7}.

\section{Bethe ansatz for \,$\qq$ with distinct coordinates}
\label{Sec BA q ne 1}

To prove Theorems \ref{1 thm} and \ref{2 thm} we need some facts about
the Bethe ansatz. We consider the tensor product of evaluation $\Yn$-modules
$V(\bb)=\C^N(b_1)\lox\C^N(b_n)$ and the action of the Bethe algebra \,$\Bcq$
on the weight subspace $V(\bb)_\bla$.

\subsection{Bethe ansatz equations associated with $V(\bb)_\bla$}
\label{Bethe ansatz equations associated with a weight subspace}

Recall \,$\bla=(\la_1\lc\la_N)$, \,$|\bla|=n$. Introduce
\,$\bs l = (l_1\lc l_{N-1})$ with $l_j=\,\la_{j+1}\lsym+\la_N$.
\vvgood
We have $n\geq l_1\geq \dots \geq l_{N-1}\geq 0$.
Set \,$l_0=n$\>, \,$l_N=0$\>, and \,$l=l_1 + \dots + l_{N-1}$.
We shall consider functions of \,$l$ variables
\vvn.3>
\be
\bs t = \big(t^{(1)}_{1}\lc t^{(1)}_{l_1},
t^{(2)}_{1}\lc t^{(2)}_{l_2}\lc t^{(N-1)}_{1}\lc t^{(N-1)}_{l_{N-1}}\big)\,.
\vv.3>
\ee
The following system of \,$l$ algebraic equations with respect to \,$l$
variables $\bs t$ is called the Bethe ansatz equations associated with
$V(\bb)_\bla$ and $\qq$\,:
\vvn.3>
\begin{align}
\label{BAE}
q_1\,\prod_{s=1}^n\,(t^{(1)}_j - b_s + 1) &\,
\prod_{\satop{j'=1}{j'\neq j}}^{l_1}\,(t^{(1)}_j - t^{(1)}_{j'} - 1 )\,
\prod_{j'=1}^{l_2} (t^{(1)}_j - t^{(2)}_{j'})\,={}
\\
{}=\,q_2\,\prod_{s=1}^n (t^{(1)}_j - b_s) &\,
\prod_{\satop{j'=1}{j'\neq j}}^{l_1}\,(t^{(1)}_j - t^{(1)}_{j'} + 1 )\,
\prod_{j'=1}^{l_2} (t^{(1)}_j - t^{(2)}_{j'} - 1)\,,
\notag
\end{align}
\vv-.5>
\begin{align*}
q_a\,\prod_{j'=1}^{l_{a-1}}\,(t^{(a)}_j - t^{(a-1)}_{j'} + 1) &\,
\prod_{\satop{j'=1}{j'\neq j}}^{l_a}\,(t^{(a)}_j - t^{(a)}_{j'} - 1 )\,
\prod_{j'=1}^{l_{a+1}} (t^{(a)}_j - t^{(a+1)}_{j'})\,={}
\\
{}=\,q_{a+1}\,\prod_{j'=1}^{l_{a-1}} (t^{(a)}_j - t^{(a-1)}_{j'}) &\,
\prod_{\satop{j'=1}{j'\neq j}}^{l_a}\,(t^{(a)}_j - t^{(a)}_{j'} + 1 )\,
\prod_{j'=1}^{l_{a+1}} (t^{(a)}_j - t^{(a+1)}_{j'}-1)\,.
\\[-16pt]
\end{align*}
Here the equations of the first group are labeled by \,$j=1\lc l_1$, the
equations of the second group are labeled by \,$a=2\lc N-1$, \,$j=1\lc l_a$,

\vsk.2>
A solution \;$\ttbi$ of system \Ref{BAE} is called off-diagonal if
\;${\tti^{(a)}_{j} \neq \tti^{(a)}_{j'}}$ \,for any \,$a=1\lc\alb N-1$,
\,$1\leq j \leq j' \leq l_a$, \,and \,$\tti^{(a)}_{j}\neq\tti^{(a+1)}_{j'}$
for any \,$a=1\lc N-2$, \,$j= 1\lc l_a$, \,$j'= 1\lc l_{a+1}$.

\begin{rem}
If \,$\bla=(n,0\dots,0)$, then \,$l_1=\dots=l_{N-1}=0$. In this case, there
are no variables $\TT$ and it is convenient to think that the Bethe ansatz
equations is the equation $1=1$.
\end{rem}

\subsection{Weight function and Bethe ansatz theorem}
\label{Weight function and}

Denote by \,$\om_\bla(\bs t,\bb)$ the universal weight function associated
with the weight subspace $V(\bb)_\bla$. The universal weight function is
defined by formula~(6.2) in \cite{MTV1}, see explicit formula~\Ref{hWI-}
below, cf.~\cite{TV1, MTV2, RTV, TV3}. For the moment, it is enough for us
to know that this function is a $V(\bb)_\bla$-valued polynomial in $\bs t$,
\,$\bb$.

If $\tilde{\bs t}$ is an off-diagonal solution of the Bethe ansatz equations,
then the vector $\om_\bla(\ttbi,\bb) \in V(\bb)_\bla$ is called the Bethe
vector associated with $\tilde{\bs t}$.

\begin{thm}
\label{thm on Bethe ansatz}
Let \;$\ttbi$ be an off-diagonal solution of the Bethe ansatz equations
\Ref{BAE}. Assume that the Bethe vector \,$\om_\bla(\ttbi,\bb)$ is nonzero.
Then the Bethe vector is an eigenvector of all transfer-matrices \,$\Bq_k(u)$,
\,$k=1\lc N$.
\qed
\end{thm}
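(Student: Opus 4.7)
The plan is to use the algebraic Bethe ansatz, which is the standard way to prove eigenvector statements of this form. The starting point is the fact that the universal weight function $\omega_\bla(\bs t,\bb)$ is built from off-diagonal entries $T_{i,j}(u)$ with $i<j$ of the monodromy matrix applied to the reference vector $v_1^{\otimes n}=v_1\lox v_1$, arranged according to the nested scheme: the variables $t^{(a)}_j$ correspond to ``creation operators'' at level $a$ that lower the weight from $\la^{(a-1)}$ to $\la^{(a)}$. Under the identification of Proposition \ref{VSVb}, the reference vector $v_1^{\otimes n}$ is a common eigenvector of all $T_{ii}(u)$ on $V(\bb)$, so the diagonal entries act on it by explicit scalar functions (the ``vacuum eigenvalues''), while the strictly lower-triangular entries annihilate it.

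First I would treat the case $N=2$ to fix ideas. Here the weight function is a product of $l_1$ copies of $T_{2,1}(t^{(1)}_j)$ applied to $v_1^{\otimes n}$, and one uses the defining relation \Ref{ijkl} with $(i,j,k,l)=(1,1,2,1)$ and $(2,2,2,1)$ to move $B^\qq_1(u)=q_1T_{1,1}(u)+q_2T_{2,2}(u)$ and $B^\qq_2(u)=q_1q_2\,\qdet T(u)$ past each creation operator. Each such commutation produces one ``wanted'' contribution where the transfer matrix is finally evaluated on $v_1^{\otimes n}$, and one ``unwanted'' contribution where one $T_{2,1}(t^{(1)}_j)$ has been replaced by $T_{2,1}(u)$. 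Collecting the wanted contributions over all orderings gives the predicted eigenvalue, and collecting the unwanted contributions gives residue-type sums which vanish precisely when each $t^{(1)}_j$ satisfies the first group of equations in \Ref{BAE}.

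For general $N$, I would proceed by the nested Bethe ansatz, exactly as in the Kulish--Reshetikhin construction. The key point is that the ``unwanted'' terms produced when one pushes $B^\qq_k(u)$ through the level-$1$ creation operators themselves involve a smaller transfer matrix for a $Y(\gl_{N-1})$-type auxiliary problem with shifted inhomogeneities, whose reference state is the level-$2$ part of the weight function. Applying induction on $N$, vanishing of the unwanted terms at level $a$ is governed exactly by the $a$-th group of equations in \Ref{BAE}, with the twists $q_a,q_{a+1}$ entering through the expansion \Ref{Bp} of $B^\qq_k(u)$ into sums over $\ib$. The wanted contribution at each step accumulates into a clean eigenvalue expressed via the vacuum eigenvalues and products of the Baxter $Q$-polynomials $Q^{(a)}(u)=\prod_j(u-\tti^{(a)}_j)$, which is exactly the value of the difference operator $\D^\qq(u,\tau)$ computed symbolically on the flag of polynomials determined by $\ttbi$.

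The main obstacle is the combinatorial bookkeeping of unwanted terms in the nested ansatz: one must simultaneously track the shifts of the spectral parameter produced by each RTT commutation and verify that, after summation over all orderings of the creation operators, the unwanted residues organize into the Bethe equations at each level. This cancellation is not immediate and occupies the bulk of the argument; however it is purely a consequence of the $RTT$-relations \Ref{ijkl} together with the off-diagonal condition on $\ttbi$ that prevents spurious poles, and once it is established, the eigenvalue read off from the wanted terms is independent of the order in which creation operators were arranged, which a posteriori justifies writing $\om_\bla(\ttbi,\bb)$ as an eigenvector regardless of the symmetrization conventions in its definition.
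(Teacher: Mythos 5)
The paper itself does not prove this theorem: it invokes Theorem~6.1 of \cite{MTV1}, noting that the case $k=1$ goes back to \cite{KR1}. Your plan is the classical nested algebraic Bethe ansatz, which is indeed how the $k=1$ case is settled in \cite{KR1}, but as written it has two genuine gaps. First, the Bethe vector in this paper is defined by the explicit symmetrized combinatorial formula \Ref{hWI-}, not as an ordered product of creation operators applied to $\voxn$; the identification of such combinatorial weight functions with the vectors produced by the nested algebraic construction is itself a nontrivial theorem (this is essentially the content of \cite{TV3}), and without citing or proving it your commutation argument says nothing about $\om_\bla(\ttbi,\bb)$ as defined here.

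Second, and more seriously, your argument is only carried out for $\Bq_1(u)$. For $k\ge 2$ the transfer matrices \Ref{Bp} are sums of quantum minors $M_{\iib}(u)$, and the exchange relations of quantum minors with the creation operators $T_{a+1,a}(t)$ are far more involved than the single RTT relation \Ref{ijkl} you use: the unwanted terms no longer organize into the simple residue pattern of the $k=1$ computation, and showing that the same equations \Ref{BAE} cancel them simultaneously for all $k$, with the eigenvalues assembling into \Ref{chI}, is precisely the hard point --- it is what occupies \cite{MTV1}, which moreover does not proceed by direct bookkeeping of unwanted terms but by an inductive argument exploiting properties of the weight functions. Your proposal names this obstacle (``the combinatorial bookkeeping \dots\ occupies the bulk of the argument'') but does not overcome it, so as it stands it establishes the claim only for $k=1$.
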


The statement follows from Theorem~6.1 in~\cite{MTV1}.
For ${k=1}$, the result is established in~\cite{KR1}.

\vsk.2>
The eigenvalues of the Bethe vector are as follows. Set
\vvn.3>
\begin{align*}
& \chi_1(u, \bs t,\bb)\,=\,q_1\, \prod_{s=1}^n\,\frac{u-b_s + 1}{u-b_s}
\ \prod_{j=1}^{l_1}\,\frac{u- t^{(1)}_j - 1}{u-t^{(1)}_j}\;,
\\[6pt]
& \chi_a(u, \bs t,\bb)\,=\,
q_a \,\prod_{j=1}^{l_{a-1}}\,\frac{u- t^{(a-1)}_j + 1}{u-t^{(a-1)}_j}
\ \prod_{j=1}^{l_a}\,\frac{u- t^{(a)}_j - 1}{u-t^{(a)}_j}\;,
\\[-14pt]
\end{align*}
for \,$a=2\lc N$. Define the functions \,$c_k(u,\bs t,\bb)$ \>by the formula
\beq
\label{chI}
(1-\chi_1(u,\bs t,\bb)\,\tau)\cdots
(1-\chi_N(u,\bs t,\bb)\,\tau)\,=\,
\sum_{k=0}^N \,(-1)^k\,c_k(u,\bs t,\bb)\,\tau^k\>.
\vv-.6>
\eeq
Then
\beq
\label{eig v}
\Bq_k(u)\,\om_\bla(\ttbi,\bb)\,=\,
c_k(u,\ttbi,\bb)\,\om_\bla(\ttbi,\bb)
\vv.2>
\eeq
for \,$k=1\lc N$, see Theorem 6.1 in \cite{MTV1}.

\begin{rem}
If $\bla=(n,0\dots,0)$, then $V(\bb)_\bla$ is the one-dimensional space
generated by the vector $v_1\otimes\dots\otimes v_1$. It is convenient
to assume that the universal weight function is given by the formula
\,$\om(\bb)=v_1\lsym\otimes v_1$. This vector is an eigenvector of the Bethe
algebra \,$\Bcq$. The eigenvalues of this eigenvector are defined by formula
\Ref{chI}, in which the difference operator takes the form
\vvn-.4>
\beq
\label{TR BA}
\biggl(1- q_1\,\Bigl(\,\prod_{s=1}^n \frac{u-b_s+1}{u-b_s}\>\Bigr)\,\tau
\biggr)\,\prod_{i=2}^N\,(1 - q_i\,\tau)\,.
\vv.3>
\eeq
\end{rem}

\subsection{Difference operator associated with an off-diagonal solution}
\label{DI off}

For $\ttbi\in\C^l$, we introduce the associated fundamental difference
operator
\vvn.3>
\beq
\label{fund op}
\Dt\,=\,\sum_{k=0}^N \,(-1)^k\,c_k(u,\ttbi,\bb)\,\tau^k\>,
\vv.1>
\eeq
see \cite{MTV2}. Here the functions \,$c_k(u,\ttbi,\bb)$ \>are given
by \Ref{chI}.

\begin{thm}
\label{thm on fund operator}
Assume that \,$\qq$ has distinct coordinates. Let \;$\ttbi$ be an off-diagonal
solution of the Bethe ansatz equations. Then there exist polynomials
\,$p_k(u)$\>, \,$k=1\lc N$, such that \;$\deg\>p_k(u)=\la_k$\>,
\,\,$\D_{\ttbi}(u,\tau)\,q_k^u\,p_k(u)=0$\>, and
\beq
\label{Wr b}
\Wr\>(q_1^{u-1}p_1(u-1)\lc q_N^{u-1}p_N(u-1))\,=\,
\prod_{i=1}^N q_i^{u-1}\!\prod_{1\le i<j\le N}(q^{-1}_j-q^{-1}_i)\,
\prod_{s=1}^n\,(u-b_i)\,.\kern-1em
\vv-.8>
\eeq
\qed
\end{thm}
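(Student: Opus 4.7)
The plan is to exploit the factorization \Ref{chI} of the fundamental difference operator,
$$\Dt\,=\,(1-\chi_1(u)\tau)\,(1-\chi_2(u)\tau)\cdots(1-\chi_N(u)\tau),$$
as a product of $N$ first-order difference operators, and to use the Bethe ansatz equations as the residue-cancellation conditions that promote rational solutions of the individual factors to polynomial solutions of the full operator. Introduce the Baxter polynomials $T_0(u) = \prod_{s=1}^n(u-b_s)$, $T_a(u) = \prod_{j=1}^{l_a}(u-t_j^{(a)})$ for $a=1,\dots,N-1$, and $T_N(u)=1$. A direct calculation yields $\chi_a(u) = q_a\>T_{a-1}(u+1)\>T_a(u-1)/(T_{a-1}(u)\>T_a(u))$ for $a=1,\dots,N$, and substituting $u = t_j^{(a)}$ into \Ref{BAE} converts the Bethe ansatz equations into the bilinear identities
$$q_a\>T_{a-1}(t_j^{(a)}{+}1)\>T_a(t_j^{(a)}{-}1)\>T_{a+1}(t_j^{(a)}) + q_{a+1}\>T_{a-1}(t_j^{(a)})\>T_a(t_j^{(a)}{+}1)\>T_{a+1}(t_j^{(a)}{-}1)\,=\,0$$
at every Bethe root $t_j^{(a)}$.

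Next I construct the polynomials $p_k$ separately for each $k = N, N{-}1, \dots, 1$. For $k=N$, the polynomial $p_N(u) = T_{N-1}(u+1)$ has degree $l_{N-1} = \la_N$ and satisfies $(1-\chi_N(u)\tau)(q_N^u p_N(u)) = 0$ by direct verification, hence lies in $\ker\Dt$. For $k < N$, the single factor $(1-\chi_k(u)\tau)$ has the rational solution $\phi_k(u) = q_k^u\>T_{k-1}(u+1)/T_k(u)$, and I look for $\psi_k = q_k^u p_k(u) \in \ker\Dt$ by solving the inhomogeneous cascade
$$(1-\chi_{k+1}(u)\tau)\cdots(1-\chi_N(u)\tau)\,\psi_k\,=\,\phi_k,$$
inverting one first-order factor at a time. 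At each step the bilinear identities of the previous paragraph, evaluated at the Bethe roots appearing in the denominators, cancel the apparent poles so that $p_k$ emerges as an honest polynomial. Tracking the top power of $u$ through the inversion yields $\deg p_k = l_{k-1}-l_k = \la_k$, where distinctness of the $q_a$'s is used to keep the leading coefficients from vanishing at any stage (as already seen for $k=N-1$: the leading term gives $c_{N-1}(1 - q_N/q_{N-1}) = 1$).

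For the Wronskian, apply the Abel--Liouville identity for the difference operator $\Dt = \sum_{k=0}^N(-1)^k c_k(u)\tau^k$: for any $N$ solutions $f_1,\dots,f_N$ of $\Dt f = 0$,
$$\Wr(f_1(u-1),\dots,f_N(u-1))\,=\,c_N(u-1)\,\Wr(f_1(u-2),\dots,f_N(u-2)).$$
Computing $c_N = \chi_1\cdots\chi_N$ in terms of the $T_a$'s and inserting the polynomial solutions $f_k = q_k^u p_k(u)$, the intermediate $T_a$-ratios for $a=1,\dots,N-1$ cancel against zeros of the $p_k$'s in the determinantal expansion (polynomiality having been established in the previous step), leaving the Wronskian in the form $\prod_{i=1}^N q_i^{u-1}\cdot C\cdot\prod_{s=1}^n(u-b_s)$ for a constant $C$. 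Comparing leading coefficients in $u$ on both sides --- where each $p_k$ contributes $u^{\la_k}$ and the $q_i^{u-j}$ factors produce the Vandermonde determinant $\det(q_i^{1-j})_{i,j=1}^N = \prod_{1\le i<j\le N}(q_j^{-1}-q_i^{-1})$ --- identifies the constant and thereby fixes the overall normalization of the $p_k$'s so that \Ref{Wr b} holds on the nose.

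The principal technical obstacle is the cascade construction in paragraph two: verifying that the iterated inversion of first-order factors yields polynomials of the prescribed degrees $\la_k$. Polynomiality requires the bilinear identities of paragraph one to cancel the residues produced by the denominators $T_a(u)$ at every Bethe root encountered, and the degree count requires careful control of leading coefficients through the iteration, where distinctness of $\qq$ is essential. Once polynomiality is in hand, the Wronskian identity in paragraph three reduces to a structural and leading-order determinantal computation.
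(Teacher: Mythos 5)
Your starting point is sound: by \Ref{chI} the operator $\Dt$ \emph{is} the product $(1-\chi_1\tau)\cdots(1-\chi_N\tau)$, your expression $\chi_a(u)=q_a\,T_{a-1}(u+1)T_a(u-1)/(T_{a-1}(u)T_a(u))$ is correct, and the Wronskian step is essentially fine: the discrete Abel identity gives $\Wr(u)=c_N(u)\Wr(u-1)$, and $c_N(u)=\chi_1(u)\chi_2(u-1)\cdots\chi_N(u-N+1)$ (note the shifts, which you dropped) telescopes to $q_1\cdots q_N\,T_0(u+1)/T_0(u)$ by itself --- the cancellation of the intermediate $T_a$'s happens inside $c_N$, not ``against zeros of the $p_k$'s''; then a rational $1$-periodic function is constant, and the leading coefficient is the Vandermonde $\det(q_i^{1-j})=\prod_{i<j}(q_j^{-1}-q_i^{-1})\ne0$, which fixes the normalization. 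All of that is routine once the polynomials $p_k$ with $\deg p_k=\la_k$ are known to exist.

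The genuine gap is exactly the step you flag and then defer: the claim that inverting the first-order factors in the cascade $(1-\chi_{k+1}\tau)\cdots(1-\chi_N\tau)\psi_k=\phi_k$ produces, after pole cancellation forced by the bilinear form of \Ref{BAE}, a function of the form $q_k^u\times(\text{polynomial of degree }\la_k)$. This is not a verification; it is the entire content of the theorem. A first-order inhomogeneous difference equation with rational data need not have a rational solution at all (e.g.\ $s(u)-c\,s(u-1)=1/u$ has none for generic $c$), so ``the residues cancel at every Bethe root encountered'' must be proved, and at intermediate levels of the cascade the functions one inverts are not $q_k^u$ times a polynomial but carry denominators $T_a$ at several levels, so one must show that the Bethe equations at each level conspire across the whole iteration. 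The known way to close this is the reproduction/QQ machinery: the Bethe equations at level $a$ are equivalent to the existence of a polynomial $\tilde T_a$ with $q_{a+1}T_a(u+1)\tilde T_a(u)-q_aT_a(u)\tilde T_a(u+1)=\mathrm{const}\cdot T_{a-1}(u+1)\,T_{a+1}(u)$, and an induction on these divisibility statements builds the quasi-exponential kernel; this is precisely what the references the paper cites for this theorem (Lemma~4.8 of \cite{MV1} and Proposition~7.6 of \cite{MV3}) carry out --- the paper itself gives no independent proof. So your proposal reproduces the right skeleton, but without an argument for polynomiality and the degree count through the iteration it does not yet prove the statement.
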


This is Proposition~7.6 in \cite{MV3}, which is a generalization of
Lemma~4.8 in \cite{MV1}.

\begin{rem}
If \,$\bla=(n,0\dots,0)$\>, then $V(\bb)_\bla$ is spanned by
\,$v_1\lsym\otimes v_1$. The corresponding fundamental difference
operator \,$\Dt(u,\tau)$ is given by \Ref{TR BA}. The polynomials
\,$p_2(u)\lc p_N(u)$ of Theorem \ref{thm on fund operator} are just constants
and the polynomial \,$p_1(u)$ is uniquely determined (up to proportionality)
by the condition \,$\Dt(u,\tau)\,q_1^u\,p_1(u)=0$.
\end{rem}

\subsection{Completeness of the Bethe ansatz}

\begin{thm}
\label{thm on completeness}
Assume that \,$\qq$ has distinct coordinates. Then for any \,$\bs\la$ and
generic \,$b_1\lc b_n$, there exists a collection of off-diagonal solutions
of the Bethe ansatz equations such that the corresponding Bethe vectors form
a basis of \;$V(\bb)_\bla$.
\end{thm}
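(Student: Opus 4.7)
The plan is to establish a bijection between off-diagonal solutions of the Bethe ansatz equations \Ref{BAE} (modulo the symmetry group $S_{l_1}\!\lsym\times\!S_{l_{N-1}}$ permuting equal-level variables) and points of a generic fiber of the discrete Wronski map $\pi^\qq_\bla:\Omql\to\C^n$, and then to deduce the basis property from the fact that the Bethe vectors are joint eigenvectors of \,$\Bcq$\, with pairwise distinct spectra.

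First I would fix \,$\bb$\, generically, so that $|b_i-b_j|>1$ for $i\ne j$ (which ensures irreducibility of $V(\bb)$ by Proposition \ref{irr}) and so that the fiber $(\pi^\qq_\bla)^{-1}(\aa)$ over $\aa=(\si_1(\bb)\lc\si_n(\bb))$ consists of exactly $\dim V(\bb)_\bla$ reduced points; this is the generic-fiber count for the finite discrete Wronski map (Proposition~3.1 of \cite{MTV5}).

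Second, I would build the bijection. In one direction, Theorem \ref{thm on fund operator} sends an off-diagonal solution \,$\ttbi$\, to polynomials \,$p_1\lc p_N$\, with $\deg p_k=\la_k$ satisfying the Wronskian identity \Ref{Wr b}; the quasi-exponential space they span is then a point of $(\pi^\qq_\bla)^{-1}(\aa)$. Conversely, from a fiber point \,$X$\, one has the difference operator $\D^{\Oql}(u,\tau)|_X$ annihilating the quasi-exponentials $q_k^u p_k(u,X)$; factoring it as in \Ref{chI} and reading off the roots of the functions built inductively from the \,$p_k$\, (via successive quasi-exponential Wronskians) produces a candidate tuple \,$\ttbi$\,, whose coordinates one then verifies solve \Ref{BAE} by comparing coefficients of $\D^{\Oql}|_X$ with the \,$c_k(u,\ttbi,\bb)$. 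This converse is essentially a reversal of Theorem \ref{thm on fund operator}, and off-diagonality is forced for generic \,$\bb$\, by reducedness of the fiber.

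Third, by \Ref{eig v} each off-diagonal \,$\ttbi$\, gives a Bethe vector \,$\om_\bla(\ttbi,\bb)$\, that is a joint eigenvector of all transfer-matrices, with eigenvalues encoded in the fundamental difference operator \,$\Dt$\,. Distinct fiber points give distinct operators, hence distinct spectra, so the nonzero Bethe vectors are automatically linearly independent. The main obstacle is therefore the non-vanishing of \,$\om_\bla(\ttbi,\bb)$\,: a standard route is a rationality-and-limit argument, observing that \,$\om_\bla$\, depends rationally on \,$\bb$\, and that for \,$\bb$\, tending to a highly degenerate configuration (for instance, with widely separated magnitudes) the weight function reduces to a nonzero standard-basis monomial; therefore non-vanishing holds on a Zariski-open set of \,$\bb$\,. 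Matching the cardinality of off-diagonal solutions modulo symmetry with $\dim V(\bb)_\bla$ then forces the Bethe vectors to span, hence to form a basis.
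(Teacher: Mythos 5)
Your strategy (count fiber points of the discrete Wronski map, match them with off-diagonal solutions, and use distinctness of spectra) is a genuinely different route from the paper, which instead proves the statement by an explicit degeneration: for $n=1$ the Bethe ansatz equations \Ref{BAE 1} are solved in closed form \Ref{Unique soln}, and for $n>1$ one sets $b_s=sy$, lets $y\to\infty$, observes that the equations split into clusters each equal to the $n=1$ system, deforms the explicit cluster solutions, and checks via \Ref{hWI-} that the lines of the resulting Bethe vectors tend to the lines $\C\,v_I$, $I\in\Il$, which simultaneously yields the correct number of solutions, nonvanishing of the Bethe vectors, and their linear independence for generic $\bb$. Measured against that, your proposal has genuine gaps at exactly the points where the work lies.

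First, the fiber count: Proposition 3.1 of \cite{MTV5} only gives finiteness of $\pi^\qq_\bla$; it does not say that a generic fiber has $\dim V(\bb)_\bla$ reduced points. That degree computation needs a separate argument (e.g.\ freeness of $\Oql$ over $\C[\Sig_1\lc\Sig_n]$ plus the graded character, or an independent enumeration), and you do not supply it. Second, the converse direction of your bijection, from a fiber point $X$ to an off-diagonal solution $\ttbi$ of \Ref{BAE}, is the hard step: one must show that for generic $\bb$ the intermediate (quasi-exponential) Wronskians built from the $p_k(u,X)$ have simple roots, no roots in common with the adjacent level, and no forbidden integer shifts. Your claim that ``off-diagonality is forced by reducedness of the fiber'' is not a justification --- reducedness of the scheme-theoretic fiber says nothing about the root configuration of the intermediate Wronskians, and this genericity is usually established precisely by exhibiting one good degenerate configuration, i.e.\ by the kind of limit analysis the paper performs. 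Third, the nonvanishing of $\om_\bla(\ttbi,\bb)$: you correctly identify it as the main obstacle, but the ``rationality-and-limit'' sketch is not a proof, because the solution $\ttbi$ itself moves with $\bb$, so one must control how the chosen solutions behave along the degeneration and show the limiting Bethe vector line is nonzero; that is exactly the content of Lemma \ref{lem BA} in the paper and is left ungrounded in your write-up. Until these three points are filled in, the proposal does not yet constitute a proof.
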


Theorem \ref{thm on completeness} is proved in Sections \ref{sec pr n=1}
and \ref{sec n>1}.

\subsection{Weight functions \,$W_I$\>}
For a function $f(t_1\lc t_k)$ of some variables, denote
\vv.3>
\be
\Sym_{t_1\lc t_k}\>f(t_1\lc t_k)\,=\,
\sum_{\si\in S_k\!}\,f(t_{\si_1}\lc t_{\si_k})\,.
\vv-.2>
\ee
Recall \,$\bla=(\la_1\lc\la_N)$ and \,$I=(I_1\lc I_N)$. Set
\;$\bigcup_{\>c=a+1}^{\,N}I_c=\>\{\>i^{(a)}_1\!\lsym<i^{(a)}_{l_a}\}$\>.
Introduce $t^{(0)}=(t^{(0)}_1\lc t^{(0)}_n) = (b_1\lc b_n)$\>.

\vsk.2>
For $I\in \Il$\>, we define the weight functions \,$W_{I}(\TT;\bb)$\>,
cf.~\cite{TV1, TV3}:
\vvn.4>
\beq
\label{hWI-}
W_I(\TT;\bb)\,=\,
\Sym_{\>t^{(1)}_1\!\lc\,t^{(1)}_{l_1}}\,\ldots\;
\Sym_{\>t^{(N-1)}_1\!\lc\,t^{(N-1)}_{l_{N-1}}}\,U_I(\TT;\bb)\,,
\vv.3>
\eeq
\be
U_I(\TT;\bb)\,=\,\prod_{a=1}^{N-1}\,\prod_{j=1}^{l_a}\,\biggl(
\prod_{\satop{j'=1}{i^{(a-1)}_{j'}\<<\>i^{(a)}_j}}^{l_{a-1}}
\!\!(t^{(a)}_j\<\<-t^{(a-1)}_{j'}+1)
\prod_{\satop{j'=1}{i^{(a-1)}_{j'}>\>i^{(a)}_j}}^{l_{a-1}}
\!\!(t^{(a)}_j\<\<-t^{(a-1)}_{j'})\,\prod_{j'=j+1}^{l_a}
\frac{t^{(a)}_j\<\<-t^{(a)}_{j'}\<\<+1}{t^{(a)}_j\<\<-t^{(a)}_{j'}}\,\biggr)\,.
\vv.3>
\ee
The universal $V(\bb)_\bla$-valued weight function is the function
\vvn.4>
\beq
\label{UNI w}
\om_\bla(\TT,\bb)\,=\,
\sum_{I\in\Il}\,W_I(\TT,\bb)\,v_I\,\in V(\bb)_\bla\,.
\eeq

\subsection{Proof of Theorem \ref{thm on completeness} for $n=1$}
\label{sec pr n=1}

If \,$n=1$, then \,$\bla=(0\lc 0, 1_{k+1},0\lc 0)$\>, where \,$1$ \,is at
the \,$k+1$-st position. If \,$k=0$, Theorem \ref{thm on completeness} holds
due to remarks in Sections \ref{Bethe ansatz equations associated with a weight
subspace} and \ref{Weight function and}.

Assume \,$k>0$. Then \,$\TT=(t^{(1)}_1,t^{(2)}_1\lc t^{(k)}_1)$ \,and
\,$\om_\bla(\TT,\bb) = v_{k+1}$. The Bethe ansatz equations are
\begin{align}
\label{BAE 1}
q_1\,(t^{(1)}_1 - b_1 + 1)\>(t^{(1)}_1 - t^{(2)}_{1})\, &{}=\,
q_2 (t^{(1)}_1 - b_1)(t^{(1)}_1 - t^{(2)}_{1} - 1)\,,
\\[4pt]
q_a\,(t^{(a)}_1 - t^{(a-1)}_{1} + 1)\>(t^{(a)}_1 - t^{(a+1)}_{1})\, &{}=\,
q_{a+1}\,(t^{(a)}_1 - t^{(a-1)}_{1})\>(t^{(a)}_1 - t^{(a+1)}_{1}-1)\,,
\notag
\\[4pt]
q_k\,(t^{(k)}_1-t^{(k-1)}_{1} + 1)\,&{}=\,q_{k+1}\,(t^{(k)}_1-t^{(k-1)}_{1})\,.
\notag
\\[-14pt]
\notag
\end{align}
Here the equations of the second group are labeled by \,$a=2\lc k-1$\>.
The Bethe ansatz equations have the unique solution
\beq
\label{Unique soln}
t^{(i)}_1=\,b_1\>+\>\sum_{j=1}^i\,\frac{q_j}{q_{k+1}-q_j}\,,\qquad i=1\lc k\,.
\vv.2>
\eeq
This solutions is off-diagonal. Theorem \ref{thm on completeness} for \,$n=1$
\,is proved.

\subsection{Proof of Theorem \ref{thm on completeness} for $n>1$}
\label{sec n>1}
Assume that $b_1\lc b_n$ depend on a parameter $y\in\C$, so that $b_s(y)=sy$\>.
The next lemma implies Theorem \ref{thm on completeness}.

\begin{lem}
\label{lem BA}
For \,$I\in\Il$\>, there exists an off-diagonal solution \;$\ttbi(y)$ of
the Bethe ansatz equations \Ref{BAE} such that the line generated by the Bethe
vector \,$\om\bigl(\ttbi(y),\bb(y)\bigr) $ tends to the line generated by
the vector \,$v_I$ as \,$y$ tends to infinity.
\end{lem}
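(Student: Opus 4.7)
The strategy is a perturbative argument: as $y\to\infty$ the points $b_s(y)=sy$ spread apart, so the Bethe ansatz system should decouple into $n$ copies of the $n=1$ system solved in Section~\ref{sec pr n=1}, with the decoupling pattern encoded by~$I$. Concretely, given $I=(I_1,\dots,I_N)\in\Il$, let $i_s$ denote the unique index with $s\in I_{i_s}$. For each $s\in\{1,\dots,n\}$ with $i_s>1$ and each $a=1,\dots,i_s-1$, I would allocate exactly one Bethe variable $t^{(a)}_{j(s,a)}$ to the ``cluster'' near $b_s$. The number of type-$a$ variables so allocated is $\#\{s:i_s>a\}=\la_{a+1}+\dots+\la_N=l_a$, matching the count in~\Ref{BAE}. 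Then make the substitution
\[
t^{(a)}_{j(s,a)}\,=\,sy\>+\>u^{(a)}_s\,,
\]
and take the $u^{(a)}_s$ as the new unknowns.

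Substituting into~\Ref{BAE}, one sees that for a variable in the cluster at $b_s$, the factors coming from other clusters (those involving $b_r$ or $t^{(a)}_{j(r,a)}$ with $r\ne s$) are of size $O(y)$ and cancel pairwise between the two sides of each equation up to corrections of order $y^{-1}$. What remains at leading order is precisely the $n=1$ Bethe system of Section~\ref{sec pr n=1} for the cluster at $b_s$ with weight concentrated in the $i_s$-th coordinate, in the unknowns $u^{(a)}_s$. By~\Ref{Unique soln} this system has the unique off-diagonal solution
\[
u^{(a)}_s\,=\,\sum_{j=1}^{a}\,\frac{q_j}{q_{i_s}-q_j}\,,\qquad a=1,\dots,i_s-1,
\]
and the full collection (over all $s$) is off-diagonal since distinct clusters stay apart at scale $y$.

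Next I would upgrade this formal leading-order solution to an actual one via the implicit function theorem at $1/y=0$: after clearing denominators, the Bethe equations become polynomial in the $u^{(a)}_s$ and $1/y$, and the Jacobian of the leading-order system with respect to the $u^{(a)}_s$ is block-diagonal in $s$, each block being the Jacobian of the (unique) $n=1$ solution in~\Ref{Unique soln}. Hence for $y$ sufficiently large there is an analytic family $\ttbi(y)$ of actual off-diagonal solutions extending the leading-order solution. Verifying invertibility of the $n=1$ Jacobian is the main routine calculation underlying this step; it follows from a direct computation using the explicit form of the solution.

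Finally, I would analyze the Bethe vector $\om_\bla(\ttbi(y),\bb(y))=\sum_{J\in\Il}W_J(\ttbi(y),\bb(y))\,v_J$ via the explicit formula~\Ref{hWI-}. In each monomial of $U_J$, the factors involving variables from different clusters contribute powers of $y$, while those from the same cluster are $O(1)$. Bookkeeping these powers shows that for each $J$ the top order of $W_J(\ttbi(y),\bb(y))$ is maximized exactly when the cluster partition encoded by $\ttbi(y)$ is compatible with the decomposition $J$, and this maximum is attained only for $J=I$; the top-order coefficient for $J=I$ then factors as a product of $n$ $n=1$ weight-function evaluations, each nonzero by Section~\ref{sec pr n=1}. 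Dividing $\om_\bla(\ttbi(y),\bb(y))$ by the leading power of $y$ thus yields a nonzero multiple of $v_I$ in the limit, proving the lemma. The chief obstacle is this last combinatorial step: one must identify the precise top-order contribution in the symmetrization of~\Ref{hWI-} along the chosen cluster partition, and verify strict dominance of $J=I$ over all other $J\in\Il$.
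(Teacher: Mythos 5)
Your proposal is correct and follows essentially the same route as the paper: cluster one type-$a$ variable near each $b_s(y)=sy$ for $a<i_s$, observe that as $y\to\infty$ the system decouples into $n$ copies of the $n=1$ system with its unique explicit off-diagonal solution \Ref{Unique soln}, deform back to an actual solution for large $y$, and read off the limit of the Bethe vector line from \Ref{hWI-}. The paper presents this only through the example $n=2$, $v_I=v_3\otimes v_2$ ("the general case is similar"), so your general formulation, including the explicit implicit-function-theorem step, is if anything slightly more detailed than the paper's own argument.
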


\begin{proof}
To simplify the notations we consider an example. The general case is similar.
Assume that \,$n=2$ and \,$v_I=\>v_3\otimes v_2$. Then
\,$\TT=(t^{(1)}_1,t^{(1)}_2,t^{(2)}_1)$. We look for a solution of the Bethe
ansatz equations in the form
\vvn.3>
\beq
\label{form}
t^{(i)}_1 =\,b_1(y) + v^{(i)}_1(y)\,,\quad i=1,2\,,\qquad \text{and}\qquad
t^{(1)}_2 =\,b_2(y) + v^{(1)}_2(y)\,.\kern-2em
\vv.3>
\eeq
Then the Bethe ansatz equations take the form
\vv.3>
\begin{gather}
\label{BAE 3}
\frac{v^{(1)}_2 + 1}{v^{(1)}_2}\,=\,\frac{q_2}{q_1} + O(y^{-1})\,,\qquad
\frac{v^{(1)}_1 + 1}{v^{(1)}_1}
\cdot\frac {v^{(1)}_1 - v^{(2)}_{1}}{v^{(1)}_1 - v^{(2)}_{1} - 1}\,=\,
\frac{q_2}{q_1} + O(y^{-1})\,,
\\
\frac{v^{(2)}_1 - v^{(1)}_{1} + 1}{v^{(2)}_1 - v^{(1)}_{1}}\,=\,
\frac{q_{3}}{q_{2}} + O(y^{-1})\,.
\notag
\end{gather}
As $y\to\infty$\>, this system of three equations splits into an equation
assigned to \,$b_1$ and a system of two equations assigned to \,$b_2$ according
to our choice \Ref{form}. Each of the limiting systems is the system
\Ref{BAE 1} of the Bethe ansatz equations for \,$n=1$ \,considered in
Section~\ref{sec pr n=1}. System \Ref{BAE 1} has a unique solution
\Ref{Unique soln}. By deforming that solution, we obtain a solution $\vvbi(y)$
of system \Ref{BAE} whose limit as \,$y\to\infty$ \,equals
\vvn.1>
\be
\Bigl(\,\frac{q_1}{q_3-q_1}\,,\,\frac{q_1}{q_2-q_1}\,,\,
\frac{q_1}{q_3-q_1}+\frac{q_2}{q_3-q_2}\,\Bigr)\,,
\vv.1>
\ee
see Section \ref{sec pr n=1}. Clearly, \,$\vvbi(y)$ corresponds to an
off-diagonal solution $\ttbi(y)$ of the Bethe ansatz equations as $y\to\infty$.
It is easy to see that the limit of the line generated by the Bethe vector
\,$\om\bigl(\ttbi(y),\bb(y)\bigr)$ as \,$y\to\infty$ is the line generated by
the vector \,$v_3\otimes v_2$, see formula \Ref{hWI-}.
\end{proof}

\section{Bethe ansatz for \,$\qq=\bs 1$}
\label{Sec BA q 1}

We consider the action of the Bethe algebra \,$\Bo$ on the subspace
$V(\bb)^\sing_\bla$ of singular vectors. We use the notation of
Section \ref{Sec BA q ne 1}.

\vsk.2>
For \,$\qq=\bs 1$, the Bethe ansatz equations are given by \Ref{BAE}
with $q_1\lsym=q_N=1$. Let \,$\om_\bla (\bs t,\bb)$ be the universal weight
function associated with the weight subspace $V(\bb)_\bla$, see \Ref{UNI w}.

\begin{thm}
\label{thm on Bethe ansatz q=1}
Let \;$\ttbi$ be an off-diagonal solution of the Bethe ansatz equations for
\,$\qq=\bs 1$. Then \,$\om_\bla(\ttbi,\bb)$ lies in \,$V(\bb)^\sing_\bla$.
\qed
\end{thm}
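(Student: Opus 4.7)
My plan is to show that $e_{j,j+1}\,\om_\bla(\ttbi,\bb) = 0$ for every $j = 1,\dots,N-1$. The approach will exploit the commutation of the Bethe algebra with $\glN$ at $\qq = \bs 1$ together with a rigidity property of the $C_{k,k}$-eigenvalues.

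The first step is to invoke Theorem \ref{BY} at $\qq = \bs 1$: the Bethe algebra $\Bo$ commutes with $U(\glN) \subset \Yn$, so each $e_{j,j+1}$ commutes with every element of $\Bo$, in particular with the operators $C_{k,k}$ from \Ref{formula for Se}. By Theorem \ref{thm on Bethe ansatz} applied with $\qq = \bs 1$, the Bethe vector $\om_\bla(\ttbi,\bb)$ is a common eigenvector of the transfer matrices $B_k^{\bs 1}(u)$ with eigenvalues $c_k(u,\ttbi,\bb)$ from \Ref{chI}, and hence of the $C_{k,k}$. The next input I would use is the $\qq = \bs 1$ polynomial analog of Theorem \ref{thm on fund operator}: the fundamental difference operator $\Dt(u,\tau)$ has kernel spanned by $N$ polynomials with degrees $d_i = \la_i + N - i$, as developed in Section \ref{spoly} via the Wronskian identity \Ref{Wr coef q=1}. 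This matches $\Dt(u,\tau)$ coefficient by coefficient with the operator controlling the scalars $\chi_k^\bla$ in Theorem \ref{thm Q=1}(iii), so that the $C_{k,k}$-eigenvalue of $\om_\bla(\ttbi,\bb)$ equals $\chi_k^\bla$, where $\chi_k^\mu$ is determined by
\[
\sum_{k=0}^N (-1)^k \chi_k^\mu \prod_{j=0}^{N-k-1}(x-j) \,=\, \prod_{s=1}^N (x-\mu_s-N+s).
\]
Since the roots $\mu_s+N-s$ are strictly decreasing for any partition $\mu$, the tuple $(\chi_k^\mu)_{k=1}^N$ determines $\mu$ uniquely.

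To conclude, I would argue by contradiction: suppose $w := e_{j,j+1}\,\om_\bla(\ttbi,\bb) \ne 0$ for some $j$. By the commutation, $w$ is a common eigenvector of every $C_{k,k}$ with eigenvalue $\chi_k^\bla$, and has weight $(\la_1,\dots,\la_j + 1,\la_{j+1} - 1,\dots,\la_N)$. Decomposing $V(\bb) \cong \bigoplus_{\bla''} V_{\bla''} \otimes M_{\bla''}$ as a $\glN$-module, summed over partitions $\bla''$, the commutation of $\Bo$ with $\glN$ forces each $C_{k,k}$ to act as the scalar $\chi_k^{\bla''}$ on the isotypic summand $V_{\bla''} \otimes M_{\bla''}$. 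The uniqueness of the $\chi_k$-tuple then forces $w \in V_\bla \otimes M_\bla$; but $\bla$ is the highest weight of $V_\bla$, so the strictly higher weight $(\la_1,\dots,\la_j + 1,\la_{j+1} - 1,\dots,\la_N)$ does not occur in $V_\bla$. Hence $w = 0$, contradicting $w \ne 0$.

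The hard part will be the $\qq = \bs 1$ polynomial analog of Theorem \ref{thm on fund operator}, which identifies the kernel of $\Dt(u,\tau)$ at a Bethe solution as an $N$-dimensional space of polynomials with the characteristic degree profile $d_i = \la_i + N - i$. As $\qq$ degenerates to $\bs 1$, the quasi-exponentials $q_i^u$ collapse and must be replaced by polynomial solutions produced directly from the Bethe ansatz equations at $\qq = \bs 1$. Once this kernel description is in place, the commutation-plus-isotypic argument above completes the proof quickly.
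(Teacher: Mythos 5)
Your argument is essentially correct, but it takes a genuinely different route from the paper, which does not prove Theorem \ref{thm on Bethe ansatz q=1} at all: it quotes it from \cite{TV2} (Lemma 5.3) and \cite{MTV1} (Proposition 6.2), where singularity is obtained by a direct computation with the weight function \Ref{hWI-}, showing that the raising operators applied to $\om_\bla(\TT,\bb)$ produce combinations whose coefficients vanish on the Bethe ansatz equations. You instead deduce singularity indirectly from three structural inputs, all of which the paper quotes anyway for other purposes: the eigenvector property (Theorem \ref{thm on Bethe ansatz}, which has no hypothesis of distinct coordinates and so applies at $\qq=\bs 1$), the polynomial kernel of the fundamental operator at a Bethe solution — note that the ``hard part'' you flag is exactly Theorem \ref{thm on fund operator q=1}, i.e.\ \cite{MV1}, Lemma 4.8, so it is available and need not be re-proved — and the rigidity of the scalars $C_{k,k}$ on singular weight subspaces (Theorem \ref{thm Q=1}(iii)). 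Given these, your isotypic-plus-weight argument is sound: the tuple $(\chi^\mu_k)$ determines the partition $\mu$, so a $C_{k,k}$-eigenvector with tuple $\chi^\bla$ and weight $\bla$ must lie in the $\bla$-isotypic block and hence be singular. The tradeoff is clear: the cited direct proof is elementary, self-contained at the level of the weight function, and independent of \cite{MV1} and \cite{MTV2}; your proof is short given the machinery already present in the paper, but leans on those nontrivial inputs and covers the dominant case only (which is the case actually meant here, since \Ref{basis p} presupposes a partition).

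Two small points to tighten. First, commutation of $\Bo$ with $U(\glN)$ alone does not give the value of $C_{k,k}$ on an isotypic block $V_{\bla''}\<\ox M_{\bla''}$; you need Theorem \ref{thm Q=1}(iii), transported from $(\V^S)^\sing_{\bla''}$ to $V(\bb)^\sing_{\bla''}$ (via the surjection of Proposition \ref{VSVb}, or by continuity in $\bb$, the $\glN$-structure of $V(\bb)$ being independent of $\bb$), and only then does commutation with the lowering operators spread that scalar over the whole block. Second, the identification of the $C_{k,k}$-eigenvalue of $\om_\bla(\ttbi,\bb)$ with $\chi^\bla_k$ should be routed through Lemma \ref{thm qq=1}: since $\Dt$ and $\DOl(u,\tau)$ are both order-$N$ difference operators with constant term $1$ annihilating the $N$-dimensional space of polynomials furnished by Theorem \ref{thm on fund operator q=1}, whose discrete Wronskian \Ref{Wr b q=1} is not identically zero, they coincide, and Lemma \ref{thm qq=1}(iii) then produces exactly the scalars of Theorem \ref{thm Q=1}(iii) for the same $\bla$. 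Finally, treat separately the trivial case $\om_\bla(\ttbi,\bb)=0$, since Theorem \ref{thm on Bethe ansatz} assumes the Bethe vector is nonzero.
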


The statement is Lemma~5.3 in \cite{TV2} or Proposition~6.2 in \cite{MTV1}.
For $N=2$, the result is established in~\cite{FT}, and for $N=3$ in~\cite{KR2}.

\begin{thm}[{\cite[Lemma~4.8]{MV1}}]
\label{thm on fund operator q=1}
Let \;$\ttbi$ be an off-diagonal solution of the Bethe ansatz equations for
\,$\qq=\bs 1$. Consider the the associated fundamental difference operator
\,$\Dt$, see \Ref{fund op}. There exist polynomials $f_k(u)\in\C[u]$\>,
\,$k=1\lc N$, of the form described in \Ref{basis p}, such that
\;$\deg f_k(u) = \la_k + N-k$\>, \,\,$\Dt(u,\tau)f_k(u) = 0$\>, \,and
\beq
\label{Wr b q=1}
\Wr\>\bigl(f_1(u-1)\lc f_N(u-1)\bigr)\,=\,
\prod_{1\le i<j\le N}(\la_j-\la_i +i-j)\;\prod_{s=1}^n\,(u-b_i)\,.\kern-2em
\vv-1.4>
\eeq
\qed
\end{thm}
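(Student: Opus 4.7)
The plan is to exploit the factorization $\Dt = \prod_{a=1}^N (1 - \chi_a(u)\tau)$ from \Ref{chI}. Set $T_0(u) = \prod_{s=1}^n (u-b_s)$, $T_a(u) = \prod_{j=1}^{l_a}(u - t^{(a)}_j)$ for $1 \le a \le N-1$, and $T_N(u) \equiv 1$. A direct computation at $\qq = \bs 1$ yields
\be
\chi_a(u) \,=\, \frac{T_{a-1}(u+1)\,T_a(u-1)}{T_{a-1}(u)\,T_a(u)}\,, \qquad a=1,\ldots,N,
\ee
and in particular $\chi_N(u) = T_{N-1}(u+1)/T_{N-1}(u)$ since $T_N\equiv 1$.

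The construction of the $f_k$ proceeds from the bottom. Taking $f_N(u) := T_{N-1}(u+1)$ gives a monic polynomial of degree $l_{N-1} = \la_N = d_N$ annihilated by the rightmost factor $(1-\chi_N\tau)$, hence by $\Dt$. I would then build $f_{N-1},\ldots,f_1$ iteratively. Having already produced $f_{k+1},\ldots,f_N$, I seek $f_k$ of degree $d_k = \la_k + N - k$ in $\ker\Dt$ and linearly independent from the previous ones; by the factorization this reduces to solving a first-order inhomogeneous difference equation $(1-\chi_k\tau) f_k = g_k$, where $g_k$ is a rational function lying in $\ker[(1-\chi_{k+1}\tau)\cdots(1-\chi_N\tau)]$. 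A natural ansatz is $g_k(u) = T_{k-1}(u+1)\,R_k(u)/T_k(u)$ for an undetermined polynomial $R_k$. The Bethe ansatz equations at $\qq = \bs 1$ enter precisely as the conditions that make the resulting recursion for the coefficients of $f_k$ produce a genuine polynomial of degree $d_k$: they guarantee cancellation of the potential polar singularities at each Bethe root $t^{(a)}_j$. Once the $f_k$ are found, they can be put in the normalized form \Ref{basis p} by subtracting suitable scalar multiples of $f_{k+1},\ldots,f_N$.

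The Wronskian identity \Ref{Wr b q=1} follows from identifying $\Dt$ with the operator $\DOl$ of \Ref{DOla p} built from $(f_1,\ldots,f_N)$: both are monic difference operators of order $N$ with the same $N$-dimensional kernel spanned by $f_1,\ldots,f_N$, so they coincide. Matching the $\tau^N$-coefficients, the coefficient from $\Dt$ telescopes to $c_N(u) = T_0(u+1)/T_0(u)$, while the coefficient from $\DOl$ equals the Casoratian ratio $\Wr(f_1,\ldots,f_N)(u)/\Wr(f_1(u-1),\ldots,f_N(u-1))$. This forces $\Wr(f_1(u-1),\ldots,f_N(u-1)) = C\prod_{s=1}^n(u-b_s)$ for some constant $C$, and $C = \prod_{i<j}(\la_j - \la_i + i - j)$ is then read off from the leading $u^n$-coefficient via a standard generalized Vandermonde\textendash Schur computation in the exponents $d_i$.

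The main obstacle is the inductive construction: verifying that the first-order recursion at each $k$ closes up to a polynomial of exactly degree $d_k$ requires a delicate residue analysis at all Bethe roots, which the Bethe ansatz equations are precisely tuned to enable. A cleaner alternative would be a deformation argument from Theorem \ref{thm on fund operator}: as $\qq \to \bs 1$ along a path of distinct coordinates, the quasi-exponential solutions $q_k^u p_k(u;\qq)$ coalesce, and suitable finite differences in $\qq$ extract polynomial solutions of the increased degrees $\la_k + N - k$ in the limit. This sidesteps the residue bookkeeping at the cost of an analytic continuation argument for off-diagonal BAE solutions, which would typically require a genericity assumption on $\bb$.
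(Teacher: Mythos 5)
The paper itself gives no argument here: the statement is quoted with the citation \cite[Lemma~4.8]{MV1}, so what you are really attempting is a reconstruction of that lemma's proof, and your outline does follow the standard strategy behind it. Your preparatory computations are correct: with $T_0(u)=\prod_s(u-b_s)$, $T_a(u)=\prod_j(u-t^{(a)}_j)$, $T_N\equiv 1$, one indeed has $\chi_a(u)=T_{a-1}(u+1)T_a(u-1)/\bigl(T_{a-1}(u)T_a(u)\bigr)$ at $\qq=\bs1$, the polynomial $f_N(u)=T_{N-1}(u+1)$ of degree $\la_N$ is killed by the rightmost factor, and the final step (once a polynomial kernel basis of the stated degrees is in hand, identify $\Dt$ with $\D^{\Ol}$ of \Ref{DOla p}, compare $\tau^N$-coefficients, use that a $1$-periodic rational function is constant, and read off the constant $\prod_{i<j}(\la_j-\la_i+i-j)$ from leading terms) is sound.

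There are, however, two genuine problems. First, your reduction is stated with the factors in the wrong order: in $\Dt=(1-\chi_1\tau)\cdots(1-\chi_N\tau)$ the factor $(1-\chi_N\tau)$ acts first, so from $(1-\chi_k\tau)f_k=g_k$ with $g_k\in\ker\bigl[(1-\chi_{k+1}\tau)\cdots(1-\chi_N\tau)\bigr]$ you cannot conclude $\Dt f_k=0$, since the factors do not commute. The correct reduction is $\bigl[(1-\chi_{k+1}\tau)\cdots(1-\chi_N\tau)\bigr]f_k=g_k$ with $g_k\in\ker(1-\chi_k\tau)$; note that your ansatz $g_k=T_{k-1}(u+1)R_k(u)/T_k(u)$ with constant $R_k$ lies in the latter kernel, not the former, so the construction is internally inconsistent as written (though repairable). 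Second, and decisively, the entire content of the lemma is the step you defer: proving that these inhomogeneous difference equations admit solutions which are \emph{polynomials} of degree exactly $\la_k+N-k$. This is precisely where the off-diagonality of $\ttbi$ and the Bethe ansatz equations must enter, through residue cancellations at the points $t^{(a)}_j$ guaranteeing regularity and termination at the correct degree; you explicitly label this ``the main obstacle'' and do not carry it out, so no proof has been given — what remains unproved is exactly \cite[Lemma~4.8]{MV1}. The suggested fallback, degenerating Theorem \ref{thm on fund operator} as $\qq\to\bs1$, is likewise only a sketch: in the limit the quasi-exponential kernel can degenerate (degrees may drop, linear independence may be lost), and, as you concede, it would at best cover generic $\bb$ and generic solutions, whereas the theorem is asserted for every off-diagonal solution of \Ref{BAE} at $\qq=\bs1$.
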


\begin{thm}
\label{thm on compl q=1}
For generic \;$b_1\lc b_n$ there exists a collection of off-diagonal solutions
of the Bethe ansatz equations for $\qq=\bs 1$ such that the corresponding Bethe
vectors form a basis of $V(\bb)^\sing_\bla$.
\end{thm}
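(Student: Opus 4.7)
The plan is to derive Theorem \ref{thm on compl q=1} from Theorem \ref{thm on completeness} by a continuity argument in \,$\qq$\>. The key observation is that, by Theorem \ref{BY}, for generic \,$\qq$ the Bethe algebra \,$\Bcq$ commutes only with \,$U(\h)$\>, whereas \,$\Bo$ commutes with all of \,$U(\glN)$\>. Consequently, along any deformation \,$\qq(\eps)\to\bs 1$\>, the joint \,$\Bcq$-eigenspaces in \,$V(\bb)_\bla$ must reorganize in the limit into \,$\glN$-isotypic components, with the bounded limits of the Bethe vectors supplying singular vectors and the escaping ones their \,$\glN$-descendants.

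Concretely, I would fix \,$\xi_1\lc\xi_N$ pairwise distinct and set \,$\qq(\eps)=(1+\eps\xi_1\lc 1+\eps\xi_N)$\>. For each small \,$\eps>0$\>, Theorem \ref{thm on completeness} supplies off-diagonal solutions \,$\ttbi_\alpha(\eps)$ of the BAE \Ref{BAE} whose Bethe vectors form a basis of \,$V(\bb)_\bla$ for generic \,$\bb$\>. Each family \,$\ttbi_\alpha(\eps)$ either remains bounded as \,$\eps\to 0$ or has some coordinates tending to infinity. The bounded limits are off-diagonal solutions of the \,$\qq=\bs 1$ equations and, by Theorem \ref{thm on Bethe ansatz q=1}, their Bethe vectors lie in \,$V(\bb)^\sing_\bla$\>. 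The escaping families, after appropriate rescaling, should correspond to sequences of lowering operators \,$e_{j,i}$ with \,$i<j$ applied to singular Bethe vectors, so that the count of bounded families matches \,$\dim V(\bb)^\sing_\bla$ as given by \Ref{forMulaSing}\>. Linear independence of the limits is then secured by Theorem \ref{thm on fund operator q=1}: each off-diagonal \,$\qq=\bs 1$ solution produces, via the Wronskian identity \Ref{Wr b q=1}, a distinct point \,$X\in\Oml$ in the fiber of the discrete Wronski map \,$\pi_\bla$ over \,$(\si_1(\bb)\lc\si_n(\bb))$\>; the associated fundamental difference operators \,$\Dt$ are therefore distinct, which forces distinct joint \,$\Bo$-eigenvalues on the corresponding Bethe vectors and hence their linear independence.

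The hard part is the asymptotic/cluster analysis of the solutions \,$\ttbi_\alpha(\eps)$ as \,$\eps\to 0$\>: one must rule out coincidences that would violate the off-diagonal condition in the limit, identify the escaping strata with orbits of the lowering operators \,$e_{j,i}$\>, and verify that exactly \,$\dim V(\bb)^\sing_\bla$ out of the \,$\dim V(\bb)_\bla$ families stay bounded. This is the singular-vector counterpart of the cluster analysis carried out in Section \ref{sec n>1}, and translating the \,$U(\glN)$-commutation of \,$\Bo$ (versus the mere \,$U(\h)$-commutation of \,$\Bcq$) into explicit dynamics of the Bethe roots under the deformation is the delicate point on which the argument hinges.
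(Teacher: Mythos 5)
The decisive steps of your plan are exactly the ones you defer: deciding which of the families \,$\ttbi_\alpha(\eps)$ stay bounded as \,$\qq(\eps)\to\bs 1$, showing that their limits are still \emph{off-diagonal} solutions of \Ref{BAE} at \,$\qq=\bs 1$ with nonvanishing Bethe vectors, identifying the escaping strata with \,$\glN$-descendants, and verifying that exactly \,$\dim V(\bb)^\sing_\bla$ families survive. This is not a technical remainder but the whole content of the theorem: in the \,$\qq\to\bs 1$ limit Bethe roots are well known to escape to infinity in clusters, bounded families may collide or become diagonal in the limit, and even when the roots converge the weight function \,$\om_\bla$ may vanish at the limit point, so the limit of the line spanned by \,$\om_\bla(\ttbi_\alpha(\eps),\bb)$ need not be spanned by a Bethe vector of the limiting solution. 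The heuristic that \,$\Bo$ commutes with \,$U(\glN)$ while \,$\Bcq$ only commutes with \,$U(\h)$ explains why such a reorganization is plausible, but it does not produce the required count or the off-diagonality of the limits. There is also a secondary gap in the independence argument: Theorem \ref{thm on fund operator q=1} attaches a point of \,$\Oml$ to each off-diagonal solution, but you assert without proof that distinct solutions give distinct points, hence distinct operators \,$\Dt$ and distinct \,$\Bo$-eigenvalues.

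The paper sidesteps all of this by a different degeneration: it keeps \,$\qq=\bs 1$ fixed and scales the evaluation parameters, \,$b_s=y\>d_s$ and \,$t^{(i)}_j=y\>v^{(i)}_j$, letting \,$y\to\infty$. In this limit the \XXX/ Bethe ansatz equations become the Bethe ansatz equations of the Gaudin model associated with \,$V(\bs d)^\sing_\bla$, for which completeness is already known: by \cite[Theorem 6.1]{MV2}, for generic \,$\bs d$ there are off-diagonal, multiplicity-free solutions whose Gaudin Bethe vectors form a basis of \,$V^\sing_\bla$. Deforming these solutions to finite \,$y$ yields off-diagonal solutions of \Ref{BAE} for \,$\qq=\bs 1$ whose Bethe vectors tend projectively to that Gaudin basis, hence form a basis of \,$V(\bb)^\sing_\bla$ for generic \,$\bb$. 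In other words, the paper imports the hard counting and nondegeneracy input from the Gaudin completeness theorem, whereas your route would require reproving an equivalent statement through an asymptotic analysis of Bethe roots as \,$\qq\to\bs 1$, which is missing from the proposal.
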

\begin{proof}
Assume that $b_1\lc b_n$ depend on a parameter $y\in\C$, so that $b_s=yd_s$
for given \,$d_1\lc d_n$, and \,$y$ tends to infinity. We look for a solution
of equations \Ref{BAE} for \,$\qq=\bs 1$ in the form
\,$t^{(i)}_j=y\>v^{(i)}_j$. Then the equations are
\beq
\label{BAE 5}
\sum_{s=1}^n\,\frac{1}{v^{(1)}_j - d_s}\>-\>
\sum_{\satop{j'=1}{j'\neq j}}^{l_1}\,\frac2{v^{(1)}_j - v^{(1)}_{j'}}\>+\>
\sum_{j'=1}^{l_2} \frac {1}{v^{(1)}_j - v^{(2)}_{j'}}\,=\,0+O(y^{-1})\,,
\eeq
\be
\sum_{j'=1}^{l_{a-1}}\,\frac{ 1}{v^{(a)}_j - v^{(a-1)}_{j'}}\>-\>
\sum_{\satop{j'=1}{j'\neq j}}^{l_a}\,\frac2{v^{(a)}_j - v^{(a)}_{j'} }\>+\>
\sum_{j'=1}^{l_{a+1}}\,\frac{1}{v^{(a)}_j - v^{(a+1)}_{j'}}\,=\,0+O(y^{-1})\,.
\vv.3>
\ee
In the limit \,$y\to\infty$\>, equations \Ref{BAE 5} tend to the Bethe ansatz
equations of the Gaudin model associated with $V({\bs d})_\bla^\sing$,
considered in \cite{MV2}. By \cite[Theorem 6.1]{MV2}, for generic
\,$\bs d=(d_1\lc d_n)$ \>there exists a collection of off-diagonal
multiplicity-free solutions \,$\vvbi$ of system \Ref{BAE 5} at \,$y=\infty$
\>such that the corresponding Bethe vectors \,$\om_\bla^\Gaud(\vvbi,{\bs d})$
of the Gaudin model, see formula~(4) in \cite{MV2}, form a basis of
$V^\sing_\bla$. By deforming these solutions, we get a collection of solutions
\,$\vvbi(y)$ of system \Ref{BAE 5}, and hence a collection of off-diagonal
solutions \,$\ttbi(y)= y\>\vvbi(y)$ of system \Ref{BAE} for \,$\qq=\bs 1$.
It easily follows from formula \Ref{hWI-} and formula~(4) in \cite{MV2} that
the lines generated by the Bethe vectors \,$\om_\bla(\ttbi(y),\bb(y))$ tend
respectively to the lines generated by the Bethe vectors
\,$\om_\bla^\Gaud(\vvbi,{\bs d})$. Hence the Bethe vectors
\,$\om_\bla(\ttbi(y),\bb(y))$ form a basis of \,$V(\bb)_\bla^\sing$
as \,$y\to\infty$. This proves Theorem \ref{thm on compl q=1}.
\end{proof}

\section{Proofs of Theorems \ref{1 thm} and \ref{2 thm}}
\label{sec proofs}

\subsection{Proof of Theorem \ref{1 thm}}
\label{proof thm main 1}

Let a polynomial \,$R(\Fql_{k,s})$ in generators \,$\Fql_{k,s}$ equal zero
in \,$\Oql$. Consider $R(\Bql_{k,s})$ as a polynomial in $\zzz$ with
values in $\End\bigl((V^{\otimes n})_\bla\bigr)$. By Theorems
\ref{thm on fund operator} and \ref{thm on completeness}, this polynomial
equals zero for generic values of \,$\zzz$. Hence, \,$R(\Fql_{k,s})$
equals zero identically and the map
\,$\mu^\qq_\bla:\Oql\<\to\Bcq\bigl((\V^S)_\bla\bigr)$ is well-defined.

\vsk.2>
Let a polynomial \,$R(\Fql_{k,s})$ be a nonzero element of \,$\Oql$.
By Theorems \ref{thm on fund operator} and \ref{thm on completeness},
it means that \,$R(\Bql_{k,s})$ is nonzero in \,$\Bcq\bigl((\V^S)_\bla\bigr)$.
This shows that \,$\mu^\qq_{\bs\la}$ is injective. Since the elements
\,$\Bql_{k,s}$ generate the algebra $\Bcq\bigl((\V^S)_\bla\bigr)$, the map
\,$\mu^\qq_{\bs\la}$ is surjective. By comparing formulae \Ref{BN} and
\Ref{FN}, we conclude that \,$\mu^\qq_{\bs\la}$ is a homomorphism of
the \,$\C[\si_1\lc\si_n]$-module \,$\Oql$ to the \,$\Czs$-module
$\Bcq\bigl((\V^S)_\bla\bigr)$. Since \;$\deg\>\Fql_{k,s} =\deg\>\Bq_{k,s}$\>,
the homomorphism \,$\mu^\qq_{\bs\la}$ is filtered. These remarks prove
part (i) of Theorem \ref{1 thm}.

\vsk.2>
Consider the map \,$\nu^\qq_\bla:\Oql\to\>(\V^S)_\bla$,
$f\mapsto\mu^\qq_\bla(f)\,v_\bla$. The kernel of \,$\nu^\qq_\bla$ is an ideal
in $\Oql$ which has zero intersection with \,$\C[\si_1\lc\si_n]$ \,and,
therefore, is the zero ideal. Since \,$\ch_{\>\Oql}(t)=\ch_{\>(\V^S)_\bla}(t)$,
we conclude that \,$\nu^\qq_\bla$ is a linear isomorphism. This gives part (ii)
of Theorem \ref{1 thm}.
\qed

\subsection{Proof of Theorem \ref{2 thm}}
\label{proof thm main 2}
The proof of Theorem \ref{2 thm} is similar to the proof of
Theorem \ref{1 thm}.

\vsk.2>
Namely, let a polynomial \,$R(G_{k,s})$ in generators \,$G_{k,s}$ equal zero
in $\Ol$. Consider \,$R(C_{k,s})$ as a polynomial in \,$\zzz$ with values
in $\End\bigl((V^{\otimes n})^\sing_\bla\bigr)$. By Theorems
\ref{thm on fund operator q=1} and \ref{thm on compl q=1}, this polynomial
equals zero for generic values of \,$\zzz$. Hence, \,$R(C_{k,s})$ equals
zero identically and the map $\mu_\bla:\Ol\to\Bo\bigl((\V^S)^\sing_\bla\bigr)$
is well-defined.

\vsk.2>
Let a polynomial \,$R(G_{k,s})$ be a nonzero element of \,$\Ol$.
By Theorems \ref{thm on fund operator q=1} and \ref{thm on compl q=1},
it means that \,$R(C_{k,s})$ is nonzero in $\Bo\bigl((\V^S)^\sing_\bla\bigr)$.
This shows that \,$\mu_\bla$ is injective. Since the elements \,$C_{k,s}$
generate the algebra \,$\Bo\bigl((\V^S)^\sing_\bla\bigr)$, the map
\,$\mu_\bla$ is surjective. By comparing formulae \Ref{BN} and \Ref{FN},
we conclude that \,$\mu_\bla$ is a homomorphism of the
\,$\C[\si_1\lc\si_n]$-module \,$\O_\bla$ to the \,$\Czs$-module
\,$\Bo\bigl((\V^S)^\sing_\bla\bigr)$. Since \;$\deg\>G_{k,s} =\deg\>C_{k,s}$,
the homomorphism \,$\mu_\bla$ is filtered. These remarks prove part (i) of
Theorem \ref{2 thm}.

\vsk.2>
Consider the map \,$\nu_\bla:\Ol\to\>(\V^S)^\sing_\bla$,
$f \mapsto \mu_\bla(f)\,v_\bla$. The kernel of $\nu_\bla$ is an ideal
in \,$\Ol$ which has zero intersection with \,$\C[\si_1\lc\si_n]$ and,
therefore, is the zero ideal. Since
\,$t^{\sum_{i=1}^N(i-1)\la_i}\ch_{\>\O_\bla}(t)=\ch_{\>(\V^S)^\sing_\bla}(t)$,
we conclude that \,$\nu_\bla$ is a linear isomorphism. This gives part (ii) of
Theorem \ref{2 thm}.
\qed

\section{Space \,$\DVe$ }
\label{alg sec A}

\subsection{Definitions}
\label{sec S-actions A}
Recall $\V=V\otimes \C[z_1,\dots,z_n]$ and the $S_n$-action on $V\<$-valued
functions of $z_1,\dots,z_n$ defined by formula \Ref{Sn+}. We denote by $\V^A$
the subspace of the $S_n$-skew-invariants in $\V$. The space $\V^A$ is a
filtered space.

Let
\be
D\,=\!\prod_{1\le i<j\le n}\!(z_j - z_i + 1)\,.
\vv.3>
\ee
We denote by $\Di\V$ the space of $V\<$-valued functions of $z_1,\dots,z_n$
of the form $\Di f$, \,$f\<\in\V$, and by $\DVe$ the space of $V\<$-valued
functions of $z_1,\dots,z_n$ of the form $\Di f$, \,$f\<\in\V^A$.

\begin{lem}[{\cite[Lemma 2.9]{GRTV}}]
\label{S4}
A $V$-valued function $f$ of $z_1,\dots,z_n$ is skew-invariant with respect
to the $S_n$-action if and only if the function \,$\Di f$ \,is invariant with
respect to the $S_n$-action. \qed
\end{lem}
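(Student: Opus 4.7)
The plan is a direct calculation using the explicit definition \eqref{Sn+} of the $S_n$-action. Since adjacent transpositions $s_i$ generate $S_n$, it suffices to prove the equivalence for each $s_i$ separately.

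First I would rewrite \eqref{Sn+} in the compact form
\be
s_i f \,=\, P^{(\ii+1)}\,\tau_i f\,+\,\frac{f-\tau_i f}{z_i-z_{i+1}}\,,
\ee
where $\tau_i$ denotes the transposition of the variables $z_i,z_{i+1}$ only and $P^{(\ii+1)}$ denotes the permutation on the tensor factors. Next I would compute the scalar
$\tau_i D$. All factors of $D=\prod_{k<l}(z_l-z_k+1)$ not involving the pair $(i,i+1)$ permute among themselves under $\tau_i$, so they contribute $D/(z_{i+1}-z_i+1)$, while the single factor $z_{i+1}-z_i+1$ is replaced by $z_i-z_{i+1}+1=-(z_{i+1}-z_i-1)$. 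Hence
\be
\tau_i D\,=\,-\,D\,\frac{z_{i+1}-z_i-1}{z_{i+1}-z_i+1}\,.
\ee

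Next I would set $g=\Di f$ (so $f=Dg$) and substitute into $s_i(Dg)$, obtaining
\be
s_i(Dg)\,=\,(\tau_i D)\,P^{(\ii+1)}\,\tau_i g\,+\,\frac{Dg-(\tau_i D)\,\tau_i g}{z_i-z_{i+1}}\,.
\ee
Under the assumption $s_i g=g$, the relation $P^{(\ii+1)}\tau_i g=g-(g-\tau_i g)/(z_i-z_{i+1})$ lets one substitute for $P^{(\ii+1)}\tau_i g$ in the formula above. After cancellation the $\tau_i g$ terms disappear and one is left with the simple identity
\be
s_i(Dg)\,=\,\frac{g\,\bigl[(\tau_i D)(z_i-z_{i+1}-1)+D\bigr]}{z_i-z_{i+1}}\,=\,-\>Dg\,,
\ee
using $(\tau_i D)(z_i-z_{i+1}-1)+D=D(z_{i+1}-z_i)$ from the explicit form of $\tau_i D$ above. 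This shows the ``if'' direction. Since every step in the substitution is a linear invertible manipulation in $g$ and $\tau_i g$, reading the same calculation backward (or, equivalently, repeating it for $s_i(Dg)=-Dg$) gives the ``only if'' direction.

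The only real obstacle is bookkeeping in the computation of $\tau_i D$ and then patiently simplifying $s_i(Dg)$ using $s_i g=g$; once the scalar $\tau_i D$ is in hand, everything reduces to a short rational-function identity in $z_i-z_{i+1}$.
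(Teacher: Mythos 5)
Your computation is correct, and it supplies a self-contained verification where the paper itself gives no argument at all: Lemma \ref{S4} is simply quoted from \cite[Lemma 2.9]{GRTV}. The reduction to the elementary transpositions is legitimate, since formula \Ref{Sn+} is asserted to define an $S_n$-action and the $s_i$ generate $S_n$; your compact form $s_i f=P^{(\ii+1)}\tau_i f+(f-\tau_i f)/(z_i-z_{i+1})$ is an exact rewriting of \Ref{Sn+}, and your value of $\tau_i D$ is right (the mixed factors pair off, only $z_{i+1}-z_i+1$ changes sign and shifts). The elimination of $P^{(\ii+1)}\tau_i g$ and the final scalar identity $(\tau_i D)(z_i-z_{i+1}-1)+D=D(z_{i+1}-z_i)$ check out, giving $s_i(Dg)=-Dg$ from $s_ig=g$. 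The one place you wave your hands is the converse ("reading the calculation backward"); it does work, and the cleanest way to see it is to note that your manipulations actually prove the identity $s_i(Dg)+Dg=(\tau_i D)\,(s_ig-g)$ for arbitrary $g$ (the $\tau_i g$-terms cancel identically, as your computation shows). Since $\tau_i D$ is a nonzero rational function, this single identity gives both implications at once: $s_i(Dg)=-Dg$ if and only if $s_ig=g$. Equivalently, one can solve your substitution step for $P^{(\ii+1)}\tau_i g$ from $s_i(Dg)=-Dg$ and rerun the computation, dividing by $\tau_i D$, which is permissible for rational functions. With that remark made explicit, the proof is complete and is presumably the same elementary check carried out in \cite{GRTV}.
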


By this lemma, we can define the space $\DVe$ as the space of $V\<$-valued
$S_n$-invariant functions of $z_1,\dots,z_n$ of the form $\Di f$, \,$f\<\in\V$.

\begin{lem}
\label{VSfree A}
The space $\DVe$ is a free $\Czs$-module of rank $N^n$.
\end{lem}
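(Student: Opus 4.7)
My plan is to reduce the claim for $\DVe$ to the analogous claim for $\V^A$ via Lemma~\ref{S4}, and then to argue that $\V^A$ is a free $\Czs$-module of rank $N^n$ in parallel with Lemma~\ref{VSfree}.

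First I would show that the map $\Phi\colon\V^A \to \DVe$, $f\mapsto\Di f$, is a $\Czs$-linear bijection. By Lemma~\ref{S4}, a function $f\in\V$ is skew-invariant under the twisted action if and only if $\Di f$ is $S_n$-invariant, so $\Phi(\V^A)\subset\DVe$, and the inverse map $g\mapsto Dg$ sends $\DVe$ back into $\V^A$. The $\Czs$-linearity of $\Phi$ is automatic: $\Di$ is a scalar rational function and commutes with multiplication by any symmetric polynomial. Consequently $\DVe\cong\V^A$ as $\Czs$-modules, and the claim reduces to showing that $\V^A$ is a free $\Czs$-module of rank $N^n$.

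Second, I would establish the latter as the sign-representation analog of Lemma~\ref{VSfree} (which itself cites Lemma~2.10 of \cite{GRTV}). Since $\Czh$ is a free $\Czs$-module of rank $n!$ by Chevalley's theorem, the space $\V = V\<\ox_\C\Czh$ is a free $\Czs$-module of rank $n!\,N^n$. A direct check in \Ref{Sn+} shows that the twisted $S_n$-action commutes with multiplication by symmetric polynomials (the two occurrences of $f$ on the right-hand side are evaluated at $S_n$-conjugate points, on which any symmetric polynomial takes the same value). Hence the sign-isotypic summand $\V^A$ is a $\C[S_n]$-direct summand of the free $\Czs$-module $\V$, and therefore it is itself a free $\Czs$-module.

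The main obstacle is the rank computation, since the twisted action and the standard diagonal action $g\cdot(v\ox f)=(gv)\ox(gf)$ (with $g$ acting on $V$ by tensor permutation and on $\Czh$ by variable permutation) are not isomorphic as $\Czs[S_n]$-modules and one cannot transfer ranks directly. I would resolve this by localizing at the generic point of $\on{Spec}\Czs$, where the two actions become gauge-equivalent via an $R$-matrix conjugation (the gauge involves $1/(z_i-z_{i+1})$, which is invertible after inverting the discriminant). Under the standard diagonal action the sign-isotypic part is manifestly $\Czs$-free of rank $\dim V=N^n$, for instance by applying the antisymmetrizer to a $\Czs$-basis of $\V$ obtained from $\Czh\cong\C[S_n]\ox_\C\Czs$. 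Descending the generic rank back to the integral level using the already-established freeness of $\V^A$ over $\Czs$ yields the required rank $N^n$ and completes the proof.
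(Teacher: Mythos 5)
Your reduction to $\V^A$ is fine and matches the paper's setup: by definition $\DVe=\Di\>\V^A$, Lemma~\ref{S4} identifies it with the $S_n$-invariant functions of that form, the action \Ref{Sn+} commutes with multiplication by symmetric polynomials, and $\Di$ is a fixed scalar factor, so $f\mapsto\Di f$ is a $\Czs$-isomorphism $\V^A\to\DVe$. (The paper itself disposes of the lemma by citing \cite[Lemma 2.10]{GRTV}, the same reference as for Lemma~\ref{VSfree}.) The two later steps, however, have genuine gaps. First, from ``$\V^A$ is a direct summand of the free module $\V$'' you conclude that $\V^A$ is free; a direct summand of a free module is only projective, so as written the inference is invalid, and rescuing it over the polynomial ring $\Czs$ requires Quillen--Suslin or a graded/filtered argument that you do not give. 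Second, and more seriously, the rank computation rests on the claim that after inverting the discriminant the action \Ref{Sn+} is conjugated into the standard diagonal action by a ``gauge'' built from R-matrices, i.e.\ by multiplication by an invertible $\End(V)$-valued rational function $G(\zz)$. No such $G$ exists: writing \Ref{Sn+} as $s_i f=P^{(i,i+1)}f^{\,s_i}+\partial_i f$, where $f^{\,s_i}(\zzz)=f(\zzii)$ and $\partial_i f=(f-f^{\,s_i})/(z_i-z_{i+1})$, the required identity $s_i(G\>h)=G\,P^{(i,i+1)}h^{\,s_i}$, evaluated at a generic point $\zz_0$ with $h(\zz_0)$ arbitrary and $h(\zz_0^{\,s_i})=0$, forces $G(\zz_0)=0$; the obstruction is that the action \Ref{Sn+} is $\Czs$-linear but not $\C[\zzz]$-semilinear. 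So the equivalence of the two $S_n$-actions at the generic point of $\on{Spec}\Czs$ --- exactly the input you need for the rank --- is asserted rather than proved.

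Both gaps are closed by one observation, which is presumably the content of the cited \cite[Lemma 2.10]{GRTV}: in the form $s_i f=P^{(i,i+1)}f^{\,s_i}+\partial_i f$ the divided-difference term strictly lowers degree, so on the associated graded of the degree filtration the action \Ref{Sn+} becomes the standard diagonal action, and the antisymmetrizer for \Ref{Sn+} is a filtered idempotent whose symbol is the standard antisymmetrizer. Hence $\on{gr}\V^A$ coincides with the ordinary skew-invariants of $V\ox\Czh$, which by Chevalley's theorem ($\Czh\cong\C[S_n]\ox\Czs$ as $\Czs[S_n]$-modules) form a free graded $\Czs$-module of rank equal to the multiplicity of the sign representation in $V\ox\C[S_n]$, i.e.\ $N^n$. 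Lifting a homogeneous basis then yields simultaneously the freeness of $\V^A$ and its rank, with no appeal to Quillen--Suslin and no generic gauge transformation; your first step then transfers the conclusion to $\DVe$ as you intended.
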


\begin{proof}
The lemma follows from Lemma 2.10 in \cite{GRTV}.
\end{proof}

We define the degree of elements $\Di f\in\DVe$ by the formula
$\deg(\Di f)=\deg(f)-n(n-1)/2$. We consider the increasing filtration
$\dots\subset F_{k-1}\DVe\subset F_{k}\DVe \subset \dots \subset \DVe$
whose $k$-th subspace consists of elements of degree $\leq k$.

The space \,$\DVe\<$ is a filtered
$\gln$-module. We consider the $\gln$-weight decomposition
\vvn-.1>
\be
\DVe=\!\tbigoplus_{\fratop{\bla\in\Z^N_{\geq 0}}{|\bla|=n}}\!(\DVe)_\bla\,,
\vv.2>
\ee
as well as the subspaces of singular vectors $(\DVe)^\sing_\bla\!\subset
(\DVe)_\bla$. All of these are filtered free $\Czs$-modules.

\begin{lem}
\label{lem on char of VSl A}
For $\bs\la\in\Z^N_{\geq 0}$, $|\bla|=n$, we have
\vvn-.2>
\beq
\label{forMula A}
\on{ch}_{\>\DLe}(t)\,=\,t^{\,-\sum_{1\le i<j\le N}\la_i\la_j}\;
{\prod_{i=1}^N\,\frac1{(\<\>t\<\>)_{\la_i}}}\;.
\eeq
For a partition $\bla$ of $n$ with at most $N$ parts, we have
\vvn.2>
\beq
\label{forMulaSing A}
\on{ch}_{\>\DLs}(t)\,=\,t^{\,-\sum_{1\le i<j\le N}{\la_i\la_j}}\;
\frac{\prod_{1\le i<j\le N}\,(1-t^{\la_i-\la_j+j-i})}
{\prod_{i=1}^N\,(\<\>t\<\>)_{\la_i+N-i}}\;.
\eeq
\end{lem}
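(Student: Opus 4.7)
The plan is to reduce the computation to Lemma \ref{lem on char of VSl} via the intermediate identification with the skew-invariant space $\V^A$ together with the classical (untwisted) $S_n$-action on the associated graded. First I would invoke Lemma \ref{S4}: multiplication by $D$ gives a linear bijection $\DVe \to \V^A$ sending $\Di f$ to $f$, which by the very definition of the filtration on $\DVe$ shifts the degree by $+n(n-1)/2 = \deg D$, commutes with the $\gln$-action, and preserves singular vectors. This yields degree-shifted isomorphisms $\DLe \cong \V^A_\bla$ and $\DLs \cong (\V^A)^\sing_\bla$, so
\begin{equation*}
\on{ch}_{\DLe}(t) \,=\, t^{-n(n-1)/2}\,\on{ch}_{\V^A_\bla}(t), \qquad \on{ch}_{\DLs}(t) \,=\, t^{-n(n-1)/2}\,\on{ch}_{(\V^A)^\sing_\bla}(t),
\end{equation*}
and the problem reduces to computing the graded characters of $\V^A_\bla$ and $(\V^A)^\sing_\bla$.

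Second, as already observed in the proof of Lemma \ref{lem on char of VSl}, on the associated graded level the twisted $S_n$-action \Ref{Sn+} degenerates to the classical diagonal $S_n$-action on $V\otimes\C[\bs z]$ studied in \cite{MTV2, MTV3}, where $s_i$ simultaneously permutes the $i$th and $(i{+}1)$th tensor factors of $V$ via $P^{(i,i+1)}$ and the variables $z_i,z_{i+1}$. The character of $\V^A_\bla$ is then computed by a Frobenius-reciprocity argument parallel to \cite[Lemma 2.12]{MTV3}: since $V_\bla \cong \on{Ind}_H^{S_n}\bs 1$ with $H = S_{\la_1}\!\times\!\dots\!\times\!S_{\la_N}$, the skew-invariants in $V_\bla\otimes\C[\bs z]$ are identified with the sign-isotypic $H$-component of $\C[\bs z]$, namely the product of Vandermondes in the $N$ blocks of variables times $\C[\bs z]^H$. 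The resulting character is $t^{\sum_i \la_i(\la_i-1)/2}\prod_{i=1}^N 1/(t)_{\la_i}$; combining with the $t^{-n(n-1)/2}$ shift and the identity $\sum_i \la_i(\la_i-1)/2 - n(n-1)/2 = -\sum_{i<j}\la_i\la_j$ (which uses $\sum_i\la_i = n$) yields formula \Ref{forMula A}.

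Finally, for \Ref{forMulaSing A} I would apply the same strategy with $V_\bla$ replaced by $V^\sing_\bla$. Under the classical diagonal action, Schur--Weyl duality identifies $V^\sing_\bla$ with the Specht module $S^\bla$ as an $S_n$-module, and sign-twisted Frobenius reciprocity converts the graded $S^\bla$-multiplicity in $\C[\bs z]$ (whose expression produces the character of $(\V^S)^\sing_\bla$ in \cite[Formula 5.3 and Lemma 2.2]{MTV2}) into the graded multiplicity of $S^{\bla'}$; together with the $t^{-n(n-1)/2}$ shift this yields \Ref{forMulaSing A}. I expect the main obstacle to be the careful bookkeeping of powers of $t$ in this last step, since the passage from $S^\bla$ to $S^{\bla'}$ transforms the fake-degree polynomial in a nontrivial way and does not simply combine with the $t^{-n(n-1)/2}$ shift into a cancelling exponent as it does in the permutation-module case; however, this amounts to standard representation theory and introduces no new ingredients beyond \cite{MTV2, MTV3}.
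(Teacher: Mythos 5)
Your proposal is correct, but it follows a more self-contained route than the paper, whose entire proof consists of identifying the graded components $F_k\DLe/F_{k-1}\DLe$ and $F_k\DLs/F_{k-1}\DLs$ with the graded components studied in [RSTV] and quoting the character formulas proved there (Theorem 3.4 and Formula 3.4 of [RSTV], together with [MTV3, Lemma 2.12]). You instead rederive everything: the bijection $\Di f\mapsto f$ from $\DLe$ onto $\V^A_\bla$, which by the very definition of the filtration on $\DVe$ shifts degrees by $n(n-1)/2$; the degeneration of the twisted action \Ref{Sn+} to the classical diagonal $S_n$-action on the associated graded (where taking the sign-isotypic component commutes with $\on{gr}$ because isotypic projectors for a finite group in characteristic zero are exact -- worth saying explicitly); and then Frobenius reciprocity for the permutation module, respectively Schur--Weyl plus the fake-degree formula for the Specht module, to compute the sign-multiplicities. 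This buys a proof readable without [RSTV], at the cost of redoing computations that reference already contains. One small comment on your final caveat: the bookkeeping you worry about is actually no worse than in the permutation-module case, since $n(\bla')=\sum_i\binom{\la_i}{2}$ and hence $n(\bla')-\binom{n}{2}=-\sum_{1\le i<j\le N}\la_i\la_j$, while the hook-length product for $\bla'$ equals that for $\bla$ and gives exactly the factor $\prod_{1\le i<j\le N}(1-t^{\la_i-\la_j+j-i})\big/\prod_{i=1}^N(\<\>t\<\>)_{\la_i+N-i}$; so the exponent combines with the $t^{-n(n-1)/2}$ shift just as before, and \Ref{forMulaSing A} follows with no further adjustment.
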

\begin{proof}
The graded components $F_k\DLe/F_{k-1}\DLe$ and $F_k\DLs/F_{k-1}\DLs$
are respectively naturally isomorphic to the graded components considered
in \cite{RSTV} and denoted there by $((\frac 1D\V^-)_\bla)_k$ and
$((\Di\V^-)^{sing}_\bla)_k$. Now formula \Ref{forMula A} follows from
Theorem \cite[Theorem 3.4]{RSTV} and \cite[Lemma 2.12]{MTV3}.
Formula \Ref{forMulaSing A} follows from \cite[Formula 3.4]{RSTV}.
\end{proof}

\subsection{Space $\DVe$ as a Yangian module}

By Lemmas \ref{lem filt} and \ref{pm & Yan 1}, $\DVe$ is a filtered
$\Yn$-module.

For \,$\bb=(b_1\lc b_n)\in\C^n$, we define \,$\aa=(a_1\lc a_n)$ by the formula
$a_s=\si_s(b_1\lc b_n)$, cf.~\Ref{ab}.

\begin{prop}
\label{VAVb}
Assume that the numbers \,$b_1\lc b_n$ are such that \,$b_i\ne b_j+1$ for all
$i\ne j$. Then the $\Yn$-module $\DVe\<\</I_\aa\DVe$ is isomorphic to
$V(\bb)=\C^N(b_1)\lox\C^N(b_n)$, the tensor product of evaluation
$\Yn$-modules.
\end{prop}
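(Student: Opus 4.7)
The plan is to mimic the proof of Proposition~\ref{VSVb}: I construct a $\Yn$-equivariant evaluation map at $\bb$, check that it factors through the quotient by $I_\aa\DVe$, and then conclude by a dimension count once surjectivity is established.

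Under the hypothesis $b_i\ne b_j+1$ for all $i\ne j$, the scalar $D(\bb)=\prod_{1\le i<j\le n}(b_j-b_i+1)$ is nonzero, so the evaluation map $\phi:\DVe\to V$, $g\mapsto g(\bb)$, is well-defined. By Lemma~\ref{pm & Yan 1}, the $\Yn$-action commutes with multiplication by elements of $\Czh$, so exactly as in the proof of Proposition~\ref{VSVb}, $\phi$ is a homomorphism of $\Yn$-modules: the specialization $\zb\mapsto\bb$ in $T_{\ij}(u)\mapsto L_{\ij}(u)\prod_{a=1}^n(u-z_a)^{-1}$ yields the tensor-product action on $V(\bb)$.

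Since $\si_s(\bb)=a_s$ for $s=1,\dots,n$, the map $\phi$ vanishes on $I_\aa\DVe$ and descends to a homomorphism $\psi:\DVe/I_\aa\DVe\to V(\bb)$. By Proposition~\ref{irr} and the hypothesis, $V(\bb)$ is irreducible, so it suffices to prove $\psi\ne 0$. The key observation is that $\voxn\in\DVe$: using $P^{(\ii+1)}\voxn=\voxn$ and the explicit formula~\Ref{Sn+}, a direct computation gives $s_i(D\>\voxn)=-D\>\voxn$ for each $i=1,\dots,n-1$, so $D\>\voxn\in\V^A$ and therefore $\voxn=\Di\cdot(D\>\voxn)\in\DVe$. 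Then $\psi([\voxn])=\voxn\ne 0$ in $V(\bb)$, so $\psi$ is a nonzero $\Yn$-homomorphism into an irreducible module, hence surjective. Since $\dim(\DVe/I_\aa\DVe)=N^n=\dim V(\bb)$ by Lemma~\ref{VSfree A}, $\psi$ is an isomorphism.

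The main subtlety I anticipate is the verification that $\voxn\in\DVe$: it relies on the specific form of the twisted $S_n$-action~\Ref{Sn+} to show that the polynomial $D\>\voxn$ is skew-invariant, an outcome that would not hold for the naive (untwisted) $S_n$-action, since $D$ itself is not skew-symmetric under coordinate permutation. Everything else is a direct adaptation of the argument used for Proposition~\ref{VSVb}.
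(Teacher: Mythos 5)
Your proof is correct and follows essentially the same route as the paper's: evaluation at $\zb=\bb$ (well defined since $D(\bb)\ne0$), factoring through $I_\aa\DVe$, irreducibility of $V(\bb)$ from Proposition \ref{irr}, and the dimension count $N^n$ from Lemma \ref{VSfree A}. Your explicit check that $s_i(D\,\voxn)=-D\,\voxn$, hence $\voxn\in\DVe$, correctly verifies a point the paper uses only implicitly in writing $\psi(\voxn)=\voxn$.
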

\begin{proof}
Consider the map \,$\phi^A\!:\DVe\!\to V(\bb)$ that sends every element of
\,$\DVe$ to its value at the point \,$\zb=\bb$\>. This map is a homomorphism
of $\Yn$-modules and factors through the canonical projection
\,$\thi^A\!:\DVe\!\to\DVe\<\</I_\aa\DVe$. Since \,$\thi^A\<$ is also
a homomorphism of $\Yn$-modules, this defines a homomorphism of $\Yn$-modules
\,$\psi^A\!:\DVe\<\</I_\aa\DVe\!\to V(\bb)$.

\vsk.2>
Under the assumption that \,$b_i\ne b_j+1$ for \,$i\ne j$\>, the $\Yn$-module
\vvn.1>
\,$V(\bb)$ is irreducible, see Proposition \ref{irr}. Since $\psi(\voxn)=\voxn$
\,and
\vvn.1>
\;$\dim\<\>\DVe\<\</I_\aa\DVe\<=N^n\<=\dim\<\>V(\bb)$, \,the map \,$\psi$
\,is an isomorphism of $\Yn$-modules.
\end{proof}




\medskip
The Bethe algebra $\B^\qq$ preserves the subspaces $\DLe\subset\DVe$. The image
of an element $B^\qq_{k,s}\in\B^\qq$ in $\End(\DLe)$ will be denoted by
$\tilde{B}^{\qq,\bla}_{k,s}$.

\medskip
The Bethe algebra $\B^{\qq=\bs 1}$ preserves the subspaces $\DLs\subset\DVe$. Recall the elements
$C_{k,s}\in \B^{\qq=\bs 1}$ introduced by formula \Ref{formula for Se}. The image
of an element $C_{k,s}\in\B^\qq$ in $\End(\DLs)$ will be denoted by
$\tilde{C}^{\bla}_{k,s}$.

\begin{thm}[{\cite[Theorem 3.7]{MTV2}}]
\label{thm Q=1 A}
The elements $\tilde{C}^{\bla}_{1,1},\dots,\tilde{C}^{\bla}_{N,N}$ are scalar operators, and for a variable $x$, we have
\be
\sum_{k=0}^N\,(-1)^k\,\tilde{C}^{\bla}_{k,k}\prod_{j=0}^{N-k-1}\!(x-j)\,=\,
\prod_{s=1}^N\,(x-\la_s-N+s)\,.
\vv-1.3>
\ee
\qed
\end{thm}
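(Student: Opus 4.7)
The plan is to view Theorem \ref{thm Q=1 A} as a direct instance of Theorem \ref{thm Q=1}(iii), rather than a genuinely new assertion. Both statements concern the image of the same universal element $C_{k,k}\in\Bo\subset\Yn$: in one case on the singular weight $\bla$ subspace of $\V^S$, in the other on the singular weight $\bla$ subspace $\DLs$ of $\DVe$. The formula on the right-hand side depends only on $\bla$, $N$, and $k$, and not on the ambient $\Yn$-module, so it should transfer verbatim.

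First, I would recall from the filtration lemma in Section \ref{univ diff oper} that $C_{k,s}\in F_s\Yn$, so the leading part of $C_{k,k}$ sits in Yangian-degree exactly $k$. Combined with the commutation with $U(\glN)$ from Theorem \ref{BY} and the centrality of $\qdet T(u)$, this constrains $C_{k,k}$ modulo lower filtration terms to be expressible via the $\glN$-subalgebra in a way that acts as a scalar on any singular vector of weight $\bla$. Following \cite[Theorem 3.7]{MTV2}, the explicit scalar is obtained by evaluating on a highest weight vector of weight $\bla$ and matching the leading $u^{-k}$-coefficient of the universal difference operator against the polynomial $\prod_{s=1}^N(x-\la_s-N+s)$, which yields the formula in exactly the form already verified for $\Cl_{k,k}$.

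Second, I would observe that by construction $\DLs$ consists of vectors of $\gln$-weight $\bla$ in the $\Yn$-module $\DVe$ that are annihilated by $e_{\ij}$ for $i<j$. Thus the hypotheses under which the scalar action of $C_{k,k}$ was derived apply equally to $\DLs$, and the conclusion follows: $\tilde C^\bla_{1,1},\dots,\tilde C^\bla_{N,N}$ are scalar operators on $\DLs$ and satisfy the same polynomial identity as $\Cl_{1,1},\dots,\Cl_{N,N}$.

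The main obstacle — identifying the leading symbol of $C_{k,k}$ in the Yangian filtration precisely enough to compute a representation-independent scalar on weight $\bla$ singular vectors — is not genuinely new here; it was resolved in \cite[Theorem 3.7]{MTV2}. Given that input, the present theorem is immediate, and the proof reduces to pointing out that $\DLs$ is a space of singular vectors of weight $\bla$ and invoking the cited result.
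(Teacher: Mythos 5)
Your proposal matches the paper's treatment: the paper offers no independent argument for Theorem \ref{thm Q=1 A}, stating it with a \qed as a direct citation of \cite[Theorem 3.7]{MTV2} (the same source as Theorem \ref{thm Q=1}), which is exactly your reduction — $\DLs$ is again a space of $\gln$-singular vectors of weight $\bla$ in a $\Yn$-module, so the cited scalar formula for $C_{k,k}$ applies unchanged. Your additional sketch of how the scalar is extracted in \cite{MTV2} (filtration, commutation with $U(\glN)$, evaluation of the leading coefficient) is consistent with that reference but plays no separate role here.
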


\subsection{Third main result}

By formula \Ref{forMula A}, the space
$F_k\DLe$ is one-dimensional if $k=-\sum_{1\le i<j\le N}\la_i\la_j$.
We fix a nonzero element $v^A_\bla$ of that space. If $\bla$ is such that
$\la_1\geq\dots\geq\la_N$, then the element $v^A_\bla$ lies in the one-dimensional
space $F_k\DLs$, see \Ref{forMulaSing A}.

The properties of the element $v^A_\bla$ were discussed in \cite {RTVZ}. In particular, see there a
geometric description of $v^A_\bla$ in terms of orbital varieties. The element $v^A_\bla$ was denoted
in \cite{GRTV} by $v^=_\bla$, see \cite[Formula 2.27]{GRTV}.

\begin{thm}
\label{3 thm}
Assume that $\qq\in(\C^\times)^N$ has distinct coordinates.
Then
\begin{enumerate}
\item[(i)]
The map $\tilde \mu^\qq_\bla\!:\Fql_{k,s} \mapsto \tilde{B}^{\qq,\bla}_{k,s}$,
\,$k=1\lc N$, \,$s>0$, extends uniquely to an isomorphism
$\tilde \mu^\qq_\bla\!: \O^\qq_\bla\<\to\Bcq\bigl(\DLe\bigr)$
of filtered algebras. The isomorphism $\tilde\mu^\qq_\bla$ becomes an isomorphism of
the $\C[\si_1\lc\si_n]$-module \,$\O^\qq_\bla$ and the
$\Czs$-module \,$\Bcq\bigl(\DLe\bigr)$ if we identify the algebras
$\C[\si_1\lc\si_n]$ and $\Czs$ by the map
$\si_s\mapsto \si_s(\bs z)$, \,$s=1\lc n$.
\vsk.2>
\item[(ii)]
The map \;$\tilde\nu^\qq_\bla:\O^\qq_\bla \to\>\DLe$\,,
\,$f\mapsto\tilde\mu^\qq_\bla(f)\,v^A_\bla$\>,
is an isomorphism of filtered vector spaces identifying the
\;$\Bcq\bigl(\DLe\bigr)$-module \;$\DLe$ and the regular
representation of \,$\O^\qq_\bla$.
\end{enumerate}
\end{thm}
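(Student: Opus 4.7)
The plan is to mirror the proof of Theorem \ref{1 thm} essentially verbatim, with $\V^S$ replaced by $\DVe$ and $(\V^S)_\bla$ by $\DLe$ throughout. The key point is that the ingredients are already in place. Proposition \ref{VAVb} plays the role of Proposition \ref{VSVb}: it identifies $\DVe\</I_\aa\DVe$ with $V(\bb)$ as a $\Yn$-module for generic $\bb$, so any polynomial relation among the transfer-matrix coefficients in $\Bcq(\DLe)$ specializes to a relation in $\Bcq(V(\bb)_\bla)$ and conversely holds identically if it holds for generic $\bb$. All the Bethe ansatz results we need (Theorems \ref{thm on Bethe ansatz}, \ref{thm on fund operator}, \ref{thm on completeness}) are formulated for the abstract module $V(\bb)$ and therefore apply without modification.

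For part (i), suppose $R(F^{\qq,\bla}_{k,s})=0$ in $\Oql$. Viewing $R(\tilde B^{\qq,\bla}_{k,s})$, after specialization at generic $\zb=\bb$, as an element of $\End(V(\bb)_\bla)$, Theorem \ref{thm on fund operator} forces it to have eigenvalue zero on every Bethe eigenvector, and by completeness (Theorem \ref{thm on completeness}) such eigenvectors span $V(\bb)_\bla$ for generic $\bb$. Hence $R(\tilde B^{\qq,\bla}_{k,s})=0$ identically on $\DLe$, so $\tilde\mu^\qq_\bla$ is well-defined. The same Bethe-ansatz argument run in the other direction gives injectivity: a nonzero $R(F^{\qq,\bla}_{k,s})$ produces a nonzero Bethe eigenvalue on some Bethe vector. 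Surjectivity is immediate because the $\tilde B^{\qq,\bla}_{k,s}$ generate $\Bcq(\DLe)$, and the filtered property follows from $\deg F^{\qq,\bla}_{k,s}=\deg B^\qq_{k,s}$. Finally, comparing \eqref{BN} with \eqref{FN} gives the compatibility of the $\C[\si_1,\ldots,\si_n]$- and $\Czs$-module structures, exactly as in Section \ref{proof thm main 1}.

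For part (ii), the kernel of $\tilde\nu^\qq_\bla: f\mapsto\tilde\mu^\qq_\bla(f)\,v^A_\bla$ is an ideal of $\Oql$. Its intersection with $\C[\si_1,\ldots,\si_n]$ must be trivial: quotienting by a maximal ideal corresponding to a generic point $\aa$ specializes the picture to $V(\bb)_\bla$, in which the image of $v^A_\bla$ is a cyclic vector for $\Bcq\bigl(V(\bb)_\bla\bigr)$ (by Corollary \ref{1 prime cor}-type considerations, since on a generic fiber the Bethe algebra is semisimple with simple spectrum, and any nonzero vector is cyclic). Hence the kernel is zero. To conclude that $\tilde\nu^\qq_\bla$ is a bijection, I would use the character equality, visible from Lemma \ref{lem on char of VSl A}, of the form $t^{\sum_{1\le i<j\le N}\la_i\la_j}\,\on{ch}_{\>\DLe}(t)=\on{ch}_{\>\Oql}(t)$, which both delivers the linear isomorphism and records the uniform filtration shift by $-\sum_{i<j}\la_i\la_j$ that $\tilde\nu^\qq_\bla$ induces.

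The main obstacle I anticipate is the cyclicity step in part (ii). Unlike in the $\V^S$ case, where $\voxn$ is manifestly cyclic for the whole Yangian by Lemma \ref{V+v1 1}, here the distinguished element $v^A_\bla$ sits at the bottom of the filtration of $\DLe$ and its image in the evaluation module is an orbital-variety type vector, described geometrically in \cite{RTVZ}. The cleanest route is probably to avoid proving cyclicity of $v^A_\bla$ directly and instead to argue that on a fiber where $\Bcq\bigl(V(\bb)_\bla\bigr)$ has simple spectrum (guaranteed for generic real $\bb$ by Corollary \ref{1 prime cor}), every nonzero vector is automatically cyclic; pushing this back via the character equality then forces $\tilde\nu^\qq_\bla$ to be injective globally. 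Making this specialization-and-lift argument precise is the technical heart of the proof.
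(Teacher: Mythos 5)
Your overall route is the paper's own: the paper proves Theorem \ref{3 thm} simply by observing that its proof coincides word for word with that of Theorem \ref{1 thm}, and your part (i) (well-definedness and injectivity via Theorems \ref{thm on fund operator} and \ref{thm on completeness} at generic $\zb=\bb$, surjectivity since the $\tilde{B}^{\qq,\bla}_{k,s}$ generate, module compatibility from \Ref{BN} and \Ref{FN}, with Proposition \ref{VAVb} available in place of Proposition \ref{VSVb}) together with the character-counting conclusion of part (ii) reproduce exactly that argument.

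The one genuine flaw is in the step you yourself single out as the technical heart. Your justification that $\ker\tilde\nu^\qq_\bla\cap\>\C[\si_1\lc\si_n]=0$ goes through the claim that on a fiber where $\Bcq\bigl(V(\bb)_\bla\bigr)$ is semisimple with simple spectrum ``any nonzero vector is cyclic''. That claim is false: for a commutative semisimple algebra with simple spectrum acting on a space of dimension greater than one, a vector is cyclic if and only if all of its components along the joint eigenlines are nonzero, so for instance a joint eigenvector is never cyclic; nonvanishing of the specialized $v^A_\bla$ does not give cyclicity. Fortunately, no cyclicity is needed at this point. If $p\in\C[\si_1\lc\si_n]$ lies in the kernel, then by the module identification in part (i) the operator $\tilde\mu^\qq_\bla(p)$ acts on $\DLe$ as multiplication by the symmetric polynomial $p(\si_1(\zb)\lc\si_n(\zb))$; since $\DLe$ is a free, hence torsion-free, $\Czs$-module (Lemma \ref{VSfree A}) and $v^A_\bla\ne0$, this forces $p=0$. (Equivalently: specialize at a generic point $\aa$ where $p(\aa)\ne0$ and where $v^A_\bla$ takes a nonzero value; only nonvanishing of the specialized vector is used, not that it generate the fiber as a module.) With that correction the remainder of your argument is exactly the paper's: the kernel is an ideal of the domain $\Oql$, which is finite over $\C[\si_1\lc\si_n]$ via the discrete Wronski map, hence the kernel is zero; then the character identity $t^{\sum_{1\le i<j\le N}\la_i\la_j}\,\ch_{\>\DLe}(t)=\ch_{\>\Oql}(t)$ from Lemma \ref{lem on char of VSl A} gives bijectivity of $\tilde\nu^\qq_\bla$ and records the filtration shift, and cyclicity of $v^A_\bla$ comes out as a consequence rather than an input.
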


\begin{proof}
The proof of Theorem \ref{3 thm} word by word coincides with that of
Theorem \ref{1 thm}.
\end{proof}

\begin{rem}
Theorem \ref{3 thm} was announced in \cite[Theorem 6.4]{GRTV}, cf.~the remark after Theorem \ref{1 thm}.
As explained in \cite{GRTV}, Theorem 6.4 in \cite{GRTV} implies Theorem 6.5 in \cite{GRTV}.

\end{rem}

\begin{cor}
\label{cor on iso}
The $\B^\qq$-modules $(\V^S)_\bla$ and $\DLe$ are isomorphic.
\end{cor}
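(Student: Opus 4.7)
The plan is to obtain the isomorphism by composing the two identifications with the regular representation of $\O^\qq_\bla$ provided by Theorem \ref{1 thm} and Theorem \ref{3 thm}, and then to verify that this composition intertwines the $\B^\qq$-action.

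More precisely, first I would invoke Theorem \ref{1 thm}(ii) to obtain the filtered linear isomorphism $\nu^\qq_\bla:\O^\qq_\bla\to(\V^S)_\bla$, $f\mapsto\mu^\qq_\bla(f)\>v_\bla$, identifying $(\V^S)_\bla$ with the regular $\O^\qq_\bla$-module via the algebra isomorphism $\mu^\qq_\bla:\O^\qq_\bla\to\Bcq\bigl((\V^S)_\bla\bigr)$. Next I would invoke Theorem \ref{3 thm}(ii) to obtain the analogous isomorphism $\tilde\nu^\qq_\bla:\O^\qq_\bla\to\DLe$, $f\mapsto\tilde\mu^\qq_\bla(f)\>v^A_\bla$, together with the algebra isomorphism $\tilde\mu^\qq_\bla:\O^\qq_\bla\to\Bcq(\DLe)$. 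The candidate isomorphism is
\be
\Phi\,=\,\tilde\nu^\qq_\bla\circ(\nu^\qq_\bla)^{-1}:\,(\V^S)_\bla\,\to\,\DLe\,,
\ee
which is a filtered linear isomorphism by construction.

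The key step is to check that $\Phi$ intertwines the $\B^\qq$-action. By the definitions in Theorems \ref{1 thm} and \ref{3 thm}, both $\mu^\qq_\bla$ and $\tilde\mu^\qq_\bla$ send the generator $F^{\qq,\bla}_{k,s}\in\O^\qq_\bla$ to the image of the same element $B^\qq_{k,s}\in\B^\qq$ in the respective quotient. Consequently the composition $\tilde\mu^\qq_\bla\circ(\mu^\qq_\bla)^{-1}:\Bcq\bigl((\V^S)_\bla\bigr)\to\Bcq(\DLe)$ is compatible with the natural surjections from $\B^\qq$. Given $v=\nu^\qq_\bla(g)\in(\V^S)_\bla$ and $B\in\B^\qq$ whose image in $\Bcq\bigl((\V^S)_\bla\bigr)$ is $\mu^\qq_\bla(h)$, commutativity of $\Bcq\bigl((\V^S)_\bla\bigr)$ gives
\be
B\cdot v\,=\,\mu^\qq_\bla(h)\>\mu^\qq_\bla(g)\>v_\bla\,=\,\mu^\qq_\bla(hg)\>v_\bla\,=\,\nu^\qq_\bla(hg)\,,
\ee
so $\Phi(B\cdot v)=\tilde\nu^\qq_\bla(hg)=\tilde\mu^\qq_\bla(h)\>\tilde\mu^\qq_\bla(g)\>v^A_\bla=B\cdot\tilde\nu^\qq_\bla(g)=B\cdot\Phi(v)$, establishing the desired intertwining property.

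The only point that might look delicate is the compatibility of the two quotient algebras with the $\B^\qq$-action, but this is automatic from the fact that $\mu^\qq_\bla$ and $\tilde\mu^\qq_\bla$ are defined by the same rule $F^{\qq,\bla}_{k,s}\mapsto\,[B^\qq_{k,s}]$ on generators. Therefore no additional computation beyond the two main theorems is required, and the corollary follows at once.
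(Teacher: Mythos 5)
Your proposal is correct and follows essentially the same route as the paper, which simply deduces the corollary from Theorems \ref{1 thm} and \ref{3 thm}; you merely spell out the composition $\tilde\nu^\qq_\bla\circ(\nu^\qq_\bla)^{-1}$ and the intertwining check, which is exactly the intended argument since both $\mu^\qq_\bla$ and $\tilde\mu^\qq_\bla$ send the generators $F^{\qq,\bla}_{k,s}$ to the images of the same elements $B^\qq_{k,s}$.
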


\begin{proof} The corollary follows from Theorems \ref{1 thm} and \ref{3 thm}.
\end{proof}

\subsection{Fourth main result}

\begin{thm}
\label{4 thm}
Let $\bla$ be a partition on $n$ with at most $N$ parts. Then

\begin{enumerate}
\item[(i)]
The map \,$\tilde \mu_\bla\!: G_{k,s} \mapsto \tilde C^\bla_{k,s}$, \,$k=1\lc N$, \,$s>0$,
extends uniquely to an isomorphism
\,$\tilde\mu_\bla\!:\O_\bla\<\to\Bo\bigl(\DLs\bigr)$ of filtered
algebras. The isomorphism \,$\tilde \mu_\bla$ becomes an isomorphism of the
\,$\C[\si_1\lc\si_n]$-module \,$\O_\bla$ and the \,$\Czs$-module
$\Bo\bigl(\DLs\bigr)$ if we identify the algebras
\,$\C[\si_1\lc\si_n]$ and \,$\Czs$ by the map
\;$\si_s\mapsto \si_s(\bs z)$, \,$s=1\lc n$.

\vsk.2>
\item[(ii)]
The map \;$\tilde\nu_\bla: \O_\bla \to\>\DLs$\,,
\,$f\mapsto\tilde\mu_\bla(f)\,v_\bla^A$, is an isomorphism of filtered vector
spaces decreasing the index of filtration
by \,$\sum_{1\le i<j\le N}\la_i\la_j$. The isomorphism
\,$\tilde\nu_\bla$ identifies the \;$\Bo\bigl(\DLs\bigr)$-module \;$\DLs$
and the regular representation of the algebra \,$\O_\bla$.
\end{enumerate}
\end{thm}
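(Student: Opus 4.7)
My plan is to mirror the proof of Theorem \ref{2 thm} nearly verbatim, with $(\V^S)^\sing_\bla$ replaced by $\DLs$ throughout. The key preliminary I would establish is the singular-weight analog of Proposition \ref{VAVb}: namely, for generic $\bb$ with $a_s = \si_s(\bb)$, the canonical map $\DLs/I_\aa\DLs \to V(\bb)^\sing_\bla$ is a $\Bo$-equivariant isomorphism. This is immediate from Proposition \ref{VAVb} because $\Bo$ commutes with $U(\glN)$ by Theorem \ref{BY}, so projection onto the singular subspace of weight $\bla$ commutes with the Bethe action. With this reduction, Theorems \ref{thm on fund operator q=1} and \ref{thm on compl q=1} transfer directly: on each generic specialization the Bethe vectors form a basis of $V(\bb)^\sing_\bla$ simultaneously diagonalizing the images of $C_{k,s}$, with eigenvalues equal to the values of $G_{k,s}$ at the corresponding points of the Wronski fiber.

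For part (i), I would first check that $\tilde\mu_\bla$ is well-defined: if $R(G_{k,s})=0$ in $\Ol$, then viewing $R(\tilde C^\bla_{k,s})$ as a $\Czs$-valued operator on $\DLs$, its image in each generic specialization $V(\bb)^\sing_\bla$ vanishes by the eigenvalue correspondence above, hence it vanishes identically by freeness of $\DLs$ over $\Czs$ (Lemma \ref{VSfree A}). The reverse implication gives injectivity of $\tilde\mu_\bla$; surjectivity is immediate from the definition of $\Bo(\DLs)$; compatibility with the $\Czs$-module structures follows by comparing \Ref{BN} with \Ref{FN}; and filtration compatibility follows from $\deg G_{k,s} = \deg C_{k,s} = s-1$ together with Lemma \ref{lem filt}.

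For part (ii), the kernel of $\tilde\nu_\bla$ is, via part (i), an ideal in $\Ol$ whose intersection with the image of $\Czs$ is zero (again by freeness of $\DLs$ over $\Czs$), so the ideal itself is zero. The filtration shift is $\deg v^A_\bla = -\sum_{1\le i<j\le N}\la_i\la_j$, and the character identity
\[
\ch_{\DLs}(t) \,=\, t^{-\sum_{1\le i<j\le N}\la_i\la_j}\,\ch_{\Ol}(t),
\]
obtained by combining \Ref{forMulaSing A} with the character formula for $\Ol$ given in Section \ref{Spaces of polynomials}, upgrades the injection to a filtered linear isomorphism; the identification of module structures is then immediate from the definition of $\tilde\nu_\bla$. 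The only step requiring attention beyond the template of Section \ref{proof thm main 2} is confirming the existence of a nonzero $v^A_\bla$ in the one-dimensional bottom piece of $\DLs$ with the claimed degree; this uses the partition hypothesis $\la_1\ge\cdots\ge\la_N$, which places the extremal line of $\DLe$ inside $\DLs$, as noted just before the statement of the theorem.
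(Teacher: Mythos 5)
Your proposal is correct and follows essentially the same route as the paper, whose entire proof of Theorem \ref{4 thm} is the statement that it coincides word by word with the proof of Theorem \ref{2 thm}, i.e.\ exactly the transcription you carry out with $(\V^S)^\sing_\bla$ replaced by $\DLs$, relying on Theorems \ref{thm on fund operator q=1} and \ref{thm on compl q=1}, freeness over $\Czs$, and the character identity \Ref{forMulaSing A}. The extra details you supply (the singular-weight analogue of Proposition \ref{VAVb} via Theorem \ref{BY}, and the degree of $v^A_\bla$ from the partition hypothesis) are faithful fillings-in of what the paper leaves implicit.
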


\begin{proof}
The proof of Theorem \ref{4 thm} word by word coincides with that of
Theorem \ref{2 thm}.
\end{proof}

\begin{cor}
\label{cor on iso2}
The $\B^{\qq=\bs 1}$-modules $(\V^S)^\sing_\bla$ and $\DLs$ are isomorphic.
\end{cor}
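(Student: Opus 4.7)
The plan is to mimic the proof of Corollary \ref{cor on iso} verbatim, combining Theorems \ref{2 thm} and \ref{4 thm} exactly as that corollary combines Theorems \ref{1 thm} and \ref{3 thm}. The key observation is that both theorems identify their respective modules with the regular representation of the same algebra $\O_\bla$, so an isomorphism of $\Bo$-modules drops out formally.

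First, part (i) of Theorem \ref{2 thm} provides an isomorphism of algebras $\mu_\bla:\O_\bla\to\Bo\bigl((\V^S)^\sing_\bla\bigr)$ sending $G_{k,s}$ to $C^\bla_{k,s}$, and part (i) of Theorem \ref{4 thm} provides an isomorphism of algebras $\tilde\mu_\bla:\O_\bla\to\Bo\bigl(\DLs\bigr)$ sending the same generator $G_{k,s}$ to $\tilde C^\bla_{k,s}$. The composition $\tilde\mu_\bla\circ\mu_\bla^{-1}$ is therefore an isomorphism of the two images of $\Bo$ in the respective endomorphism algebras, compatible with the action of $\Bo$ on both modules, since both actions factor through $\O_\bla$ via the common generators $G_{k,s}$.

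Next, parts (ii) of the two theorems supply vector space isomorphisms $\nu_\bla:\O_\bla\to(\V^S)^\sing_\bla$ and $\tilde\nu_\bla:\O_\bla\to\DLs$, each identifying the target with the regular $\O_\bla$-module. I would then define the desired isomorphism as $\tilde\nu_\bla\circ\nu_\bla^{-1}:(\V^S)^\sing_\bla\to\DLs$. To check it intertwines the $\Bo$-actions, take $x\in\Bo$, and let $g=\mu_\bla^{-1}(\bar x)$ where $\bar x$ is the image of $x$ in $\Bo\bigl((\V^S)^\sing_\bla\bigr)$; then for $f\in\O_\bla$, the action of $x$ on $\nu_\bla(f)=\mu_\bla(f)v_\bla^\sing$ equals $\nu_\bla(gf)$, and analogously on the $\DLs$ side it becomes $\tilde\nu_\bla(gf)$. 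This is exactly compatibility with $\tilde\nu_\bla\circ\nu_\bla^{-1}$.

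There is essentially no obstacle here: the entire argument is a formal diagram chase through the two main theorems, which is why the paper closes the corresponding statement Corollary \ref{cor on iso} with a one-line proof. The only mild subtlety worth flagging is that the isomorphism produced is only an isomorphism of $\Bo$-modules, not of filtered $\Bo$-modules, because $\nu_\bla$ shifts filtration degree by $\kmin=\sum_{i=1}^N(i-1)\la_i$ while $\tilde\nu_\bla$ shifts it by $-\sum_{1\le i<j\le N}\la_i\la_j$; but the statement of the corollary does not claim anything about filtrations, so this is harmless.
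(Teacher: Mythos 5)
Your argument is exactly the paper's: Corollary \ref{cor on iso2} is proved there in one line by combining Theorems \ref{2 thm} and \ref{4 thm}, and your composition $\tilde\nu_\bla\circ\nu_\bla^{-1}$ with the intertwining check via the common algebra $\O_\bla$ is just the explicit form of that argument. The remark about the filtration shifts is a correct and harmless observation; nothing further is needed.
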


\begin{proof} The corollary follows from Theorems \ref{2 thm} and \ref{4 thm}.
\end{proof}

\end{document}